\newtheorem{prop}{Proposition}[section]
\newtheorem{rem}[prop]{Remark}
\newtheorem{cor}[prop]{Corollary}
\numberwithin{equation}{section}
\newcommand{\beq}{\begin{eqnarray}}
\newcommand{\beqq}{\begin{eqnarray*}}
\newcommand{\eeq}{\end{eqnarray}}
\newcommand{\eeqq}{\end{eqnarray*}}
\newtheorem{theorem}{Theorem}[section]
\newtheorem{lemma}{Lemma}[section]
\newtheorem{proposition}[theorem]{Proposition}
\definecolor{link-color}{rgb}{0.15,0.4,0.15}
\DeclareMathOperator{\rRe}{Re}
\renewcommand{\Re}{\rRe}
\newenvironment{eqnarr}{\begin{IEEEeqnarray}{rCl}}{\end{IEEEeqnarray}\ignorespacesafterend}
\renewcommand{\eqref}[1]{\hyperref[#1]{(\ref*{#1})}}
    \def\beq{\begin{eqnarr}}%arw
    \def\eeq{\end{eqnarr}}%arw
    \def\beqq{\begin{eqnarray*}} %arw
    \def\eeqq{\end{eqnarray*}} %arw
        \def\d{{\rm d}}
    \def\d{{\textnormal d}}
\newcommand*{\pref}[1]{\hyperref[#1]{(\ref*{#1})}}
\newcommand*{\refpref}[2]{\hyperref[#2]{\ref*{#1}(\ref*{#2})}}
\numberwithin{equation}{section}
\theoremstyle{plain}
\begin{document}

\begin{frontmatter}
\title{Deep factorisation of the stable  process III: Radial \\ excursion theory and the point of closest reach}
%continuous-state branching processes}

\runtitle{Deep factorisation of the stable process III}
%\thankstext{T1}{Footnote to the title with the ``thankstext'' command.}

\begin{aug}
%\author{\fnms{Andreas E. Kyprianou}\thanksref{t1}\ead[label=e1]{a.kyprianou@bath.ac.uk}}
%\and
%\author{\fnms{Stavros M. Vakeroudis}\ead[label=e2]{stavros.vakeroudis@gmail.com}}
%

\author{\fnms Andreas E. Kyprianou\thanksref{t1}\ead[label=e1]{a.kyprianou@bath.ac.uk, ws250@bath.ac.uk}},
\author{\fnms Victor Rivero\thanksref{t2}\ead[label=e2]{rivero@cimat.mx}}
\and
\author{\fnms{Weerapat Satitkanitkul}\ead[label=e1]{a.kyprianou@bath.ac.uk, ws250@bath.ac.uk}}

%\author{\fnms{Third} \snm{Author}\thanksref{t1,m2}
\ead[label=e3]{third@somewhere.com}
%\ead[label=u1,url]{http://www.foo.com}}

\thankstext{t2}{Supported by EPSRC grant EP/M001784/1}

\thankstext{t1}{Supported by EPSRC grant EP/L002442/1 and  	EP/M001784/1}

%\thankstext{t2}{First supporter of the project}
%\thankstext{t3}{Second supporter of the project}
%\runauthor{F. Author et al.}

\affiliation{University of Bath, CIMAT A.C. and University of Bath}

\address{
Andreas Kyprianou and Weerapat Satitkanitkul,\\
Department of Mathematical Sciences \\
University of Bath\\
 Claverton Down\\ Bath, BA2 7AY\\
 UK.\\
\printead{e1}
%\\
%\phantom{E-mail:\ }\printead*{e2}
}

\address{
Victor Rivero,\\
CIMAT A. C.,\\
Calle Jalisco s/n,\\
Col. Valenciana,\\
A. P. 402, C.P. 36000,\\
Guanajuato, Gto.,\\
Mexico.\\
\printead{e2}
}
\end{aug}

\begin{abstract}\hspace{0.1cm}
In this paper, we continue our understanding of the stable process from the perspective of the theory of self-similar Markov processes in the spirit of \cite{Deep1, Deep2}. In particular,  we turn our attention to the case of $d$-dimensional isotropic stable process, for $d\geq 2$. Using a completely new approach we consider the distribution of the point of closest reach. This leads us to a number of other substantial new results for this class of stable processes. We engage with a new radial excursion theory, never before used, from which we develop the classical Blumenthal--Getoor--Ray identities for first entry/exit into a ball, cf. \cite{BGR},   to the setting of $n$-tuple laws. We identify explicitly the stationary distribution of the stable process when reflected in its running radial supremum. Moreover, we provide  a representation of the Wiener--Hopf factorisation of the MAP that underlies the stable process through the Lamperti--Kiu transform. 
 \end{abstract}

\begin{keyword}[class=MSC]
\kwd[Primary ]{60G18, 60G52}
\kwd{}
%\kwd{60J15}
\kwd[; secondary ]{60G51}
\end{keyword}

\begin{keyword}
\kwd{Stable processes, L\'evy processes, excursion theory, Riesz--Bogdan--\.Zak transform, Lamperti--Kiu transform}
%\kwd{\LaTeXe}
\end{keyword}

\end{frontmatter}
\section{Introduction and main results}
For $d\geq 1$, let $X:= (X_t:t\geq 0)$, with probabilities $\mathbb{P}_x$, $x\in\mathbb{R}^d$,  be a $d$-dimensional isotropic stable process  of index $\alpha\in(0,2)$. That is to say that $X$ is a $\mathbb{R}^d$-valued L\'evy process having  characteristic triplet $(0,0,\Pi)$, where
\begin{equation}\label{jumpmeasure}
\Pi(B)=\frac{2^{\alpha}\Gamma(({d+\alpha})/{2})}{\pi^{d/2}|\Gamma(-{\alpha}/{2})|}\int_{B}\frac{1}{|y|^{\alpha+d}}{\rm d}y , \qquad B\in\mathcal{B}(\mathbb{R}).
\end{equation}
Equivalently, this means $X$ is a $d$-dimensional  L\'evy process with characteristic exponent $\Psi(\theta) = -\log\mathbb{E}_0({\rm e}^{{\rm i}\theta X_1})$ which satisfies
\[
\Psi(\theta) = |\theta|^\alpha, \qquad \theta\in\mathbb{R}.
\]

Stable processes are also self-similar in the sense that they satisfy a scaling property. More precisely, for $c>0$ and $x\in\mathbb{R}^d\setminus\{0\}$,
\begin{equation}\text{under } \mathbb{P}_x, \text{ the law of }(cX_{c^{-\alpha}t}, t\geq 0) \text{ is equal to } \mathbb{P}_{cx}.  \label{1/a}\end{equation}
As such, stable processes are useful prototypes for the study of  the class of L\'evy processes and, more recently, for the study of the class of self-similar Markov processes. The latter class of processes are regular strong Markov processes which respect the scaling relation \eqref{1/a}, and accordingly are identified as having stability index $1/\alpha$.

In the last few years, the fluctuation theory of one-dimensional stable processes has benefitted from the interplay between these two theories, in particular, exploiting  Lamperti-type decompositions of self-similar Markov processes. Examples of recent results include a deeper examination of the first passage problem, for the half-line, in one dimension,  \cite{n-tuple}, the distribution of the first point of entry into a strip, \cite{KPW}, and the stationary distribution of the process reflected in its radial maximum, \cite{Deep2}.

In this paper, we aim to push this agenda further into the setting of isotropic stable processes in dimension $d\geq 2$ (henceforth assumed).  Such processes are transient in the sense that  
\begin{equation}
\lim_{t\to\infty}|X_t|=\infty
\label{transient}
\end{equation}
almost surely. Accordingly, when issued from a point $x\neq 0$, it makes sense to define the point of closest reach to the origin; that is, the coordinates of the point in the closure of the range of $X$ with minimal radial distance from the origin. Our main results offer the exact distribution for the point of closest reach as well as a number of completely new fluctuation identities that fall out of its proof {\color{black}and the use of radial excursion theory}. 

Before describing them in more detail, let us define point of closest reach with a little more precision.  We need to note a number of facts. First, isotropy and transience ensures that $|X|$ is a conservative positive self-similar Markov process with index of self-similarity $1/\alpha$. Accordingly it can be represented via the classical Lamperti transformation
 \begin{equation}
 \label{lamperti}
|X_t| = {\rm e}^{\xi_{\varphi(t)}},\qquad  t\geq 0,
 \end{equation}
 where 
 \begin{equation}
 \label{varphi}
 \varphi(t) = \inf\{s>0: \int_0^s{\rm e}^{\alpha\xi_u}\d u>t\}
 \end{equation}
  and $\xi=(\xi_s: s\geq 0)$, with probabilities $\mathbf{P}_x$, $x\in\mathbb{R}$, is a L\'evy process. It was shown in \cite{CPP11} that the process $\xi$ belongs to the class of so-called hypergeometric L\'evy processes. In particular, its Wiener--Hopf factorisation is explicit. Indeed, suppose we write its characteristic exponent $\Psi_\xi(\theta) = -\log\mathbf{E}_0[\exp\{{\rm i}\theta\xi_1\}]$, $\theta\in\mathbb{R}$, then up to a multiplicative constant, 
 \begin{equation}
\Psi_\xi(\theta) = \frac{\Gamma(\frac{1}{2}(-{\rm i}\theta +\alpha ))}{\Gamma(-\frac{1}{2}{\rm i}\theta)}
\times
\frac{\Gamma(\frac{1}{2}({\rm i}\theta +d))}{\Gamma(\frac{1}{2}({\rm i}\theta +d-\alpha))}, \qquad \theta\in\mathbb{R},
\label{a}
\end{equation}
where the two terms either side of the multiplication sign constitute the two Wiener--Hopf factors. {\color{black} See e.g. Chapter VI in \cite{bertoin} for background.}
{\color{black}Recall that if $\Psi$ is the characteristic exponent of any L\'evy process, then there exist two Bernstein functions $\kappa$ and $\hat\kappa$ (see \cite{MR2978140} for a definition) such that, up to a multiplicative constant,
\begin{equation}\label{eq:normal_WH}
  \Psi({\rm i}\theta)=\kappa(-{\rm i}\theta)\hat\kappa({\rm i}\theta), \qquad \theta\in\mathbb{R}.
\end{equation}
%{\color{black}There could be many Bernstein functions $\kappa$ and $\hat\kappa$ which satisfy \eqref{eq:normal_WH}. However imposing the additional requirement that $\kappa$ and $\hat\kappa$  analytically extend in $\theta$ to the upper and lower half plane of $\mathbb{C}$, respectively, means that the factorisation in \eqref{eq:normal_WH} is unique up to a multiplicative constant; and 
Identity \eqref{eq:normal_WH} is what we refer to as the Wiener--Hopf factorisation.}
%{\bf VR: I am not quite sure of this claim, Bernstein functions are defined in $[0,\infty)$ and can always be extended analytically to the upper half plane.}
The left-hand factor codes the range of the running maximum  and the right-hand factor codes the range of the running infimum of $\xi$. It can be checked that both belong to the class of so-called beta subordinators (see \cite{KKP}, as well as some of the discussion later in this paper) and, in particular, have infinite activity. This {\color{black}implies} that $\xi$ is regular for both the upper and lower half-lines, which in turn, means that any sphere of radius $r>0$ is regular for both its interior and exterior for $X$. This and {\color{black}the fact that $X$ has} c\`adl\`ag paths ensures that, {\color{black} denoting} 
\[
{\texttt G}(t):= \sup\{s\leq t: |X_s| = \inf_{u\leq s}|X_u|\}, \qquad t\geq 0,
\]
the quantity $X_{{\texttt G}(t)}$ is well defined as the point of closest reach to the origin up to time $t$ in the sense that $X_{{\texttt G}(t) -} = X_{{\texttt G}(t)}$ and 
\[
|X_{{\texttt G}(t)} |= \inf_{s\leq t}|X_s|.
\]
The process $({\texttt G}(t), t\geq0)$ is monotone  increasing and hence there is no problem defining ${\texttt G}(\infty) = \lim_{t\to\infty}{\texttt G}(t)$ almost surely. Moreover, as $X$ is transient in the sense of \eqref{transient},  it is also clear that, almost surely, ${\texttt G}(\infty) = {\texttt G}(t)$ for all $t$ sufficiently large and that 
\[
|X_{{\texttt G}(\infty)} |= \inf_{s\geq 0}|X_s|.
\]
{\color{black} Our first main result provides explicitly the law of $X_{{\texttt G}(\infty)}.$}
\begin{theorem}[Point of Closest Reach to the origin]\label{main1} The law of the point of closest reach to the origin is given by 
\[
\mathbb{P}_x(X_{\emph{\texttt G}(\infty)}\in \d  y)=\pi^{-d/2}\dfrac{\Gamma\left({d}/{2}\right)^2}{\Gamma\left(({d-\alpha})/{2}\right)\Gamma\left({\alpha}/{2}\right)}
%\int_{|y|<|x|}
\, \frac{(|x|^2-|y|^2)^{\alpha/2}}{|x-y|^{d}|y|^{\alpha}}{\rm d} y,\qquad 0<|y|<|x|.
\]
%for $|y|<|x|$.
%for $f:\mathbb{R}^d\mapsto\mathbb{R}$ bounded and measurable.
\end{theorem}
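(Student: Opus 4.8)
The plan is to pin down separately the modulus and the direction of the point of closest reach, using self-similarity and isotropy to normalise and the Lamperti(--Kiu) decomposition to do the computations, and then to recombine. By \eqref{1/a} and isotropy it suffices to compute $\mathbb{P}_x(X_{\texttt{G}(\infty)}\in\cdot)$ when $|x|=1$; the general case follows by dilating and rotating. Writing $m:=|X_{\texttt{G}(\infty)}|=\inf_{s\ge 0}|X_s|$ and $\Theta:=X_{\texttt{G}(\infty)}/m\in S^{d-1}$, the asserted density is the image under $(r,\omega)\mapsto y=r\omega$ of the law of the pair $(m,\Theta)$, with Jacobian $r^{d-1}$, so I would first find the law of $m$ and then the conditional law of $\Theta$ given $m$.

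\textit{The modulus.} By the Lamperti transform \eqref{lamperti}, $m=\exp(\inf_{s\ge0}\xi_s)$ under $\mathbf{P}_0$, and since $X$ is transient in the sense of \eqref{transient}, $\xi$ drifts to $+\infty$; consequently $-\inf_{s\ge0}\xi_s$ is distributed as the descending ladder height subordinator of $\xi$ evaluated at its (exponential) killing time, so that $\mathbf{E}_0[\exp(\lambda\inf_{s\ge0}\xi_s)]=\hat\kappa(0)/\hat\kappa(\lambda)$ for $\lambda\ge 0$, where $\hat\kappa(\lambda)=\Gamma(\tfrac12(\lambda+d))/\Gamma(\tfrac12(\lambda+d-\alpha))$ is the right-hand (descending) Wiener--Hopf factor read off from \eqref{a}. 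Recognising $\hat\kappa(0)/\hat\kappa(\lambda)$ as a Mellin transform, a Beta integral gives that $m$ has density $c\,r^{d-\alpha-1}(1-r^2)^{\alpha/2-1}$ on $(0,1)$ with $c=2\Gamma(d/2)/(\Gamma(\alpha/2)\Gamma(\tfrac{d-\alpha}{2}))$.

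\textit{The direction (the crux).} Here I would invoke the Lamperti--Kiu representation, which realises $X$ through a Markov additive process $(\xi,\Theta)$ on $\mathbb{R}\times S^{d-1}$, so that the point of closest reach is the spatial value of $X$ at the a.s.\ unique instant at which the ordinate $\xi$ attains its overall infimum; the task is the \emph{joint} law of $(\inf_s\xi_s,\Theta)$. For this I would develop the radial excursion theory for $X$ away from its running radial infimum: the excursions form a Poisson point process, the last of them begins exactly at $\texttt{G}(\infty)$, and hence $(\inf_s\xi_s,\Theta)$ is the position of the killed descending ladder MAP of $(\xi,\Theta)$ at its killing instant, whose law is that MAP's potential measure weighted by the killing rate, computable from the explicit factorisation \eqref{a} together with the (known) angular/stationary structure of the MAP, or — in the spirit of \cite{BGR} — from the first-entry-into-a-ball identities that one upgrades to $n$-tuple laws. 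A complementary route is the Riesz--Bogdan--\.Zak transform: spatial inversion $z\mapsto z/|z|^2$ composed with a time change carries $X$ issued from $x$ to the stable process conditioned to be absorbed continuously at the origin issued from $x$, preserving directions and inverting moduli, thereby carrying the point of closest reach to the point of \emph{maximal} modulus in the range of the conditioned process — a last-passage overshoot for its radial part, again accessible through a Lamperti transform and a Wiener--Hopf factorisation. Either way, one expects that conditionally on $m=r$ the direction $\Theta$ has law proportional to $(1-r^2)\,|x-r\omega|^{-d}\,\sigma(\mathrm d\omega)$ on $S^{d-1}$, i.e.\ the Poisson kernel of the exterior of the ball $B(0,r)$.

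\textit{Recombination, constant, and the obstacle.} Multiplying the radial density by the conditional angular density, applying the change of variables $y=r\omega$, and dilating back to a general $x$ produces a density of the asserted form; the constant $c_d=\pi^{-d/2}\Gamma(d/2)^2/(\Gamma(\tfrac{d-\alpha}{2})\Gamma(\tfrac{\alpha}{2}))$ is then fixed by demanding that the spherical marginal of the claimed density reproduce the law of $m$, which works out using the elementary identity $\int_{S^{d-1}}|x-t\omega|^{-d}\,\sigma(\mathrm d\omega)=|S^{d-1}|/(1-t^2)$ for $0<t<1$ together with $|S^{d-1}|=2\pi^{d/2}/\Gamma(d/2)$. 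The genuine difficulty is the conditional angular law: it forces one either to construct the radial excursion theory — the excursion measure of $X$ off its running radial infimum and its entrance law — essentially from scratch, or to implement the Riesz--Bogdan--\.Zak substitution carefully and control the angular component of the Lamperti--Kiu MAP at a last-passage time. By comparison the modulus computation and the bookkeeping of constants are routine.
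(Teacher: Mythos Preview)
Your decomposition into radial and angular parts is exactly the strategy the authors themselves describe as ``tempting'' and ``naively attractive'' in Remark~1.3, and your modulus computation via the Wiener--Hopf factor $\hat\kappa$ reproduces their Remark~1.1. Your \emph{a posteriori} identification of the conditional angular law as the exterior Poisson kernel is also correct. The problem is that neither of the two routes you sketch for the angular step actually closes without circularity. The factorisation \eqref{a} is for the L\'evy process $\xi$ alone and carries no angular information; the genuine MAP factorisation (Theorem~\ref{WHF}) is, in this paper, a \emph{consequence} of Theorem~\ref{main1}, not an input to it. Likewise the ``upgraded'' $n$-tuple laws (Theorem~\ref{tripleintheorem}) are derived in Section~\ref{triplelaws} \emph{from} the explicit form of $U^-_x$ established by Theorem~\ref{main1}. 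The Riesz--Bogdan--\.Zak route faces the same issue: in Section~\ref{sectionRBZ} it is used to pass from $U^-_x$ to $U^+_x$, not to compute either one independently. The paper is explicit (end of Section~\ref{RET}) that attempts to back out $U^-_x$ from the available BGR identities ``seem\ldots\ difficult''.

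What the paper actually does is quite different and bypasses the modulus/angle split entirely. They study
\[
\Delta_r^\delta f(x)=\frac{1}{\delta}\,\mathbb{E}_x\bigl[f(\arg(X_{\texttt{G}(\infty)}));\,|X_{\texttt{G}(\infty)}|\in[r-\delta,r]\bigr]
\]
for continuous bounded $f$, expand it via the BGR first-entry law \eqref{BGR} conditioned on $X_{\tau^\oplus_r}$, and then push $\delta\downarrow 0$. Two technical lemmas do the work: Lemma~\ref{Lemma1} shows that on the thin annulus the potential $U^-_y$ concentrates enough that $\int U^-_y(\d z)f(\arg z)$ may be replaced by $f(\arg y)\,\mathbb{P}_y(\tau^\oplus_{r-\delta}=\infty)$, and Lemma~\ref{lemma:escape} gives the precise asymptotic $\mathbb{P}_{\rho\texttt{1}}(\tau^\oplus_{r-\delta}=\infty)\sim \tfrac{2}{\alpha}D_{\alpha,d}\,r^{-\alpha}(\rho^2-(r-\delta)^2)^{\alpha/2}$ uniformly for $\rho\in[r-\delta,r]$. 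Combining these with a Beta integral for $\int_{r-\delta}^r(\rho^2-(r-\delta)^2)^{\alpha/2}(r^2-\rho^2)^{-\alpha/2}\d\rho$ yields $\lim_{\delta\to0}\Delta_r^\delta f(x)$ explicitly, and integrating in $r$ gives the density. So the angular and radial parts are never decoupled; the joint density emerges directly from the limiting procedure.
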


Fundamentally, the proof of Theorem \ref{main1} will be derived from two main facts. The first is a suite of   exit/entrance formulae from balls for stable processes  which come from the classical work of Blumenthal--Getoor--Ray \cite{BGR}. To state these results, let us write %$\mathbb{S}_{d-1}(r) = \{x\in\mathbb{R}^d: |x|\leq r\}$,
\[
\tau^\oplus_r = \inf\{t>0: |X_t|<r\}\text{ and } \tau^\ominus_r = \inf\{t>0: |X_t|>r\},
\]
for $r>0$.
\begin{theorem}[Blumenthal--Getoor--Ray \cite{BGR}]\label{BGRtheorem}
For either $|x|<r<|y|$ when $\tau = \tau^\ominus_r$, or $|y|<r<|x|$ when $\tau = \tau^\oplus_r$,
\begin{equation}
\label{BGR}
\mathbb{P}_x(X_{\tau}\in {\rm d} y)=\pi^{-(d/2+1)}\Gamma\left({d}/{2} \right)\sin\left(\frac{\pi\alpha}{2}\right) \frac{|r^2-|x|^2|^{\alpha/2}}{|r^2-|y|^2|^{\alpha/2}}|x-y|^{-d}{\rm d} y.
\end{equation}
Moreover, for $|x|>r$, 
\begin{equation}
\mathbb{P}_x(\tau^\oplus_r = \infty) =  \frac{\Gamma(d/2)}{\Gamma((d - \alpha)/2)\Gamma(\alpha/2)}
\int_0^{(|x|^2/r^2)-1} (u+1)^{-d/2}u^{\alpha/2-1}\d u
\label{BGR2}
\end{equation}
and, for $|x|<r$ and bounded measurable $f$ on $\mathbb{R}^d$,
\[
\mathbb{E}_x\left[\int_0^{\tau^\ominus_r} f(X_s)\d s\right] = \int_{|y|>r}h^\ominus_r(x, y)f(y)\d y
\]
such that 
\begin{equation}
h^\ominus_r(x, y) = 2^{-\alpha}\pi^{-d/2}\frac{\Gamma(d/2)}{\Gamma(\alpha/2)^2}|x-y|^{\alpha - d} \int_0^{\zeta_r(x,y)}  (u+1)^{-d/2}u^{\alpha/2-1}\d u, \qquad |y|<r,
\label{BGR3}
\end{equation}
where $ \zeta^\ominus_r(x,y)= (r^2-|x|^2)(r^2-|y|^2)/r^2|x-y|^2$.
\end{theorem}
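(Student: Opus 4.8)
The plan is to treat the three displays separately, since \eqref{BGR2} is a purely radial statement while \eqref{BGR} and \eqref{BGR3} genuinely involve the $\mathbb{R}^d$-geometry. For \eqref{BGR2} I would give a short self-contained argument through the Lamperti transform \eqref{lamperti}--\eqref{varphi}. Because $X$ is transient, $\xi$ drifts to $+\infty$, the time change $\varphi$ is a bijection of $[0,\infty)$, and hence $\inf_{s\ge0}|X_s| = |x|\exp\bigl(\inf_{u\ge 0}(\xi_u-\xi_0)\bigr)$ with $\xi_0=\log|x|$ under $\mathbf{P}_{\log|x|}$. The classical Wiener--Hopf identity for the overall infimum of a L\'evy process drifting to $+\infty$ gives $\mathbf{E}\bigl[\exp\{\lambda\inf_{u\ge0}(\xi_u-\xi_0)\}\bigr]=\hat\kappa(0)/\hat\kappa(\lambda)$, where $\hat\kappa$ is the Laplace exponent of the (killed) descending ladder height subordinator; reading $\hat\kappa$ off the right-hand Wiener--Hopf factor in \eqref{a} yields, up to an irrelevant constant, $\hat\kappa(\lambda)=\Gamma(\tfrac12(\lambda+d))/\Gamma(\tfrac12(\lambda+d-\alpha))$, so that
\[
\mathbf{E}\bigl[\exp\{\lambda\inf_{u\ge0}(\xi_u-\xi_0)\}\bigr]=\frac{\Gamma(d/2)}{\Gamma((d-\alpha)/2)}\,\frac{\Gamma(\tfrac12(\lambda+d-\alpha))}{\Gamma(\tfrac12(\lambda+d))}=\mathbf{E}\bigl[U^{\lambda/2}\bigr],
\]
where $U$ has a $\mathrm{Beta}\bigl(\tfrac{d-\alpha}{2},\tfrac\alpha2\bigr)$ law; thus $\exp\{2\inf_{u\ge0}(\xi_u-\xi_0)\}\eqd U$. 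Consequently $\mathbb{P}_x(\tau^\oplus_r=\infty)=\mathbb{P}_x(\inf_s|X_s|\ge r)=\mathbb{P}\bigl(U\ge (r/|x|)^2\bigr)$, and the substitution $u=(1+v)^{-1}$ in the incomplete Beta integral turns this into \eqref{BGR2}.

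For \eqref{BGR3} I would follow the potential theory of Riesz and Blumenthal--Getoor--Ray. Since $d>\alpha$ the process is transient with whole-space Green kernel proportional to $|x-z|^{\alpha-d}$. The Green function $G_{B_r}$ of the ball is then pinned down by the requirements that it be $(-\Delta)^{\alpha/2}$-harmonic in $z\in B_r\setminus\{x\}$, carry the correct Riesz singularity at $z=x$, and vanish on $B_r^{\,c}$; I would verify that the right-hand side of \eqref{BGR3} --- which indeed vanishes as $|z|\uparrow r$, since $\zeta^\ominus_r(x,z)\downarrow 0$ --- has these properties, after which a uniqueness argument identifies it with $G_{B_r}$. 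Given $G_{B_r}$, the harmonic measure in the case $\tau=\tau^\ominus_r$ drops out of the Ikeda--Watanabe formula: $X$ is a pure-jump process that does not creep out of $B_r$, so for $|y|>r$
\[
\mathbb{P}_x\bigl(X_{\tau^\ominus_r}\in \d y\bigr)=\Bigl(\int_{B_r}G_{B_r}(x,z)\,\frac{c_{d,\alpha}}{|y-z|^{d+\alpha}}\,\d z\Bigr)\d y ,
\]
with $c_{d,\alpha}$ the constant from \eqref{jumpmeasure}; evaluating this Riesz-type integral produces the claimed kernel.

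The remaining case $\tau=\tau^\oplus_r$ of \eqref{BGR} I would obtain from the $\tau^\ominus_r$ case by the sphere inversion $K_r(z)=r^2z/|z|^2$. It is an involution swapping $B_r$ and $B_r^{\,c}$, and by the Riesz--Bogdan--\.Zak transform the isotropic stable process, after the inversion and a time change, is the Doob $h$-transform of $X$ by the $(-\Delta)^{\alpha/2}$-harmonic function $h(z)=|z|^{\alpha-d}$; since $h$ is bounded on $B_r^{\,c}$, this $h$-transform merely reweights the first-entry law by $h(y)/h(x)$. Using the inversion identities $|K_r(z)-K_r(w)|=r^2|z-w|/(|z|\,|w|)$ and $r^2-|K_r(z)|^2=r^2(|z|^2-r^2)/|z|^2$ together with the Jacobian of $K_r$, one checks that the powers of $|x|$ and $|y|$ generated by the change of variables cancel exactly against $h(y)/h(x)$, leaving precisely \eqref{BGR} for $\tau^\oplus_r$ (integrating it over $|y|<r$ recovers one minus the right-hand side of \eqref{BGR2}, a useful consistency check). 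The main obstacle throughout is the explicit evaluation of these Riesz-type integrals --- equivalently, verifying that \eqref{BGR3} solves the correct fractional boundary-value problem; once that classical computation is in hand, everything else is scaling, isotropy, and bookkeeping with the Wiener--Hopf factors already recorded in \eqref{a}.
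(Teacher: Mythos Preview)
The paper does not prove Theorem~\ref{BGRtheorem}; it is stated with attribution to Blumenthal--Getoor--Ray and used as a classical input throughout, so there is no proof in the paper to compare your proposal against.

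That said, your plan is sound. Your derivation of \eqref{BGR2} via the Lamperti representation and the explicit descending Wiener--Hopf factor in \eqref{a} is correct and self-contained; the paper in fact runs this logic in the opposite direction in Remark~\ref{rem1}, starting from \eqref{BGR2} (taken as known) and reading off the Beta law of $|X_{\texttt{G}(\infty)}|$. Your treatment of \eqref{BGR3} and the $\tau^\ominus_r$ case of \eqref{BGR} via the fractional boundary-value characterisation together with the Ikeda--Watanabe/L\'evy-system formula is the classical route from the original source. Finally, your passage from the $\tau^\ominus_r$ case to the $\tau^\oplus_r$ case by the Riesz--Bogdan--\.Zak inversion is exactly the mechanism the paper deploys in Section~\ref{IntegrationN} to obtain the exterior Green kernel of Lemma~\ref{resolventin} from \eqref{BGR3}, so that step of your argument mirrors a technique the paper itself uses, just applied to a different identity. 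The only substantive work you have deferred is the verification that the right-hand side of \eqref{BGR3} really is the Green function of the ball (equivalently, the evaluation of the Riesz convolution producing \eqref{BGR}); you flag this yourself, and it is indeed where the hard analysis in the original Blumenthal--Getoor--Ray paper lies.
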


\begin{rem}\rm
\label{rem1}
It is worth remarking that \eqref{BGR2} can be used to derive the density of $|X_{{\texttt G}(\infty)}|$ quite easily.  {\color{black} Indeed, thanks to the scaling property} and rotational symmetry, it suffices in this respect to consider the law of $|X_{{\texttt G}(\infty)}|$ under $\mathbb{P}_{\texttt{1}}$, where ${\texttt 1} = (1,0,\cdots,0)$ is the `North Pole' on $\mathbb{S}_{d-1}$.
In this respect, we note that $\mathbb{P}_{\texttt{1}}(|X_{{\texttt G}(\infty)}| \leq r) =1- \mathbb{P}_{\texttt{1}}(\tau^\oplus_r = \infty)$, hence, {\color{black}for $\gamma>0$},
\begin{align*}
\mathbb{E}_{\texttt{1}}[|X_{{\texttt G}(\infty)}|^{2\gamma}]& =\int_0^1 r^{2\gamma} {\d}\mathbb{P}_{\texttt{1}}(|X_{{\texttt G}(\infty)}| \leq r)\\
&=\frac{2\Gamma(d/2)}{\Gamma((d - \alpha)/2)\Gamma(\alpha/2)}\int_0^1r^{2\gamma+ (d-\alpha)-1}(1-r^2)^{\frac{\alpha}{2} - 1}\d r\\
& = \frac{\Gamma(d/2)}{\Gamma((d - \alpha)/2)\Gamma(\alpha/2)}\int_0^1 u^{\gamma+ \frac{(d-\alpha)}{2} -1}(1-u)^{\frac{\alpha}{2}- 1}\d u.
\end{align*}
From this it is straightforward to see that $|X_{{\texttt G}(\infty)}|$ under $\mathbb{P}_{\texttt{1}}$ is equal in law to $\sqrt{\texttt{A} }$, where $\texttt{A}$ is a Beta$((d-\alpha)/2, \alpha/2)$ distribution. 
 \end{rem}

The second main fact that drives the proof of Theorem \ref{main1} is the Lamperti--Kiu representation of self-similar Markov processes. To describe it, we need to introduce the notion of a Markov Additive Process, henceforth written MAP for short.

Let $\mathbb{S}_{d-1} =\{x\in\mathbb{R}^d: |x|=1\}$. With an abuse  of previous notation, we say that $(\xi,\Theta) =( (\xi_t, \Theta_t), t\geq 0)$ is a MAP if it is a regular Strong Markov Process on $\mathbb{R}\times\mathbb{S}_{d-1},$ with probabilities $\mathbf{P}_{x,\theta}$, $x\in\mathbb{R}^d$, $\theta\in\mathbb{S}_{d-1}$,  such that, for any $t\geq 0$, the conditional law of the process $((\xi_{s+t}-\xi_t,\Theta_{s+t}):s\geq 0)$, given  $\{(\xi_u,\Theta_u), u\leq t\},$ is that of $(\xi,\Theta)$ under $\mathbf{P}_{0,\theta}$, with $\theta=\Theta_t$. For a MAP pair $(\xi, \Theta)$, we call $\xi$ the {\it ordinate} and $\Theta$ the {\it modulator}.
 
According to one of the main results in \cite{ACGZ}, there exists a MAP such that the $d$-dimensional isotropic stable process can be written
   \begin{equation}\label{eq:lamperti_kiu}
    X_t  = \exp\{\xi_{\varphi(t)}\} \Theta_{\varphi(t)}\qquad t \geq 0,
  \end{equation}
  where $\varphi$ has the same definition as \eqref{varphi}.
Now we see the reason for our preemptive choice of notation as clearly $|X_t|$ now agrees with \eqref{lamperti} and we can understand e.g. $\mathbf{P}_x(\xi_t \in A) = \int_{\mathbb{S}_{d-1}}\mathbf{P}_{x,\theta}(\xi_t\in A, \, \Theta_t\in \d\theta)$, for $t\geq 0$ and $A\in\mathcal{B}(\mathbb{R})$. Whilst the processes $\Theta$ and $\xi$ are corollated, it is clearly the case that $\Theta$ is isotropic in the distributional sense, and hence an ergodic process on a compact domain with uniform stationary distribution.

\begin{rem}\rm
Noting that  $X_{{\texttt G}(\infty)} = |X_{{\texttt G}(\infty)}|\times \arg(X_{{\texttt G}(\infty)})$, it is tempting to believe that it is a simple step to take the distributional identity in Remark \ref{rem1} into the law of $X_{{\texttt G}(\infty)}$. Somewhat naively, this is a particularly attractive perspective because of the similarity between \eqref{BGR} and the {\it a postiori} conclusion in Theorem \ref{main1}.  Indeed one of our approaches was to try to derive the one from the other by a simple limiting procedure. Making this idea rigorous  turned out to be much more difficult than originally anticipated on account of the very subtle nature of the correlation between radial and angular behaviour of the MAP that underlies the stable process. 
\end{rem}

Our proof of Theorem \ref{main1} will take us on a journey through an excursion theory of $X$ from its radial maximum. In dimension $d\geq 2,$ this is the first time, to our knowledge, that such a radial excursion theory has been used, see however \cite{CKPR12}.  This will also allow us to prove the $n$-tuple laws at first entry/exit of a ball (below), which provide a non-trivial extension to the classical identities of Blumenthal, Getoor and Ray \cite{BGR} given in Theorem \ref{BGRtheorem}.
Indeed, once the relevant radial excursion theory is made clear, the following theorem and its corollary emerge as a consequence of an application of the appropriate {\color{black}exit} system, very much in the spirit of how analogous calculations would be made e.g. in the setting of L\'evy processes. What makes them difficult, however, is that the underlying excursion theory deals with excursions of the process $X_t/M_t$, $t\geq 0$, away from the {\it set} $\mathbb{S}_{d-1}$, where $M_t := \sup_{s\leq t} |X_{s}|$, $t\geq 0$. As such it is significantly harder to deal with the family of  associated excursion measures that appear in the {\color{black}exit} system and which are indexed by $\mathbb{S}_{d-1}${\color{black}, see below for further details}.

\begin{theorem}[Triple law at first entrance/exit of a ball]\label{tripleintheorem}
Fix $r>0$ and define, for $x,z,y,v\in\mathbb{R}^d\backslash\{0\}$,
\[
\chi_x(z, y, v): = \pi^{-3d/2}\frac{\Gamma(({d+\alpha})/{2})}{|\Gamma(-{\alpha}/{2})|}\frac{\Gamma(d/2)^2}{\Gamma(\alpha/2)^2} \frac{||z|^2-|x|^2|^{\alpha/2}||y|^2-|z|^2|^{\alpha/2}}
{|z|^\alpha|z-x|^{d}|z-y|^{d}|v-y|^{\alpha+d}} .
\] 
\begin{itemize}
\item[(i)] Write 
\[
{\emph{\texttt G}}(\tau_r^\oplus) = \sup\{s<\tau^\oplus_r: |X_s| =\inf_{u\leq s}|X_u| \}
\]
for the instant of closest reach of the origin before first entry into $r\mathbb{S}_{d-1}$.
 For $|x|>|z|>r$, $|y|>|z|$ and $|v|<r$,
\begin{align*}
%\label{triplein}
&\mathbb{P}_x(X_{{\emph{\texttt G}}(\tau_r^\oplus)}\in \d z, \, X_{\tau^\oplus_r-}\in \d y,\, X_{\tau^\oplus_r}\in \d v;\, \tau^\oplus_r<\infty)
=
 \chi_x(z, y, v)\,
\d z\, \d y\,\d v.
\end{align*}
%where
%\begin{equation}
%c_{\alpha,d}
% = \frac{\Gamma(({d+\alpha})/{2})}{|\Gamma(-{\alpha}/{2})|}\frac{\Gamma(d/2)^2}{\pi^{3d/2}\Gamma(\alpha/2)^2}.
%\label{kappa}
%\end{equation}
\item[(ii)] Define $\mathcal{G}(t) = \sup\{s<t: |X_s| = \sup_{u\leq s}|X_u|\}$, $t\geq 0$,
and write
\[
\mathcal{G}(\tau^\ominus_r) = \sup\{s<\tau^\ominus_r: |X_s| =\sup_{u\leq s}|X_u| \}.
\]
for the instant of furtherest reach from the origin immediately before first exit from $r\mathbb{S}_{d-1}$. For $|x|<|z|<r$, $|y|<|z|$ and $|v|>r$,
\[
\mathbb{P}_x(X_{{\mathcal G}(\tau_r^\ominus)}\in \d z, \, X_{\tau^\ominus_r-}\in \d y,\, X_{\tau^\ominus_r}\in \d v)=
 \chi_x(z, y, v)\,
\d z\, \d y\,\d v.
\]

\end{itemize}
\end{theorem}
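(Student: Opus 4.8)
The plan is to derive Theorem \ref{tripleintheorem} from the radial excursion theory together with the Blumenthal--Getoor--Ray ingredients collected in Theorem \ref{BGRtheorem}. I will treat part (i) in detail; part (ii) is entirely analogous, with the r\^ole of the radial infimum replaced by the radial supremum and with $\tau^\oplus_r$ replaced by $\tau^\ominus_r$. The starting observation is that the process $R_t:=|X_t|$ is a positive self-similar Markov process whose Lamperti exponent $\xi$ is the hypergeometric L\'evy process of \eqref{a}; its descending ladder structure is a beta subordinator of infinite activity, so the point of closest reach is reached continuously (as already used in the introduction to make sense of $X_{\texttt{G}(\infty)}$). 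The key structural fact is that, on the event $\{\tau^\oplus_r<\infty\}$, the path of $X$ decomposes at the time $\texttt{G}(\tau^\oplus_r)$ into (a) the trajectory run up to its point of closest reach $z:=X_{\texttt{G}(\tau^\oplus_r)}$, which via the Riesz--Bogdan--\.Zak/time-reversal duality is governed by the law of $X$ conditioned to avoid a ball, and (b) an independent post-minimum piece started from $z$, which is a copy of $X$ killed on first entry to $r\mathbb{S}_{d-1}$, from which we read off the overshoot--undershoot pair $(X_{\tau^\oplus_r-},X_{\tau^\oplus_r})=(y,v)$.

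The first step is to establish the law of $X_{\texttt{G}(\infty)}$ via the excursion decomposition of $X/M$ (or rather $R$ reflected in its radial infimum) away from the sphere: applying the exit system, the ``last'' excursion straddling $\texttt{G}(\infty)$ is governed by the excursion measure of the dual process, and integrating the mass of excursions that never return below a given level against the entrance law yields exactly the expression in Theorem \ref{main1}; equivalently one can short-cut this by differentiating \eqref{BGR2} in $r$ and unfolding the angular part by isotropy, but the excursion computation is what generalises to the triple law. Given $X_{\texttt{G}(\infty)}\in\d z$, the strong Markov property at $\texttt{G}(\infty)$ — more precisely, the fact that conditionally on the past up to $\texttt{G}(\tau^\oplus_r)$ the future is an independent stable process issued from $z$ — reduces the remaining two coordinates to the joint law of $(X_{\tau^\oplus_r-},X_{\tau^\oplus_r})$ for $X$ started at $z$ with $|z|>r$. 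This last object is the classical overshoot/undershoot law at first entry into a ball: one computes it by writing $\mathbb{P}_z(X_{\tau^\oplus_r-}\in\d y, X_{\tau^\oplus_r}\in\d v)=\big(\int \text{Green density up to }\tau^\ominus \text{ of the exterior}\big)\,\Pi(v-y)\,\d y\,\d v$ using the compensation formula, where the relevant potential is (the exterior analogue of) \eqref{BGR3} and $\Pi$ is the jump kernel \eqref{jumpmeasure}; the constant $2^\alpha\Gamma((d+\alpha)/2)/(\pi^{d/2}|\Gamma(-\alpha/2)|)$ and the factor $|v-y|^{-(\alpha+d)}$ in $\chi_x$ come from this jump kernel, while the $|y|^2-|z|^2$ and $\Gamma(d/2)^2/\Gamma(\alpha/2)^2$ factors come from the exterior potential density.

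Assembling the pieces, the joint density is the product of the closest-reach density from Step 1 (which supplies the factor $(|z|^2-|x|^2)^{\alpha/2}|z-x|^{-d}|z|^{-\alpha}$ and the Gamma ratio $\Gamma(d/2)^2/(\Gamma((d-\alpha)/2)\Gamma(\alpha/2))$) and the entrance triple law from Step 2; the $\Gamma((d-\alpha)/2)$ in the denominator of Step 1 cancels against a matching factor produced in the exterior-potential normalisation of Step 2, leaving precisely the constant displayed in $\chi_x$. The scaling property \eqref{1/a} can be used throughout to reduce, if convenient, to $r=1$, and isotropy handles the angular integrations. The main obstacle — and the reason this is not a routine application of the exit system — is Step 1 in its excursion-theoretic form: the excursions of $X/M$ are excursions away from the \emph{set} $\mathbb{S}_{d-1}$ rather than from a point, so the family of excursion measures $\{\exc_\theta:\theta\in\mathbb{S}_{d-1}\}$ is genuinely $\theta$-indexed and one must control how the entrance law and the ``height'' of an excursion depend on the starting point on the sphere; identifying this family explicitly (which is where the hypergeometric Wiener--Hopf factorisation \eqref{a} and the Riesz--Bogdan--\.Zak transform enter) and justifying the interchange of the exit-system integration with the conditioning on $\{X_{\texttt{G}(\tau^\oplus_r)}\in\d z\}$ is the technical heart of the argument. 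Once that is in place, the gluing of the two independent path fragments at $\texttt{G}(\tau^\oplus_r)$ and the bookkeeping of constants is mechanical.
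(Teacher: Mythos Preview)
Your proposal contains a genuine gap at its structural core. You invoke ``the strong Markov property at $\texttt{G}(\tau^\oplus_r)$'' and claim the post-minimum piece is ``a copy of $X$ killed on first entry to $r\mathbb{S}_{d-1}$''. Neither statement is correct: $\texttt{G}(\tau^\oplus_r)$ is a last-exit time, not a stopping time, so the strong Markov property does not apply; and under any Williams-type splitting at the radial minimum, the post-minimum fragment is conditioned never to go below $|z|$ again before jumping into the ball, so it is \emph{not} an unconditioned stable process. The factorisation
\[
\mathbb{P}_x(X_{\texttt{G}(\tau^\oplus_r)}\in\d z,\,X_{\tau^\oplus_r-}\in\d y,\,X_{\tau^\oplus_r}\in\d v)
\;=\;\mathbb{P}_x(X_{\texttt{G}(\tau^\oplus_r)}\in\d z)\;\mathbb{P}_z(X_{\tau^\oplus_r-}\in\d y,\,X_{\tau^\oplus_r}\in\d v)
\]
is therefore false, and you can see this directly from the formulae: the $y$-dependence in $\chi_x(z,y,v)$ is $||y|^2-|z|^2|^{\alpha/2}|z-y|^{-d}$, which is the density of the ascending ladder potential $U^+_z(\d y)$, whereas $\mathbb{P}_z(X_{\tau^\oplus_r-}\in\d y,\,X_{\tau^\oplus_r}\in\d v)$ involves the exterior Green function $h^\oplus_r(z,y)$ of Lemma~\ref{resolventin}, namely $|z-y|^{\alpha-d}\int_0^{\zeta^\oplus_r(z,y)}(u+1)^{-d/2}u^{\alpha/2-1}\d u$. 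These are different objects (the latter depends on $r$, the former does not), so your claimed bookkeeping that ``the $|y|^2-|z|^2$ factor comes from the exterior potential density'' cannot go through.

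The paper's route sidesteps this by never attempting to split at $\texttt{G}(\tau^\oplus_r)$. Instead it writes, via the L\'evy system, $\mathbb{E}_x[f(X_{\texttt{G}(\tau^\oplus_r)})g(X_{\tau^\oplus_r-})h(X_{\tau^\oplus_r})]=\mathbb{E}_x[\int_0^{\tau^\oplus_r} f(X_{\texttt{G}(t)})g(X_t)k(X_t)\,\d t]$ with $k$ the jump-kernel convolution of $h$, and then applies Maisonneuve's exit formula to this occupation integral. That produces $\int U^-_x(\d z)\,f(z)\,\mathbb{N}_{\arg(z)}(\int_0^\zeta\cdots)$, and the crucial input is Proposition~\ref{Npotential}, which identifies the excursion occupation potential with $U^+_z$ (not with $h^\oplus_r$). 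The ``conditioning to stay above $|z|$'' that you were missing is exactly what the excursion measure $\mathbb{N}_{\arg(z)}$ encodes, and Proposition~\ref{Npotential} is where that conditioning gets converted into the ladder potential $U^+_z$.
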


Marginalising the first  triple law in Theorem \ref{main1} to give  the joint law of the pair   $(X_{{\texttt G}(\tau_r^\oplus)},\, X_{\tau^\oplus_r})$ or the pair $(X_{\tau^\oplus_r-},\, X_{\tau^\oplus_r})$ is not necessarily straightforward (although the reader familiar with the manipulation of Riesz potentials may feel more comfortable as such). 
Whist an analytical computation for the marginalisation should be possible, if not tedious, we provide a proof which   combines other fluctuation identities that we will uncover {\it en route}.  

\begin{cor}[First entrance/exit and closest reach]\label{double1}
Fix $r>0$ and define, for $x,z,v\in\mathbb{R}^d\backslash\{0\}$,
\[
\chi_x(z, \bullet, v): = \frac{\Gamma(d/2)^2}{\pi^{d}|\Gamma(-\alpha/2)|\Gamma(\alpha/2)} 
\frac{||z|^2-|x|^2|^{\alpha/2}}{||z|^2-|v|^2|^{\alpha/2}|z-v|^{d}|z-x|^{d}}.
\] 
\begin{itemize}
\item[(i)] 
 For $|x|>|z|>r$,  $|v|<r$,
\begin{align*}
%\label{triplein}
&\mathbb{P}_x(X_{{\emph{\texttt G}}(\tau_r^\oplus)}\in \d z, \,X_{\tau^\oplus_r}\in \d v;\, \tau^\oplus_r<\infty)
=
 \chi_x(z, \bullet, v)
\d z\, \d v.
\end{align*}
\item[(ii)]   For $|x|<|z|<r$ and $|v|>r$,
\[
\mathbb{P}_x(X_{{{\mathcal G}}(\tau_r^\ominus)}\in \d z,  \,X_{\tau^\ominus_r}\in \d v)=
 \chi_x(z, \bullet, v)\,
\d z\, \d v.
\]
\end{itemize}
\end{cor}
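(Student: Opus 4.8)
The plan is to obtain Corollary~\ref{double1}(i) structurally, as the $X_{\tau^\oplus_r-}$-marginal of the triple law of Theorem~\ref{tripleintheorem}, by splitting the path at the instant of closest reach ${\texttt G}(\tau^\oplus_r)$ and combining the resulting exit-system decomposition with the Blumenthal--Getoor--Ray identities of Theorem~\ref{BGRtheorem}; part (ii) is then deduced from part (i) by the Riesz--Bogdan--\.Zak transform. The key is that the kernel $\chi_x(z,\bullet,v)$ factorises into a factor depending only on $(x,z)$ — the renewal (potential) measure of the descending ladder MAP of $(\xi,\Theta)$ evaluated at $z$ — and a factor depending only on $(z,v)$ — the rate at which, from a radial record at $z$, the process enters $B_r$ by a single jump out of $\{|w|\ge|z|\}$ — and that both factors are (suitably renormalised) Blumenthal--Getoor--Ray quantities.

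First I would record the pathwise picture behind the triple law: on $\{\tau^\oplus_r<\infty\}$, if $X_{{\texttt G}(\tau^\oplus_r)}=z$ with $|z|>r$, then $|X_s|\ge|z|$ for all $s<\tau^\oplus_r$, so $\tau^\oplus_r=\tau^\oplus_{|z|}$ and $B_r$ is entered by a single jump from $\{|w|\ge|z|\}$ directly into $\{|v|<r\}$, skipping the annulus $r\le|w|<|z|$. Splitting the path at ${\texttt G}(\tau^\oplus_r)$ via the exit system from the radial infimum that underlies Theorem~\ref{tripleintheorem}, and using the L\'evy system of $X$ for that terminal jump, the triple law factorises as $U_x(\d z)\,h^\oplus_{|z|}(z,w)\,\nu(v-w)\,\d w\,\d v$, where $U_x(\d z)$ is the renewal measure of the ladder MAP at $z$, $\nu(u)=2^\alpha\pi^{-d/2}\Gamma(({d+\alpha})/{2})|\Gamma(-{\alpha}/{2})|^{-1}|u|^{-\alpha-d}$ is the stable jump density from \eqref{jumpmeasure}, and $h^\oplus_\rho(z,\cdot)$ is the occupation density of $X$ started from the sphere $\rho\mathbb S_{d-1}$ in the excursion sense and killed on entering $B_\rho$. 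Marginalising out $X_{\tau^\oplus_r-}\in\d y$ therefore amounts precisely to evaluating
\[
\int_{|w|>|z|} h^\oplus_{|z|}(z,w)\,|w-v|^{-\alpha-d}\,\d w, \qquad |z|>r>|v|,
\]
and the point is that this Green-kernel-against-jump-kernel convolution reproduces the entrance kernel into $B_{|z|}$, which (as in \eqref{BGR}, but renormalised at the sphere) has exactly the shape $(|z|^2-|v|^2)^{-\alpha/2}|z-v|^{-d}$ up to a Gamma constant.

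To make this concrete I would obtain $h^\oplus_\rho$ explicitly by pushing the Green function \eqref{BGR3} through the Kelvin inversion $w\mapsto w/|w|^2$: combined with the accompanying time change and the $h$-transform by $|x|^{\alpha-d}$, this sends $X$ to another isotropic stable process and the exterior of $B_\rho$ to the ball $B_{1/\rho}$, so \eqref{BGR3} transforms into the formula for $h^\oplus_\rho$ (the survival statement \eqref{BGR2}, already of exterior type, serves as a consistency check); likewise $U_x(\d z)$ is read off from \eqref{BGR2} and the scaling in Remark~\ref{rem1} as a radial Beta-type density times uniform measure on the sphere, giving the $(|x|^2-|z|^2)^{\alpha/2}|z-x|^{-d}$ factor. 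Assembling the two factors and simplifying the Gamma constants with the reflection formula yields $\chi_x(z,\bullet,v)$ in part (i). For part (ii), I would apply the Riesz--Bogdan--\.Zak transform to the part-(i) identity: closest reach of $X$ becomes furthest reach of the transformed process, $\tau^\oplus_r$ becomes $\tau^\ominus_{1/r}$, and the Jacobian of the inversion together with the $|x|^{\alpha-d}$ of the $h$-transform combine so that the transported identity is exactly the claim of part (ii) — the fact that the same kernel $\chi_x$ appears in both parts being the signature of this duality.

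I expect the main obstacle to be twofold. Analytically, it is the evaluation of $\int_{|w|>|z|}h^\oplus_{|z|}(z,w)|w-v|^{-\alpha-d}\,\d w$: though this is only a killed Green kernel convolved with the jump kernel, making all constants cancel cleanly requires careful bookkeeping through the Kelvin transform and several Beta-integral manipulations — precisely the tedium that the structural argument is meant to contain, and which the brute-force alternative of integrating $\chi_x(z,y,v)$ in $y$ over $|y|>|z|$ also runs into. Conceptually, the delicate step is the factorisation at ${\texttt G}(\tau^\oplus_r)$ and the meaning of "started from $\rho\mathbb S_{d-1}$ in the excursion sense": since the excursion measures in the exit system are indexed by the point of $\mathbb S_{d-1}$ at which an excursion begins, and the radial and angular parts of the underlying MAP are correlated, one cannot simply invoke the Markov property at ${\texttt G}(\tau^\oplus_r)$ but must argue inside the excursion measure, or via a monotone approximation $|z'|\downarrow|z|$ — the same subtlety flagged in the introduction.
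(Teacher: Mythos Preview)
Your approach is viable but is \emph{not} the route the paper takes, and it is worth seeing why the paper's route is shorter.

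The paper does not marginalise the triple law of Theorem~\ref{tripleintheorem}. In fact, just after stating Corollaries~\ref{double1}--\ref{double2} it explicitly remarks that although an analytical marginalisation ``should be possible, if not tedious'', it will instead combine other fluctuation identities. For part (i) it works directly at the two-variable level via the exit-system identity \eqref{two-law},
\[
\mathbb{E}_x[g(X_{{\texttt G}(\tau_1^\oplus)})f(X_{\tau^\oplus_1});\, \tau^\oplus_1<\infty]
=\int_{1<|z|<|x|}U^-_x(\d z)\, g(z)\,\mathbb{N}_{\arg(z)}\bigl(f(|z|{\rm e}^{\epsilon(\zeta)}\Theta^\epsilon(\zeta))\mathbf{1}(|z|{\rm e}^{\epsilon(\zeta)}<1);\ \zeta<\infty\bigr),
\]
and then substitutes the explicit density \eqref{U-} for $U^-_x$ together with the explicit excursion overshoot law of Proposition~\ref{Nzeta}. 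The latter was obtained not by a Riesz-integral computation, but by the simple limiting ratio
\[
\mathbb{N}_{\arg(x)}(f(|x|{\rm e}^{\epsilon(\zeta)}\Theta^\epsilon(\zeta));\zeta<\infty)
=\lim_{r\uparrow|x|}\frac{\mathbb{E}_x[f(X_{\tau^\oplus_r});\tau^\oplus_r<\infty]}{\mathbb{P}_x(\tau^\oplus_r=\infty)},
\]
into which the Blumenthal--Getoor--Ray formulae \eqref{BGR} and \eqref{BGR2} are inserted directly. Scaling then gives general $r$, and part (ii) follows by Riesz--Bogdan--\.Zak, as you propose.

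Your plan, by contrast, factors the triple law as $U^-_x(\d z)\times(\text{excursion occupation at }y)\times\nu(v-y)$ and then integrates out $y$. The integral you flag as the main obstacle,
\[
\int_{|y|>|z|}\frac{(|y|^2-|z|^2)^{\alpha/2}}{|z-y|^{d}\,|v-y|^{\alpha+d}}\,\d y,
\]
is, up to constants, exactly the quantity $\mathbb{N}_{\arg(z)}({\rm e}^{\epsilon(\zeta)}\Theta^\epsilon(\zeta)\in\cdot\,;\zeta<\infty)$ evaluated via the L\'evy system inside the excursion. So your ``structural'' recognition that this equals the entrance kernel renormalised at the sphere is precisely Proposition~\ref{Nzeta} reformulated --- and the paper's point is that it is easier to obtain that proposition by the limit above than by carrying out your Riesz integral. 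Your route thus passes through the triple law (hence through Proposition~\ref{Npotential}) and then re-derives Proposition~\ref{Nzeta} the hard way, whereas the paper uses only Proposition~\ref{Nzeta} and \eqref{two-law}.

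Two small points of care: (a) the full density of $U^-_x(\d z)$, including its angular part, is the content of Theorem~\ref{main1}, not of Remark~\ref{rem1} and \eqref{BGR2}, which give only the radial marginal; (b) your ``$h^\oplus_{|z|}(z,\cdot)$ in the excursion sense'' is not the Green function of Lemma~\ref{resolventin} evaluated on the boundary (where it vanishes) but rather the excursion occupation density, which by Proposition~\ref{Npotential} is a constant multiple of $|y|^\alpha\,U^+_z(\d y)$ --- you acknowledge the subtlety, but it is exactly the identification that makes the constants come out.
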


\begin{cor}[First entrance/exit and preceding position]\label{double2}
Fix $r>0$ and define, for $x,z,y,v\in\mathbb{R}^d\backslash\{0\}$,
\[
\chi_x(\bullet, y, v): = \frac{\Gamma(({d+\alpha})/{2})\Gamma(d/2)}{\pi^{d}|\Gamma(-{\alpha}/{2})|\Gamma(\alpha/2)^2}
\left( \int_0^{\zeta^\oplus_r(x,y)}  (u+1)^{-d/2}u^{\alpha/2-1}\d u\right)
 \frac{|x-y|^{\alpha - d}}{|v-y|^{\alpha + d}} \, \d v \,\d y,
\] 
where 
\[
\zeta^\oplus_r(x,y):= (|x|^2-r^2)(|y|^2-r^2)/r^2|x-y|^2.
\]
\begin{itemize}
\item[(i)] 
 For $|x|, |y|>r$,  $|v|<r$,
\begin{align*}
%\label{triplein}
&\mathbb{P}_x( X_{\tau^\oplus_r-}\in \d y, \,X_{\tau^\oplus_r}\in \d v;\, \tau^\oplus_r<\infty)
=
 \chi_x(\bullet, y, v)
\d y\, \d v.
\end{align*}
\item[(ii)]   For $|x|, |y|<r$ and $|v|>r$,
\[
\mathbb{P}_x( X_{\tau^\ominus_r-}\in \d y, \, X_{\tau^\ominus_r}\in \d v)=
 \chi_x(\bullet, y, v)\,
\d y\, \d v.
\]
\end{itemize}
\end{cor}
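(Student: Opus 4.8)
The plan is to bypass any direct marginalisation of the triple law in Theorem~\ref{tripleintheorem}, and instead to combine the compensation formula (the L\'evy system) for the jumps of $X$ with the potential densities of $X$ killed at the first crossing of the sphere $r\mathbb{S}_{d-1}$. The starting observation is that, as recalled in the Introduction, the ascending and descending ladder height subordinators of the L\'evy process $\xi$ underlying $|X|$ are beta subordinators and, in particular, have infinite activity and no drift; hence $\xi$ creeps at neither of its half-lines, so that $|X_{\tau^\oplus_r-}|>r$ on $\{\tau^\oplus_r<\infty\}$ and $|X_{\tau^\ominus_r-}|<r$. Thus first entry into (resp. first exit from) the ball of radius $r$ always occurs through a jump that lands strictly on the far side of the sphere.

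Write $\Pi(v-y):=\tfrac{2^{\alpha}\Gamma((d+\alpha)/2)}{\pi^{d/2}|\Gamma(-\alpha/2)|}|v-y|^{-(\alpha+d)}$ for the density of the jump measure \eqref{jumpmeasure}. Applying the compensation formula to the jump at $\tau^\oplus_r$ (resp. $\tau^\ominus_r$), with an appropriate predictable functional so that only the first crossing contributes, gives, for nonnegative measurable $h,g$,
\begin{equation*}
\mathbb{E}_x\big[h(X_{\tau^\oplus_r-})g(X_{\tau^\oplus_r});\,\tau^\oplus_r<\infty\big]=\int_{|y|>r}h(y)\Big(\int_{|v|<r}g(v)\,\Pi(v-y)\,\d v\Big)\,U^\oplus_r(x,\d y),
\end{equation*}
where $U^\oplus_r(x,\d y):=\mathbb{E}_x\big[\int_0^{\tau^\oplus_r}\1(X_s\in\d y)\,\d s\big]$ is the potential measure of $X$ killed on first entry into the ball (supported on $\{|y|>r\}$), and likewise for $\tau^\ominus_r$ with $U^\ominus_r(x,\d y):=\mathbb{E}_x\big[\int_0^{\tau^\ominus_r}\1(X_s\in\d y)\,\d s\big]$ supported on $\{|y|<r\}$. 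The absence of creeping ensures that this accounts for all of the relevant joint law on $\{|y|>r,\,|v|<r\}$ (resp. $\{|y|<r,\,|v|>r\}$), so the corollary reduces to identifying $U^\oplus_r$ and $U^\ominus_r$.

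For part~(ii) this is immediate: $U^\ominus_r(x,\d y)=h^\ominus_r(x,y)\,\d y$ is precisely the potential density \eqref{BGR3} of Theorem~\ref{BGRtheorem}, and since $\zeta^\ominus_r(x,y)=\zeta^\oplus_r(x,y)$ when $|x|,|y|<r$ (both $|x|^2-r^2$ and $|y|^2-r^2$ being negative there), multiplying $h^\ominus_r(x,y)$ by $\Pi(v-y)$ and collecting the constants $2^{-\alpha}\pi^{-d/2}\Gamma(d/2)/\Gamma(\alpha/2)^2$ and $2^{\alpha}\Gamma((d+\alpha)/2)/\pi^{d/2}|\Gamma(-\alpha/2)|$ reproduces $\chi_x(\bullet,y,v)$ exactly. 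For part~(i) one needs the potential density $h^\oplus_r(x,y)$ for $|x|,|y|>r$, i.e. the Green function of the exterior region $\{|y|>r\}$, which I would deduce from \eqref{BGR3} via the Riesz--Bogdan--\.Zak transform: the inversion $x\mapsto r^2x/|x|^2$ maps $\{|x|>r\}$ bijectively onto $\{|x|<r\}$ fixing $r\mathbb{S}_{d-1}$ pointwise, and after the attendant time change conjugates $X$ with its Doob transform by the stable-harmonic function $|x|^{\alpha-d}$; carrying the resulting Jacobian and $h$-transform factors through \eqref{BGR3} turns the Green function of the ball into
\begin{equation*}
h^\oplus_r(x,y)=2^{-\alpha}\pi^{-d/2}\frac{\Gamma(d/2)}{\Gamma(\alpha/2)^2}\,|x-y|^{\alpha-d}\int_0^{\zeta^\oplus_r(x,y)}(u+1)^{-d/2}u^{\alpha/2-1}\,\d u,
\end{equation*}
with $\zeta^\oplus_r(x,y)=(|x|^2-r^2)(|y|^2-r^2)/r^2|x-y|^2$. (Equivalently, $h^\oplus_r$ can be obtained by writing the Green function of $\{|y|>r\}$ as $G(x,y)-\mathbb{E}_x[G(X_{\tau^\oplus_r},y);\,\tau^\oplus_r<\infty]$, where $G(x,y)=c_{d,\alpha}|x-y|^{\alpha-d}$ is the whole-space potential density, and using the harmonic measure \eqref{BGR}; or by reading it off as the $z$-marginal of Theorem~\ref{tripleintheorem}(i) after factoring out $\Pi(v-y)$.) Multiplying by $\Pi(v-y)$ and collecting constants as in part~(ii) then yields $\chi_x(\bullet,y,v)$.

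The main obstacle I anticipate is the transfer of the Green function from the ball to its complement through the Riesz--Bogdan--\.Zak transform: one must track the time-change Jacobian and the $|x|^{\alpha-d}$ $h$-transform factors carefully enough that the multiplicative constant emerges unchanged, and verify (an elementary but error-prone computation) that $\zeta^\ominus_r$ is carried exactly into $\zeta^\oplus_r$. A secondary point requiring care is the rigorous setup of the compensation-formula step --- that the first crossing of $r\mathbb{S}_{d-1}$ is almost surely a jump time with $X_{\tau-}$ strictly beyond the sphere --- which is precisely where the no-creeping property of the ladder subordinators of $\xi$ enters.
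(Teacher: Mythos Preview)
Your proposal is correct and follows essentially the same route as the paper: the compensation formula for the jump at the crossing time, combined with the killed potential density (Lemma~\ref{resolventin} for the exterior, obtained from \eqref{BGR3} via the Riesz--Bogdan--\.Zak transform). The only organisational difference is that you prove~(ii) directly from the interior Green function \eqref{BGR3}, whereas the paper proves~(i) first and then deduces~(ii) by a second appeal to Riesz--Bogdan--\.Zak; your order is slightly more economical, but the content is the same.
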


%The reader will note through later computations that these joint laws are attainable more easily by combining other fluctuation identities that we will uncover {\it en route}. For example the joint law of  $(X_{{\texttt G}(\tau_r^\oplus-)},\, X_{\tau^\oplus_r})$ can be found later in \eqref{two-law}, whilst the joint law of $(X_{\tau^\oplus_r-},\, X_{\tau^\oplus_r})$ can be easily found  by integrating the jump measure \eqref{jumpmeasure} against the  conclusion of the forthcoming Lemma \ref{resolventin} in the appropriate way. Similar marginals for the second triple law are also available. 

 %We end this section with our final main result which pertains to the underlying MAP of $X$.
  In \cite{Deep1, Deep2}, one-dimensional stable processes were considered (up to first hitting of the origin in the case that $\alpha\in(1,2)$), for which the process $\Theta$ in the underlying MAP  is nothing more than a two-state Markov chain on $\{1,-1\}$. Such MAPs are known to have a Wiener--Hopf-type decomposition. 
 
 To be more precise, one may describe the semigroup of $(\xi,\Theta)$ via a matrix Laplace exponent which plays a similar role to the characteristic exponent of $\xi$. When it exists, the matrix $\boldsymbol \Psi$, mapping $\mathbb C$ to the space of $2\times 2$ complex valued matrices\footnote{Here  the matrix entries are arranged by
    \[
      A=\left(\begin{matrix}
          A_{1,1} & A_{1,-1}\\
          A_{-1,1} & A_{-1,-1}
      \end{matrix}\right).
    \]
  }, satisfies,
  \[
    (e^{-\boldsymbol \Psi(z) t})_{i,j}=\mathbf{E}_{0,i}[e^{-z\xi(t)};J_t=j], \qquad i,j = \pm1, t\geq 0.
  \]
In fact, it is known to take the form 
\begin{equation}
  \boldsymbol{\Psi}(z) =\left(
  \begin{array}{cc}
     \dfrac{\Gamma(\alpha+z)\Gamma(1-z)}
      {\Gamma(\alpha\hat\rho+z)\Gamma(1-\alpha\hat\rho- z)}
    & -\dfrac{\Gamma(\alpha+z)\Gamma(1-z)}
      {\Gamma(\alpha\hat\rho)\Gamma(1-\alpha\hat\rho)}
    \\
    &\\
  -  \dfrac{\Gamma(\alpha+z)\Gamma(1-z)}
      {\Gamma(\alpha\rho)\Gamma(1-\alpha\rho)}
    &  \dfrac{\Gamma(\alpha+z)\Gamma(1-z)}
      {\Gamma(\alpha\rho+z)\Gamma(1-\alpha\rho-z)}
  \end{array} 
  \right),
  \label{MAPHG}
\end{equation}
  for Re$({z}) \in (-1,\alpha)$; see \cite{CPR} and \cite{KKPW}.   Similar to the case of L\'evy processes, we can define $\boldsymbol\kappa$ and $\hat{\boldsymbol\kappa}$ as the matrix Laplace exponents of two MAPs, each with non-decreasing ordinate, whose ordinate ranges and accompanying modulation coincide in distribution with the the range of the running maximum of $\xi$ and that of the dual process $\hat\xi$, with accompanying modulation. 
  The analogue of the Wiener--Hopf factorisation for MAPs states that, up to pre-multiplying $\boldsymbol \kappa$ or $\hat{\boldsymbol \kappa}$ (and hence equivalently up to pre-multiplying $\boldsymbol \Psi$) by a strictly positive diagonal matrix, we have that
  \begin{equation}\label{eq:factorisation}
    \boldsymbol \Psi(-{\rm i}\lambda) = {\boldsymbol\Delta}^{-1}_{\pi}\hat{\boldsymbol \kappa}({\rm i}\lambda)^T{\boldsymbol\Delta}_{\pi}\boldsymbol\kappa(-{\rm i}\lambda),
  \end{equation}
for $\lambda\in\mathbb{R}$,  where
  \begin{equation*}%\label{eq:delta_matrix}
    {\boldsymbol\Delta}_\pi:=
    \left(\begin{array}{cc}
        \sin(\pi\alpha\rho),& 0 \\
        0 &
         \sin(\pi\alpha\hat\rho)
    \end{array}\right).
      \end{equation*}

In the setting of the MAP which underlies the stable process, the so-called deep Wiener--Hopf factorisation was computed in \cite{Deep1}, thereby providing the first  explicit example of the Wiener--Hopf factorisation for a MAP. When $X$ is a symmetric one-dimensional stable process, then, without loss of generality, we may take ${\boldsymbol\Delta}_{\pi}$ as the identity matrix, the underlying MAP becomes  symmetric,  in which case $\hat{\boldsymbol \kappa}^T = \hat{\boldsymbol \kappa}$ and, moreover, $\hat{\boldsymbol \kappa}(\lambda) = {\boldsymbol \kappa}(\lambda+1-\alpha)$, $\lambda\geq0$. In that case, the factorisation  simplifies to 
\begin{equation}
 \boldsymbol \Psi(-{\rm i}\lambda) ={\boldsymbol \kappa}({\rm i}\lambda + 1-\alpha){\boldsymbol \kappa}(-{\rm i}\lambda), \qquad \lambda\in\mathbb{R},
\label{WHFsymmetric1D}
\end{equation}
up to multiplication by a strictly positive diagonal matrix.

For dimension $d\geq 2$, by adopting the right mathematical language, we are also able to provide the deep factorisation of the $d$-dimensional isotropic stable process, which also generalises the situation in one dimension. To this end, let us introduce the notion of the descending ladder MAP process for $(\xi,\Theta)$. 

It is not difficult to show that the pair $((\overline\xi_t- {\xi}_t, \Theta_t), t\geq 0)$, forms a strong Markov process, where $\overline\xi_t: = \sup_{s\leq t}\xi_s$, $t\geq 0$ is the running maximum of $\xi$. Naturally, on account of the fact that $\xi$, as a lone process, is a L\'evy process, $(\overline\xi_t- {\xi}_t,\ t\geq0)$,  is also a strong Markov process, but we are more interested here on its dependency on $\Theta$. If we denote by $L$ the local time at zero of $\bar\xi - \xi$, then the strong Markov property tells us that $(L^{-1}_t, H^+_t, \Theta^+_t)$, $t\geq 0$, defines a Markov additive process, whose first two elements are ordinates that are non-decreasing, where $H^+_t =\xi_{L^{-1}_t}$ and whose modulator $\Theta^+_t = \Theta_{L^{-1}_t}$, $t\geq 0$. In this sense, $L$ also serves as a local time on the set $\{0\}\times\mathbb{S}_{d-1}$ of the Markov process $(\overline\xi - \xi, \Theta)$. Because $\xi$, alone, is also a L\'evy process then the pair $(L^{-1}, H^+)$, without reference to the associated modulation $\Theta^+$, are Markovian and play the role of the ascending ladder time and height subordinators of $\xi$. But again, we are more concerned here with their dependency on $\Theta^+$.

If we are to state a factorisation analogous to \eqref{WHFsymmetric1D}, we must understand  how we should define the  quantities that are analogous to $\boldsymbol\Psi$ and $\boldsymbol\kappa$. Inspiration to this end comes from
  \cite{Deep2}, where  it was shown that it is more convenient to understand the relationship  \eqref{eq:factorisation} in its inverse form. This is equivalent to showing how the resolvent of the    underlying MAP relates to the  potential measures associated to $\boldsymbol\kappa$ and $\hat{\boldsymbol\kappa}$.

Therefore, in the current setting of $d$-dimensional isotropic stable processes, we define the operators 
\[
{\bf R}_z[f](\theta) = \mathbf{E}_{0,\theta}\left[\int_0^\infty {\rm e}^{-z \xi_t}f(\Theta_t)\d t\right], \qquad \theta\in\mathbb{S}_{d-1}, z\in\mathbb{C}
\]
and
\[
\boldsymbol\rho_z[f](\theta) = \mathbf{E}_{0,\theta}\left[\int_0^\infty {\rm e}^{-z H^+_t}f(\Theta^+_t)\d t\right], \qquad \theta\in\mathbb{S}_{d-1}, z\in\mathbb{C},
\]
for bounded measurable $f: \mathbb{S}_{d-1}\mapsto[0,\infty)$, whenever the integrals make sense.

\begin{theorem}[Deep factorisation of the $d$-dimensional isotropic stable process]\label{WHF} Suppose that  $f: \mathbb{S}_{d-1}\mapsto\mathbb{R}$ is bounded and measurable. 
Then
\[
{\bf R}_{-{\rm i}\lambda}[f](\theta) = C_{\alpha,d}\, \boldsymbol\rho_{{\rm i}\lambda + d-\alpha}\big[\boldsymbol\rho_{-{\rm i}\lambda}[f]\big](\theta) , \qquad \theta\in\mathbb{S}_{d-1},
\lambda\in\mathbb{R},
\]
where 
$
C_{\alpha, d} = 2^{-\alpha}{\Gamma((d - \alpha)/2)^2}/{\Gamma(d/2)^2}. 
$ Moreover, 
\[
\boldsymbol\rho_z[f](\theta) =\pi^{-d/2}\frac{\Gamma(d/2)^2}{\Gamma((d - \alpha)/2)\Gamma(\alpha/2)}\int_{|y|>1}f(\arg(y)) 
\frac{||y|^2-|\theta|^2|^{\alpha/2}}
{|y|^{\alpha+z}|\theta-y|^{d} }\d y
, \qquad \Re(z)\geq 0
\]
and 
\[
{\bf R}_{-{\rm i}\lambda}[f](\theta) = \frac{\Gamma((d-\alpha)/2)}{2^\alpha\pi^{d/2}\Gamma(\alpha/2)}
\int_{\mathbb{R}^d}f(\arg(y))|y-\theta|^{{\rm i}\lambda-d} \d y, \qquad \lambda\in\mathbb{R}.
\]
%{\color{black} VR: What is the right subindex $z$ or $-i\lambda$.}
\end{theorem}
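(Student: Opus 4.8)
The plan is to obtain the three identities from the Lamperti--Kiu representation \eqref{eq:lamperti_kiu} together with the radial excursion theory developed above, the two structural ingredients being the explicit Riesz potential of the transient stable process and the self-duality of $X$, which is already visible in the Wiener--Hopf factorisation \eqref{a}.

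First I would run the time substitution behind \eqref{eq:lamperti_kiu}: writing $s=\int_0^u e^{\alpha\xi_r}\,\d r$, so that $X_s=e^{\xi_u}\Theta_u$ and $\d s=e^{\alpha\xi_u}\,\d u$, and recalling that ordinate $0$ corresponds to $X$ issued from $\theta\in\mathbb{S}_{d-1}$,
\[
{\bf R}_z[f](\theta)=\mathbf{E}_{0,\theta}\!\left[\int_0^\infty e^{-z\xi_u}f(\Theta_u)\,\d u\right]=\mathbb{E}_\theta\!\left[\int_0^\infty|X_s|^{-(z+\alpha)}f(\arg X_s)\,\d s\right].
\]
Letting $r\uparrow\infty$ in \eqref{BGR3} identifies the potential density of $X$ as $c_{\alpha,d}|x-y|^{\alpha-d}$ with $c_{\alpha,d}=\Gamma((d-\alpha)/2)/(2^{\alpha}\pi^{d/2}\Gamma(\alpha/2))$, so the right-hand side equals $c_{\alpha,d}\int_{\mathbb{R}^d}|y|^{-(z+\alpha)}f(\arg y)|\theta-y|^{\alpha-d}\,\d y$; the Kelvin inversion $y\mapsto y/|y|^2$ (the Riesz--Bogdan--\.Zak change of variables), under which $\arg$ is fixed, $|y|\mapsto|y|^{-1}$ and $|\theta-y|\mapsto|\theta-y|/|y|$, puts this in the form displayed for ${\bf R}_{-\iu\lambda}$. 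For $\Re z=0$ the integrals converge only conditionally, so this step must be carried out through a truncation.

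For $\boldsymbol\rho_z$ and the factorisation I would decompose the occupation integral along the running maximum $\overline\xi$. The ascending ladder height subordinator of $\xi$ is the beta subordinator whose Laplace exponent is the left-hand factor of \eqref{a}; it has zero drift, so $\{u:\xi_u=\overline\xi_u\}$ is Lebesgue-null and the compensation formula for the exit system of $(\overline\xi-\xi,\Theta)$ on $\{0\}\times\mathbb{S}_{d-1}$ gives
\[
{\bf R}_z[f](\theta)=\mathbf{E}_{0,\theta}\!\left[\int_0^\infty e^{-zH^+_\ell}\,g_z(\Theta^+_\ell)\,\d\ell\right]=\boldsymbol\rho_z\big[g_z\big](\theta),\qquad g_z(\omega)=\mathrm{n}_\omega\!\left(\int_0^{\zeta}e^{z\epsilon_u}f(\Theta^\epsilon_u)\,\d u\right),
\]
with $\mathrm{n}_\omega$ the excursion measure of $X/M$ away from $\mathbb{S}_{d-1}$ from $\omega$ and $\epsilon$ the generic excursion of $\overline\xi-\xi$. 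The functional $g_z$ is the standard Wiener--Hopf object: by the excursion description of $\xi$ below its maximum it is a potential operator of the descending ladder MAP evaluated at $-z$, and isotropy makes that MAP self-dual up to a rotation-invariant normalisation, so its ladder height subordinator is $\kappa(\,\cdot\,+d-\alpha)$ when $\kappa$ is that of the ascending ladder MAP --- which is exactly the relation between the two factors of \eqref{a}. Hence $g_{-\iu\lambda}=C_{\alpha,d}\,\boldsymbol\rho_{\iu\lambda+d-\alpha}[f]$, and since the rotation-invariant operators $\boldsymbol\rho_z$ commute this is the asserted factorisation. The closed form of $\boldsymbol\rho_z$ then follows by computing the potential measure $V(\theta,\d x,\d\omega)=\mathbf{E}_{0,\theta}\big[\int_0^\infty\1(H^+_\ell\in\d x,\ \Theta^+_\ell\in\d\omega)\,\d\ell\big]$ of the ascending ladder MAP from the first-exit/furthest-reach identity already available, Corollary~\ref{double1}(ii) (equivalently \eqref{BGR}), by a last-exit decomposition at the running radial supremum together with the scaling relation \eqref{1/a}: this gives $V(\theta,\d x,\d\omega)\propto(e^{2x}-1)^{\alpha/2}e^{(d-\alpha)x}|e^x\omega-\theta|^{-d}\,\d x\,\d\omega$, whose Laplace transform in $x$ is the stated $\boldsymbol\rho_z$, with the multiplicative constant fixed by the normalisation of the local time $L$. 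Finally, substituting this kernel twice into $C_{\alpha,d}\,\boldsymbol\rho_{\iu\lambda+d-\alpha}[\boldsymbol\rho_{-\iu\lambda}[f]]$ and carrying out the resulting Riesz-type convolution over $\mathbb{S}_{d-1}\times(0,\infty)$ must reproduce $c_{\alpha,d}\int_{\mathbb{R}^d}f(\arg y)|y-\theta|^{\iu\lambda-d}\,\d y$, which cross-checks the first step.

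The main obstacle is the excursion-theoretic part: building the exit system for $X/M$ with its $\mathbb{S}_{d-1}$-indexed family of excursion measures $(\mathrm{n}_\omega)_{\omega\in\mathbb{S}_{d-1}}$, identifying $g_z$ with a descending ladder potential, and transporting it back to $\boldsymbol\rho$ through self-duality. Because the radial and angular coordinates of the MAP are strongly correlated none of these steps factorises, and it is the careful tracking of the $\mathbb{S}_{d-1}$-dependence --- together with the merely conditional convergence of the relevant integrals on $\Re z=0$ --- that carries the weight; the beta-integral identity needed to collapse the double $\boldsymbol\rho$-kernel into the Riesz kernel is, by comparison, routine.
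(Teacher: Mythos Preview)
Your approach is essentially the paper's: decompose the MAP resolvent via the exit system at ladder times, identify the excursion occupation functional with the potential of the opposite ladder MAP, and convert descending to ascending potentials through the Riesz--Bogdan--\.Zak/Kelvin duality. The paper carries this out in the dual order---it decomposes along the \emph{descending} ladder first, obtaining
\[
{\bf R}_{-{\rm i}\lambda}[f](\theta)=\int_{|w|<1}U^-_\theta(\d w)\,\mathbb{N}_{\arg(w)}\!\left(\int_0^\zeta (|w|e^{\epsilon(s)})^{{\rm i}\lambda}f(\Theta^\epsilon(s))\,\d s\right),
\]
and then invokes Proposition~\ref{Npotential} to turn the inner excursion integral into $U^+_w$; this delivers $\hat{\boldsymbol\rho}_{{\rm i}\lambda}\bigl[\boldsymbol\rho_{-{\rm i}\lambda}[f]\bigr]$ directly in the order stated in the theorem, so the commutativity of the zonal operators $\boldsymbol\rho_z$ that you invoke is not needed. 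For $\boldsymbol\rho_z$ itself the paper simply reads off the kernel from the explicit formula \eqref{dualpotential} for $U^+_x$ (which was already derived from Theorem~\ref{main1} via Riesz--Bogdan--\.Zak), rather than going through Corollary~\ref{double1} and a last-exit decomposition.

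The one place where your sketch is thinner than the paper is the identification of your $g_z$ with the descending ladder potential. You label this ``the standard Wiener--Hopf object'', but in the MAP setting with $\mathbb{S}_{d-1}$-valued modulator this is precisely the content of (the dual of) Proposition~\ref{Npotential}, and the paper does not treat it as automatic: it is proved by a limiting argument, dividing $\mathbb{E}_x\bigl[\int_0^{\tau^\oplus_r}g(X_s)\,\d s\bigr]$ (computed explicitly via Lemma~\ref{resolventin}) by $\mathbb{P}_x(\tau^\oplus_r=\infty)$ and letting $r\uparrow|x|$, with Lemma~\ref{Lemma1} supplying the continuity needed to isolate the excursion measure at a single $\theta$. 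That argument, not the exit formula itself, is where the $\mathbb{S}_{d-1}$-dependence is actually controlled.
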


\noindent {\color{black}This, our third main result,} is the first example we know of in the literature which {\color{black}provides} in explicit detail the Wiener--Hopf factorisation of a MAP for which the modulator has an uncountable state space. 

\bigskip

Our final main result concerns the stationary distribution of the stable process reflected in its radial supremum. Define $M_t =\sup_{s\leq t}|X_s|$, $t\geq 0$. It is a straightforward computation to show that $(X_t/M_t, M_t)$, $t\geq0$ is a Markov process which lives on $\mathbb{B}_d\times(0,\infty)$, where $\mathbb{B}_d = \{x\in\mathbb{R}^d: |x|\leq 1\}$. Thanks to the transience of $X$, it is clear that $\lim_{t\to\infty}M_t = \infty$, however, thanks to repeated {\color{black}normalistaion} of $X$ by its radial maximum, we can expect that the $\lim_{t\to\infty}X_t/M_t$ exists in distribution. Indeed, in the one-dimensional setting this has already been proved to be the case in \cite{Deep2}.

\begin{theorem}\label{stdist}
For all bounded measurable $f: \mathbb{B}_d\mapsto \mathbb{R}$ and $x\in\mathbb{R}\backslash\{0\}$
\begin{align*}
&\lim_{t\to\infty}\mathbb{E}_x[f(X_t/M_t)] =\pi^{-d/2}\frac{\Gamma((d+\alpha)/2)}{\Gamma(\alpha/2)}
\int_{\mathbb{S}_{d-1}} \sigma_1(\d\phi)\int_{|w|<1} f(w) 
\frac{|1-|w|^2|^{\alpha/2}  }
{|\phi-w|^{d} }{\d w},
\end{align*}
where $\sigma_1(\d y)$ is the surface measure on $\mathbb{S}_{d-1}$, normalised to have unit mass.

\end{theorem}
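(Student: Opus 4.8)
The plan is to deduce Theorem \ref{stdist} from the radial excursion theory and, more concretely, from the deep factorisation machinery of Theorem \ref{WHF}, by identifying the limiting law of $X_t/M_t$ with the (suitably normalised) stationary behaviour of the process $(\overline\xi_t - \xi_t, \Theta_t)$ sampled at the excursion level. First I would recast the statement in Lamperti--Kiu coordinates: writing $|X_t| = \exp\{\xi_{\varphi(t)}\}$ and $M_t = \exp\{\overline\xi_{\varphi(t)}\}$, one has $X_t/M_t = \exp\{-(\overline\xi_{\varphi(t)} - \xi_{\varphi(t)})\}\,\Theta_{\varphi(t)}$, so that the law of $X_t/M_t$ is the law of $(\overline\xi_s - \xi_s, \Theta_s)$ at the (random, time-changed) instant $s = \varphi(t)$. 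Since $\varphi(t)\to\infty$ as $t\to\infty$ and $\xi$ drifts to $+\infty$ (equivalently $X$ is transient, cf. \eqref{transient}), the pair $(\overline\xi_s - \xi_s, \Theta_s)$ is a positive recurrent strong Markov process on $[0,\infty)\times\mathbb{S}_{d-1}$, and the Lamperti time change does not affect the limiting distribution because the clock $\int_0^s {\rm e}^{\alpha\xi_u}\d u$ grows to $\infty$. Hence $\lim_{t\to\infty}\mathbb{E}_x[f(X_t/M_t)]$ equals $\lim_{s\to\infty}\mathbf{E}_{\log|x|,\arg(x)}[f({\rm e}^{-(\overline\xi_s-\xi_s)}\Theta_s)]$, which is an ergodic average against the stationary measure of $(\overline\xi - \xi,\Theta)$.

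Next I would compute that stationary measure via the exit system / excursion decomposition already set up in the proof of Theorem \ref{tripleintheorem}. The point is that the occupation measure of $(\overline\xi - \xi, \Theta)$ over one excursion of $\overline\xi - \xi$ away from $\{0\}\times\mathbb{S}_{d-1}$, averaged over the entrance law governed by the ascending ladder modulator $\Theta^+$ (whose stationary law is uniform on $\mathbb{S}_{d-1}$, by isotropy), is precisely encoded by the resolvent operator ${\bf R}_z$ together with the ladder potential $\boldsymbol\rho_z$. Concretely, the stationary law of $(\overline\xi_s - \xi_s, \Theta_s)$ is proportional to
\[
\nu(\d u, \d\phi) \;\propto\; \int_{\mathbb{S}_{d-1}}\sigma_1(\d\theta)\;\mathbf{E}_{0,\theta}\!\left[\int_0^{\infty}\mathbbm{1}_{\{\overline\xi_s - \xi_s\in \d u,\ \Theta_s\in\d\phi\}}\,\d L_s^{\star}\right],
\]
where $L^\star$ is the appropriate additive functional killing the excursion at its right end; this is exactly the object that the identity ${\bf R}_{-{\rm i}\lambda}[f] = C_{\alpha,d}\,\boldsymbol\rho_{{\rm i}\lambda + d - \alpha}[\boldsymbol\rho_{-{\rm i}\lambda}[f]]$ controls in Fourier space. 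Pushing $u = \overline\xi_s - \xi_s$ forward through $w = {\rm e}^{-u}\phi$ (so $|w| < 1$) and using the explicit form of $\boldsymbol\rho_z$ from Theorem \ref{WHF}, namely the kernel $\pi^{-d/2}\,\Gamma(d/2)^2/(\Gamma((d-\alpha)/2)\Gamma(\alpha/2))\cdot |{|y|^2 - |\theta|^2}|^{\alpha/2}/(|y|^{\alpha+z}|\theta - y|^d)$, a change of variables converts the inner integral into $\int_{|w|<1} f(w)\,|1-|w|^2|^{\alpha/2}/|\phi - w|^d\,\d w$, with the constant $\Gamma((d+\alpha)/2)/\Gamma(\alpha/2)$ appearing after reconciling the normalising constants (the factor $\Gamma((d+\alpha)/2)$, as opposed to $\Gamma(d/2)^2/\Gamma((d-\alpha)/2)$, comes from the total mass normalisation, i.e. dividing by $\nu([0,\infty)\times\mathbb{S}_{d-1})$).

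I would organise the argument in the following steps: (1) prove the Markov property and positive recurrence of $(\overline\xi - \xi,\Theta)$ and reduce the theorem to computing its stationary law, taking care that the Lamperti time change and the starting point $x$ are asymptotically irrelevant; (2) write the stationary law as a normalised excursion-occupation measure using the exit system from the proof of Theorem \ref{tripleintheorem}, with uniform entrance law on $\mathbb{S}_{d-1}$ by isotropy; (3) express that occupation measure through the ladder potential $\boldsymbol\rho$ and evaluate it using the explicit kernel from Theorem \ref{WHF}; (4) perform the change of variables $w = {\rm e}^{-u}\phi$ and normalise, checking the constant. The main obstacle I anticipate is step (2)--(3): making rigorous the passage from "occupation measure over a generic excursion away from the sphere, under the stationary entrance law of the ladder modulator" to the clean resolvent identity, since the modulator $\Theta$ lives on the uncountable state space $\mathbb{S}_{d-1}$ and the excursion measures $\mathbb{P}^\circ_\theta$ are indexed by $\theta\in\mathbb{S}_{d-1}$; one must verify that integrating the exit formula against $\sigma_1$ collapses the $\theta$-dependence correctly and that the resulting measure is genuinely the stationary one (e.g. by checking it is invariant under the semigroup, or by a renewal-theoretic argument on $L^{-1}$). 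A secondary but purely computational nuisance will be tracking the multiplicative constants through the several normalisations so that the final answer has the stated coefficient $\pi^{-d/2}\Gamma((d+\alpha)/2)/\Gamma(\alpha/2)$.
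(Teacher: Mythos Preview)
Your overall strategy---recast $X_t/M_t$ via Lamperti--Kiu as $Z_{\varphi(t)}$ with $Z_s={\rm e}^{-(\overline\xi_s-\xi_s)}\Theta_s$, then compute a stationary law through the excursion/ladder machinery---is broadly the right idea and close in spirit to the paper. However, there is a genuine gap in step~(1): the assertion that ``the Lamperti time change does not affect the limiting distribution'' is false, and without correcting it your computation would produce the wrong density (off by a factor of $|w|^{\alpha}$). The clock satisfies $\d t={\rm e}^{\alpha\xi_s}\d s={\rm e}^{\alpha\overline\xi_s}\,|Z_s|^{\alpha}\d s$, so during each excursion of $\overline\xi-\xi$ away from $0$ the $t$-time spent in state $w$ is biased by $|w|^{\alpha}$. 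Consequently $\lim_{t\to\infty}\mathbb{E}_x[f(X_t/M_t)]$ equals the $|w|^{\alpha}$-size-biased version of the stationary law of $Z_s$, not the stationary law itself. A clean sanity check: the stationary law of $|Z_\infty|^2$ (computable from the Laplace exponent $\Phi^-$ of $H^-$) is Beta$((d-\alpha)/2,\alpha/2)$, whereas the Remark following Theorem~\ref{stdist} shows $\lim_t|X_t/M_t|^2\sim\,$Beta$(d/2,\alpha/2)$, which is precisely the $u^{\alpha/2}$-biased version. So your step~(1) would yield $(1-|w|^2)^{\alpha/2}|w|^{-\alpha}|\phi-w|^{-d}$ rather than the stated $(1-|w|^2)^{\alpha/2}|\phi-w|^{-d}$.

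By way of comparison, the paper handles this point explicitly and takes a rather different computational route. It does \emph{not} invoke Theorem~\ref{WHF}; instead it (i) inserts an independent exponential time $\mathbf{e}_q$ and studies $\mathbf{E}[g(Z_{\mathbf{e}_q})]$; (ii) uses the Riesz--Bogdan--\.Zak transform to pass from the reflection at the maximum to the reflection at the minimum under the $e^{(\alpha-d)\xi}$ change of measure, so that the excursion theory of Section~\ref{RET} (built for $\xi-\underline\xi$) applies; (iii) computes the excursion occupation via Proposition~\ref{Npotential} and the explicit ascending potential $U^+$; (iv) lets $q\downarrow 0$ using the classical identity $q=c\,\Lambda^+(q)\Lambda^-(q)$; (v) connects back to $\lim_t\mathbb{E}[f(X_t/M_t)]$ through a time-change result of Walsh, which is exactly where the extra factor $|w|^{\alpha}$ enters; and (vi) pins down the remaining constant $c$ by taking $f\equiv1$ and invoking a Poisson kernel identity. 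Your plan could be repaired by carrying the $|w|^{\alpha}$ weight through the excursion-occupation computation, but you should be aware that justifying the biased limit rigorously is exactly the delicate point the paper outsources to Walsh.
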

\begin{rem}\rm
 Although we are dealing with the case $d\geq 2$,  with the help of the duplication formula for gamma functions, we can verify that the above limiting identity agrees with the stationary distribution for the radially reflected process  when $d=1$  given in  Theorem 1.3 in \cite{Deep2} if we set $d = 1$ and $\alpha\in(0,1)$.
 
 We also note that
the stationary distribution in the previous theorem is equal in law to $\texttt{U}\times\sqrt{\texttt{B}}$, where $\texttt{U}$ is uniformly distributed on $\mathbb{S}_{d-1}$ and  $\texttt{B}$ is a Beta$(d/2, {\alpha}/{2})$ distribution.
 {\color{black}if} Indeed, suppose we take $f(w)  = |w|^{2\gamma} g(\arg(w))$ for {\color{black}$\gamma>0$}, then we also see that 
 \[
 \lim_{t\to\infty}\mathbb{E}_x[ f(X_t/M_t)] =\frac{2\Gamma((d+\alpha)/2)}{\Gamma(d/2)\Gamma(\alpha/2)}
\int_{\mathbb{S}_{d-1}} \sigma_1(\d\phi)
\int_0^1 r^{2\gamma+d-1} (1-r^2)^{\alpha/2} \d r \int_{\mathbb{S}_{d-1}}
\frac{g(\theta)}
{|\phi-r\theta|^{d} } \sigma_1(\d\theta).
 \]
 A Newton potential formula  tells us that
  $
 \int_{\mathbb{S}_{d-1}}
{|\phi-r\theta|^{-d} }  \sigma_1(\d\phi)= 1
$, see for example Remark III.2.5 in \cite{ALEAKyp},
and hence, after an application of Fubini's theorem for the two spherical integrals and change of variable, 
\[
 \lim_{t\to\infty}\mathbb{E}_x[ f(X_t/M_t)] =\frac{\Gamma((d+\alpha)/2)}{\Gamma(d/2)\Gamma(\alpha/2)}
\int_0^1 u^{\gamma + \frac{d}{2}-1} (1-u)^{\frac{\alpha}{2}-1} \d u \times  \int_{\mathbb{S}_{d-1}} g(\theta) \sigma_1(\d\theta),
 \]
verifying the claimed distributional decomposition.
 \end{rem}

The remainder of this paper is structured as follows. In the next section we discuss the fundamental tool that allows us to conduct our analysis: an appropriate excursion theory of the underlying MAP $(\xi, \Theta)$. This may otherwise be understood as (up to a {\color{black} change of time and change of scale space}) the excursion of $X$ from its radial minimum.  With this in hand, we progress directly to the proof of Theorem \ref{main1} in Section \ref{proofofmain1}. Thereafter, in Section \ref{sectionRBZ}, we introduce the so-called Riesz--Bogdan--\.Zak transform and discuss its relation to some of the key quantities that appear in the aforesaid radial fluctuation theory.  Next, in Section \ref{IntegrationN} we analyse in more detail some specific identities pertaining to integration with respect to the excursion measure that appears in Section \ref{RET}. These identities are then used to prove Theorem \ref{tripleintheorem} in Section \ref{triplelaws} and to prove the deep factorisation in Section \ref{deepproof}. Finally, we deal with the stationary distribution, which is proved in Section \ref{8}.

\section{Radial excursion theory}\label{RET}

One of the principal tools that we will use in our computations is that of radial excursion theory of $X$ from its running minimum. In order to build such a theory, we return to the  Lamperti--Kiu transformation \eqref{eq:lamperti_kiu}.
In the spirit of the discussion preceding Theorem \ref{WHF}, by considering, say, $\ell = (\ell_t, t\geq 0)$, the local time at $0$ of the reflected L\'evy process $(\xi_t - \underline{\xi}_t,  t\geq 0)$, where $\underline{\xi}_t : = \inf_{s\leq t}\xi_s$, $t\geq0$, we can build the descending ladder MAP $((H^-_t, \Theta^-_t), t\geq 0)$, in the obvious way. As before, although the local time $\ell$ pertains to the reflected L\'evy process $\xi-\underline\xi$, we will see below that it serves as an adequate choice for the local time 
of the Markov process $(\xi-\underline\xi, \Theta)$ on the set $\{0\}\times\mathbb{S}_{d-1}$ to the extent that we can use it in the context of Maisonneuve's exit formula. 

More precisely, 
suppose we define ${\texttt g}_t =\sup\{s< t: \xi_s = \underline\xi_s\},$ and recall that the regularity of $\xi$ for $(-\infty, 0)$ and $(0,\infty)$ ensures that it is well defined, as is ${\texttt g}_\infty = \lim_{t\to\infty}{\texttt g}_t$. Set 
\[
{\texttt d}_t = \inf\{s> t: \xi_s = \underline\xi_s\}
\]
and, for all $t>0$ such that ${\texttt d}_t > {\texttt g}_t$ the process 
\[
(\epsilon_{{\texttt g}_t}(s), \Theta^\epsilon_{{\texttt g}_t}(s)): = (\xi_{{\texttt g}_t +s}-\xi_{{\texttt g}_t}, \Theta_{{\texttt{g}_t + s}}), \qquad s\leq \zeta_{{\texttt g}_t}: = {\texttt d}_t-{\texttt g}_t,
\]
codes the excursion of $(\xi-\underline\xi, \Theta)$ from the set $(0,\mathbb{S}_{d-1})$ which straddles time $t$. Such excursions live in the space of $\mathbb{U}(\mathbb{R}\times\mathbb{S}_{d-1})$, the space of c\`adl\`ag paths %$(\epsilon, \Theta^\epsilon)$ 
with lifetime  $\zeta = \inf\{s>0: \epsilon(s) <0\}$ such that $(\epsilon(0),\Theta^\epsilon(0))\in \{0\}\times\mathbb{S}_{d-1}$, $(\epsilon(s), \Theta^\epsilon(s))\in (0,\infty)\times\mathbb{S}_{d-1}$, for $0<s<\zeta$,  and $\epsilon(\zeta)\in (-\infty,0)$.

Taking account of the Lamperti--Kiu transform \eqref{eq:lamperti_kiu}, it is natural to consider how the excursion of $(\xi - \underline{\xi}, \Theta)$ from $\{0\}\times\mathbb{S}_{d-1}$ translates into a radial excursion theory for the process 
\[
Y_t  : = {\rm e}^{\xi_t}\Theta_t, \qquad t\geq 0.
\]
Ignoring the time change in \eqref{eq:lamperti_kiu}, we see that the radial minima of the process $Y$ agree with the radial minima of the stable process $X$.
Indeed, an excursion of $(\xi - \underline{\xi}, \Theta)$ from $\{0\}\times\mathbb{S}_{d-1}$ constitutes an excursion of $(Y_t/\inf_{s\leq t}|Y_s|, t\geq 0)$, from $\mathbb{S}_{d-1}$, or equivalently an excursion of $Y$ from its running radial infimum. Moreover, we see that, for  all $t>0$ such that ${\texttt d}_t > {\texttt g}_t$, 
\[
Y_{\texttt{g}_t + s} = {\rm e}^{\xi_{\texttt{g}_t}} {\rm e}^{\epsilon_{\texttt{g}_t}(s)} \Theta^\epsilon_{{\texttt g}_t}(s) = |Y_{\texttt{g}_t}|{\rm e}^{\epsilon_{\texttt{g}_t}(s)} \Theta^\epsilon_{{\texttt g}_t}(s)
, \qquad s\leq \zeta_{{\texttt g}_t}.
\]
This will be useful to keep in mind in the forthcoming excursion computations.

For $t>0,$ let $R_t=\texttt{d}_t-t,$ and define the set $G = \{t>0: R_{t-}=0, R_{t}>0\} = \{{\texttt g}_s: s\geq 0\}$. 
The classical theory of exit systems in  \cite{M75} now implies that there exists an additive functional $(\Lambda_t, t\geq 0)$ carried by the set of times $\{t\geq0: (\xi_t - \underline{\xi}_t, \Theta_t)\in\{0\}\times\mathbb{S}_{d-1}\}$, with a bounded $1$-potential, and a family of {\it excursion measures}, $(\mathbb{N}_{\theta }, \theta\in\mathbb{S}_{d-1})$, such that  
\begin{itemize}
\item[(i)] the map $\theta\mapsto \mathbb{N}_{\theta}$ is a kernel from $\mathbb{S}_{d-1}$ to $\mathbb{R}\times\mathbb{S}_{d-1},$ such that $\mathbb{N}_{\theta}(1-e^{-\zeta})<\infty$ and $\mathbb{N}_{\theta}$ is carried by the set $\{(\epsilon(0+),\Theta^\epsilon(0)=(0,\theta)\}$ and $\{\zeta>0\};$
\item[(ii)]we have the {\it exit formula} \begin{align}
&\mathbf{E}_{x,\theta}\left[\sum_{g\in G}F((\xi_s, \Theta_s): s<g)H((\epsilon_{g}, \Theta^\epsilon_g))\right]\notag\\
&\hspace{2cm}=\mathbf{E}_{x,\theta}\left[\int_0^\infty F((\xi_s, \Theta_s): s< t)\mathbb{N}_{ \Theta_t}(H(\epsilon, \Theta^\epsilon)){\rm d}\Lambda_t\right],
\label{exitsystem1}
\end{align}
for $x\neq 0$, where $F$ is continuous on the space of c\`adl\`ag paths $\mathbb{D}(\mathbb{R}\times\mathbb{S}_{d-1})$ and $H$ is measurable on the space of c\`adl\`ag paths $\mathbb{U}(\mathbb{R}\times\mathbb{S}_{d-1});$
\item[(iii)] under any measure $\mathbb{N}_{\theta}$ the process $(\epsilon, \Theta^\epsilon)$ is Markovian with the same semigroup as $(\xi, \Theta)$ stopped at its first hitting time of $(-\infty,0]\times\mathbb{S}_{d-1}.$   
\end{itemize}
The couple $(\Lambda, \mathbb{N}^{\cdot})$ is called an exit system. Note that in Maisonneuve's original formulation, the pair $\Lambda$ and the kernel $\mathbb{N}$ is not unique, but once $\Lambda$ is chosen the measures $(\mathbb{N}_\theta, \theta\in \mathbb{S}_{d-1})$ are determined but for a $\Lambda$-neglectable set, i.e. a set $\mathcal{A}$ such that $\mathbf{E}_{x,\theta}(\int_{t\geq 0}1_{\{(\xi_s-\underline{\xi}_s,\Theta_s)\in\mathcal{A}\}}{\rm d}\Lambda_s)=0$. Since $\ell$ is an additive functional with a bounded $1$-potential, we will henceforth work with the exit system $(\ell, \mathbb{N}^{\cdot})$ corresponding to it. 

%Moreover, for a given $\Lambda$, there is a measurable function $c:\mathbb{S}_{d-1}\to[0,\infty)$ such that ${\rm d}\Lambda_t=c(\Theta_t){\rm d}\ell_t$ $t\geq 0.$ This is due to the fact that both $\Lambda$ and $\ell$ are continuous additive functionals of $(\xi-\underline{\xi},\theta)$ and have the same support. 

%of additive functional in \eqref{exitsystem1}, namely $\ell$, is not uniquely defined, but the choice we have here made will simply affect the structure of the family of excursion measures. %Note also that, thanks to the conditional stationary and independent increments of the MAP $(\xi, \Theta)$, each excursion is dependent only on the current value of $\Theta$ and hence the excursion  measures need only be indexed by $\mathbb{S}_{d-1}$

\medskip

The  importance   of \eqref{exitsystem1} can already be seen when we consider the distribution of $X_{{\texttt G}(\infty)}$. Indeed, we have  for bounded measurable $f$ on $\mathbb{R}^d$,
\begin{align}
\mathbb{E}_x[f(X_{{\texttt G}(\infty)})] &= 
\mathbf{E}_{\log|x|, \arg(x)}\left[\sum_{t\in G}f({\rm e}^{\xi_{t}} \Theta_t) \mathbf{1}{(\zeta_{t} =\infty)}\right]\notag\notag\\
&=\mathbf{E}_{\log|x|, \arg(x)}\left[\int_0^\infty f({\rm e}^{\xi_{t}} \Theta_t) \mathbb{N}_{\Theta_t}(\zeta=\infty)
\d \ell_t\right]\notag\\
&=\mathbf{E}_{\log|x|, \arg(x)}\left[\int_0^{\ell_\infty} f({\rm e}^{-H^-_t}\Theta^-_t) \mathbb{N}_{\Theta^-_t}(\zeta=\infty)\d t\right]\notag\\
&=\int_{|z|<|x|}U^-_x(\d z)f(z)\mathbb{N}_{\arg(z)}(\zeta = \infty),
\label{X_g}
\end{align}
where 
\[
U^-_x(\d z) :=\int_0^{\infty} \mathbf{P}_{\log |x|,\arg(x)}({\rm e}^{-H^-_t}\Theta^-_t\in \d z, \, t<\ell_\infty) \d t, \qquad |z|\leq|x|
\]
may be thought of as a potential. 
\begin{rem}\rm 
It is worth noting here that the definition of  $U^-_x$ is designed specifically to look at the expected occupation measure of the radial minima in cartesian coordinates, rather than in polar coordinates which would be another natural potential associated with $(H^-_t, \Theta^-_t)$, $t\geq 0$.
\end{rem}

On account of the fact that $X$ is transient, in the sense of \eqref{transient}, we know that $(H^-, \Theta^-)$  experiences killing at a rate that occurs, in principle, in a state-dependent manner, specifically $\mathbb{N}_\theta(\zeta = \infty)$, $\theta\in\mathbb{S}_{d-1}$. Isotropy allows us to conclude that all such rates take a common value and thanks to the arbitrary scaling of local time $\ell$, we can choose this common value to be unity. Said another way, $\ell_\infty$ is exponentially distributed with rate $1$.

In conclusion, we reach the identity 
\begin{equation}
\mathbb{E}_x[f(X_{{\texttt G}(\infty)})] = \int_{|z|<|x|}U^-_x(\d z)f(z)
\label{Ef(X_G)}
\end{equation}
or equivalently, the law of $X_{{\texttt G}(\infty)}$ under $\mathbb{P}_x$, $x\neq 0$, is nothing more than the measure  $U^-_x(\d z)$, $|z|\leq |x|$.  From this analysis, in combination with \eqref{BGR2}, we also get another handy identity which will soon be of use. For $r<|x|$, $\mathbb{P}_x(\tau^\oplus_r=\infty) =\mathbb{P}_x(|X_{{\texttt G}(\infty)}|>r)$ and hence, from Theorem~\ref{BGRtheorem} we have
 \begin{align}\mathbb{P}_x(\tau^\oplus_r=\infty)&=
\int_{r<|z|<|x|}U^-_x(\d z)\notag\\
&= \frac{\Gamma(d/2)}{\Gamma((d - \alpha)/2)\Gamma(\alpha/2)}\int_0^{(|x|^2/r^2)-1} (u+1)^{-d/2}u^{\alpha/2-1}\d u.
\label{noenter}
\end{align}

Another identity where we gain some insight into the quantity $U^-_x$ is the first passage result of Blumental-Getoor-Ray \cite{BGR} which was already stated in \eqref{BGR}. 
For example, the following identity emerges very quickly from \eqref{exitsystem1}. For bounded measurable functions $f,g$ on $\mathbb{R}^d$, 
\begin{align}
&\mathbb{E}_x[g(X_{{\texttt G}(\tau_1^\oplus)})f(X_{\tau^\oplus_1}); \tau^\oplus_1<\infty]\notag\\
& =  \int_{1<|z|<|x|}U^-_x(\d z)\int_{|y||z|<1}\mathbb{N}_{\arg(z)}({e}^{\epsilon(\zeta)}\Theta^\epsilon(\zeta)\in \d y; \zeta<\infty)g(z)f( |z|y).
\label{two-law}
\end{align}
With judicious computations in the spirit of those given above, one might expect to be able to extract an identity for $U^-_x$ in combination with \eqref{BGR}. For example, developing \eqref{two-law} we might write
\begin{align}
\mathbb{E}_x[f(|X_{\tau^\oplus_1}|); \tau^\oplus_1<\infty] &=  \int_{1<|z|<|x|}U^-_x(\d z)\int_{y>\log|z|}\mathbb{N}_{\arg(z)}(|\epsilon(\zeta)|\in \d y; \zeta<\infty)f(|z| {\rm e}^{-y})\notag\\
&=\int_{1<|z|<|x|}U^-_x(\d z)\int_{y>\log |z|}\nu( \d y)f(|z| {\rm e}^{-y})
\label{invertthis?}
\end{align}
for $|x|>1$ and bounded measurable $f$ on $\mathbb{R}^d$, where we have appealed to  isotropy to ensure that $\mathbb{N}_{\arg(z)}(|\epsilon(\zeta)|\in \d y)$ does not depend on $\arg(z)$ and thus can rather be written as $\nu( \d y)$, where $\nu$ is therefore the L\'evy measure of the subordinator $H^-$, {\color{black}see e.g. \cite{V}.}
On account of the fact that the Wiener--Hopf factorisation for $\xi$ is known, c.f. \eqref{a}, the measure $\nu$ can written explicitly; see \cite{CPP11}. Indeed, the normalisation of $\ell$ 
is equivalent to the requirement that $\Phi^-(0)=1$, where $\Phi^-$ is the Laplace exponent of $H^-$ and hence 
\[
\Phi^-(\lambda) = \int_{(0,\infty)} (1- {\rm e}^{-\lambda y})\nu(\d y) = \dfrac{\Gamma((d-\alpha)/2)\Gamma((\lambda +d)/2)}{\Gamma({d}/{2})\Gamma((\lambda +d-\alpha)/2)},\qquad \lambda\geq 0,
\]
which, inverting with the help of a change of variables and the beta integral (see also \cite{CPP11}),  tells us that 
\begin{equation}
\nu(\d y) = \frac{\alpha\Gamma((d-\alpha)/2)}{\Gamma({d}/{2})\Gamma(1-{\alpha}/{2})}
(1-{\rm e}^{-2y})^{-\frac{\alpha}{2}-1}{\rm e}^{-d y}\d y.
\label{nu}
\end{equation}
Nonetheless, despite the fact that the left-hand side of \eqref{invertthis?} and \eqref{nu} are explicitly available, it seems here, and in other similar computations of this type, difficult to back out an expression for the measure  $U^-_x$.

Whilst our approach will make use of some of the identities above, fundamentally we prove Theorem \ref{main1} via a method of approximation, out of which the expression we will obtain for $U^-_x$ can be {\color{black} cleverly used, in conjunction of the excursion theory above,  to derive a number of other identities.} 

\section{Proof of Theorem \ref{main1}}\label{proofofmain1}
We start with some notation. First define, for $x\neq 0$, $|x|>r$,  $\delta>0$ and  continuous, positive and bounded $f$ on $\mathbb{R}^d$,
\[
 \Delta_r^{\delta}f(x):=\frac{1}{\delta}\mathbb{E}_x \left[f(\arg(X_{\texttt{G}_\infty})) ,|X_{\texttt{G}_\infty}|\in [r-\delta,r]\right].
 \]
The crux of our proof is to establish a limit of $ \Delta_r^{\delta}f(x)$ in concrete terms as $\delta\to0$.

Note that, by conditioning on first entry into the ball of radius $r$, we have, with the help of the first entrance law \eqref{BGR} and \eqref{Ef(X_G)},
\begin{align} 
 \Delta_r^{\delta}f(x)%\mathbb{E}_x &\left[f(\arg(X_{\texttt{G}_\infty}));\,|X_{\texttt{G}_\infty}|\in (r-\delta,r]\right]\\
&=\frac{1}{\delta}\int_{|y|\in[r-\delta, r]}\mathbb{P}_x(X_{\tau^\oplus_r}\in {\rm d} y;\, \tau^\oplus_r<\infty) \mathbb{E}_y\left[f(\arg(X_{\texttt{G}_\infty})); \, |X_{\texttt{G}_\infty}|\in   (r-\delta,|y|]\right]\notag\\
&=\frac{1}{\delta}C_{\alpha,d}\int_{|y|\in[r-\delta, r]} {\rm d} y \left| \frac{r^2-|x|^2}{r^2-|y|^2}  \right|^{\alpha/2}|y-x|^{-d}\mathbb{E}_y\left[f(\arg(X_{\texttt{G}_\infty}));\, |X_{\texttt{G}_\infty}|\in   (r-\delta,|y|]\right]\notag\\
&=\frac{1}{\delta}C_{\alpha,d}|r^2-|x|^2|^{\alpha/2}\int_{|y|\in(r-\delta, r]} {\rm d} y\frac{ |y-x|^{-d}}{|r^2-|y|^2|^{\alpha/2}} 
\int_{r-\delta\leq |z|\leq |y|} U^-_y(\d z)f(\arg(z)),
\label{Delta}\end{align}
where 
\[
C_{\alpha,d} = \pi^{-(d/2+1)}\Gamma\left({d}/{2} \right)\sin\left(\frac{\pi\alpha}{2}\right).
\]
Our next objective is to try and replace $\int_{r-\delta\leq |z|\leq |y|} U^-_y(\d z)f(\arg(z))$ by a term of simpler form which can be asymptotically estimated in the limit as $\delta\to0$. To this end, we need some technical lemmas.

\begin{lemma}\label{Lemma1}
Suppose that $f$ is a bounded continuous function on $\mathbb{R}^d$. Then
\[
\lim_{\delta\to0}\sup_{|y|\in(r-\delta,r]}\left|\frac{\int_{r-\delta\leq |z|\leq |y|} U^-_y(\d z)f(z)}{\int_{r-\delta\leq |z|\leq |y|} U^-_y(\d z)}- f(y)
\right|=0.
\]
\end{lemma}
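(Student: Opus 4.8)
The plan is to show that the normalised restriction of $U^-_y$ to the thin shell $S_\delta(y):=\{z:r-\delta\le|z|\le|y|\}$ concentrates at $y$, uniformly in $|y|\in(r-\delta,r]$. Granting this, the lemma follows by uniform continuity of $f$ on the fixed compact annulus $\{r-\delta_0\le|z|\le r\}$: given $\epsilon>0$, choose $\eta>0$ with $|f(z)-f(y)|<\epsilon$ whenever $z,y$ lie in the annulus with $|z-y|<\eta$, and bound $|f(z)-f(y)|$ by $2\|f\|_\infty$ otherwise; then the expression inside the supremum is at most $\epsilon+2\|f\|_\infty\,U^-_y(S_\delta(y)\cap\{|z-y|\ge\eta\})/U^-_y(S_\delta(y))$. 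So it suffices to prove, for each fixed $\eta>0$,
\[
\lim_{\delta\to0}\ \sup_{|y|\in(r-\delta,r]}\ \frac{U^-_y\big(S_\delta(y)\cap\{|z-y|\ge\eta\}\big)}{U^-_y\big(S_\delta(y)\big)}=0 .
\]

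For the denominator, since $|X_{{\texttt G}(\infty)}|\le|y|$ always, $U^-_y(S_\delta(y))=\mathbb{P}_y(\tau^\oplus_{r-\delta}=\infty)$, and \eqref{noenter}/\eqref{BGR2} give $U^-_y(S_\delta(y))\asymp h^{\alpha/2}$ with constants depending only on $r,\delta_0,d,\alpha$, where $h:=h_\delta(y):=\log(|y|/(r-\delta))\asymp|y|-(r-\delta)$. For the numerator I would invoke scaling \eqref{1/a} and isotropy of $X$ --- hence of $U^-$ --- to move to the North Pole $\texttt{1}$: with $\rho:=(r-\delta)/|y|$ one has $h=-\log\rho$ and
\[
U^-_y\big(S_\delta(y)\cap\{|z-y|\ge\eta\}\big)=U^-_{\texttt{1}}\big(\{w:\rho\le|w|\le1,\ |w-\texttt{1}|\ge\eta/|y|\}\big)\ \le\ U^-_{\texttt{1}}(A_\rho),\quad A_\rho:=\{w:\rho\le|w|\le1,\ |w-\texttt{1}|\ge\eta_0\},
\]
with $\eta_0:=\eta/r$ fixed (using $\eta/|y|\ge\eta_0$); note $h\le\log(r/(r-\delta))\to0$ uniformly in $|y|\in(r-\delta,r]$ as $\delta\to0$.

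Next I would feed $U^-_{\texttt{1}}(A_\rho)$ into the radial excursion theory of Section~\ref{RET}. By \eqref{X_g} and the discussion following it --- the descending ladder MAP $(H^-,\Theta^-)$ is killed at the constant rate $1$, so $\ell_\infty\sim\mathrm{Exp}(1)$ independently of the unkilled path --- we have $U^-_{\texttt{1}}(A)=\mathbf{E}_{0,\texttt{1}}\big[\int_0^{\ell_\infty}\mathbf{1}(\mathrm{e}^{-H^-_t}\Theta^-_t\in A)\,\mathrm{d}t\big]$. In $A_\rho$ the radial constraint reads $H^-_t\in[0,h]$, i.e. $t<T_h:=\inf\{t:H^-_t>h\}$, and on $\{t<T_h\}$ one has $|\mathrm{e}^{-H^-_t}\Theta^-_t-\texttt{1}|\le(1-\mathrm{e}^{-H^-_t})+|\Theta^-_t-\texttt{1}|\le h+|\Theta^-_t-\texttt{1}|$, so the angular constraint forces $|\Theta^-_t-\texttt{1}|\ge\eta_0/2$ as soon as $h\le\eta_0/2$ (true for small $\delta$). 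Hence $U^-_{\texttt{1}}(A_\rho)\le\mathbf{E}\big[\int_0^{T_h}\mathbf{1}(|\Theta^-_t-\texttt{1}|\ge\eta_0/2)\,\mathrm{d}t\big]\le\mathbf{E}\big[T_h\,\mathbf{1}(\sigma<T_h)\big]$, where $\sigma:=\inf\{t:|\Theta^-_t-\texttt{1}|\ge\eta_0/2\}$. Now I would close by Cauchy--Schwarz, $\mathbf{E}[T_h\mathbf{1}(\sigma<T_h)]\le\mathbf{E}[T_h^2]^{1/2}\,\mathbf{P}(\sigma<T_h)^{1/2}$. Since $H^-$ is the driftless, infinite-activity ladder-height subordinator with Laplace exponent $\Phi^-(\lambda)=\Gamma((d-\alpha)/2)\Gamma((\lambda+d)/2)/[\Gamma(d/2)\Gamma((\lambda+d-\alpha)/2)]\sim c\,\lambda^{\alpha/2}$ as $\lambda\to\infty$ (Stirling), a routine Chernoff/Tauberian estimate (or \eqref{BGR2} read off directly) gives $\mathbf{E}[T_h]\asymp h^{\alpha/2}$ and $\mathbf{E}[T_h^2]\le C\,h^{\alpha}$ for small $h$; on the other hand $T_h\downarrow0$ a.s. as $h\downarrow0$ (infinite activity of $H^-$), whereas $\sigma>0$ a.s. (right-continuity of $\Theta^-$ with $\Theta^-_0=\texttt{1}$), so $\mathbf{1}(\sigma<T_h)\to0$ a.s. and $\mathbf{P}(\sigma<T_h)\to0$. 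Thus $U^-_{\texttt{1}}(A_\rho)\le(Ch^{\alpha})^{1/2}\mathbf{P}(\sigma<T_h)^{1/2}=o(h^{\alpha/2})$ while $U^-_y(S_\delta(y))\asymp h^{\alpha/2}$, so the displayed quotient is $O(\mathbf{P}(\sigma<T_h)^{1/2})\to0$ as $h\to0$, uniformly in $|y|$ because $h=h_\delta(y)\le\log(r/(r-\delta))\to0$.

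The step I expect to be the main obstacle --- and the reason a cruder bound does not obviously suffice --- is the subtle correlation between the radial and angular parts of the underlying MAP: a priori $\Theta^-$ could rotate macroscopically while $H^-$ grows only by the small amount $h$, and there is no cheap control of the joint law of $(H^-,\Theta^-)$. What rescues the argument is an exact order count: both $\mathbf{E}[T_h]$ and $\mathbf{E}[T_h^2]^{1/2}$ have exactly the same order $h^{\alpha/2}$ as the denominator, so Cauchy--Schwarz reduces everything to the purely qualitative fact $\mathbf{P}(\sigma<T_h)\to0$ --- i.e. the first angular-escape time is a.s. positive while the first radial-passage time vanishes --- which needs no joint information whatsoever. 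The only genuinely quantitative inputs are the two small-$h$ moment asymptotics for $T_h$, and these are immediate from the explicit $\Phi^-$ (or from \eqref{BGR2}).
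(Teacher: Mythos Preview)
Your argument is correct and arrives at the same probabilistic core as the paper --- namely, that the modulator $\Theta^-$ cannot leave a fixed cap around the starting point before the ladder height $H^-$ has grown by a vanishingly small amount --- but you reach this point by a genuinely different bookkeeping device. The paper multiplies and divides the ``far'' integral by the tail $\nu(\log(|z|/(r-\delta)),\infty)$ of the L\'evy measure of $H^-$; the numerator then becomes the probability $\mathbb{P}_y(X_{{\texttt G}(\tau^\oplus_{r-\delta})}\notin\mathcal{C}_{r,\delta,\varepsilon}(y),\,\tau^\oplus_{r-\delta}<\infty)$, which is bounded by $\mathbb{P}_{r\texttt{1}}(\upsilon_\varepsilon<\sigma^-_{r-\delta})$, while the denominator times the same $\nu$-tail is shown, via an explicit computation with the formula \eqref{nu}, to converge to a positive constant. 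You instead bound the far mass by the occupation functional $\mathbf{E}[T_h\mathbf{1}(\sigma<T_h)]$ and close with Cauchy--Schwarz and the second-moment bound $\mathbf{E}[T_h^2]\le C h^{\alpha}$. The paper's route avoids moment estimates entirely but needs the explicit $\nu$ and a limit computation; your route is more robust --- it only uses the Tauberian asymptotic $\Phi^-(\lambda)\sim c\lambda^{\alpha/2}$ together with the universal subordinator inequality $\mathbf{E}[T_h^2]\le 2\,\mathbf{E}[T_h]^2$ (which follows at once from $\mathbf{E}[T_h^2]=2\int_0^h V(h-x)\,U(\mathrm{d}x)\le 2V(h)^2$ with $V(h)=\mathbf{E}[T_h]$). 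Your phrase ``routine Chernoff/Tauberian'' would benefit from exactly this one-line justification; with it in place, the proof is complete and, if anything, slightly more transparent than the paper's, since it makes the order balance $\mathbf{E}[T_h^2]^{1/2}\asymp\mathbf{E}[T_h]\asymp U^-_y(S_\delta(y))$ explicit.
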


\begin{proof}Suppose that  $\mathcal{C}_{r, \delta,\varepsilon}(y)$ is the geometric region which coincides with the intersection of a cone with axis along $y$ with radial extent $2\varepsilon$, say $\mathcal{C}_\varepsilon$, and the annulus $\{z\in\mathbb{R}^d: r-\delta\leq |z|\leq r\}$; see Figure \ref{conediag}. 
Chose $\varepsilon,\delta$ such that 
\[
\sup_{z\in\mathcal{C}_{r, \delta,\varepsilon}(y)}|f(z)-f(y)|<\varepsilon',
\]
for some choice of $\varepsilon'\ll 1$. 

\begin{figure}[h!]
\begin{center}
\includegraphics[width = 10cm]{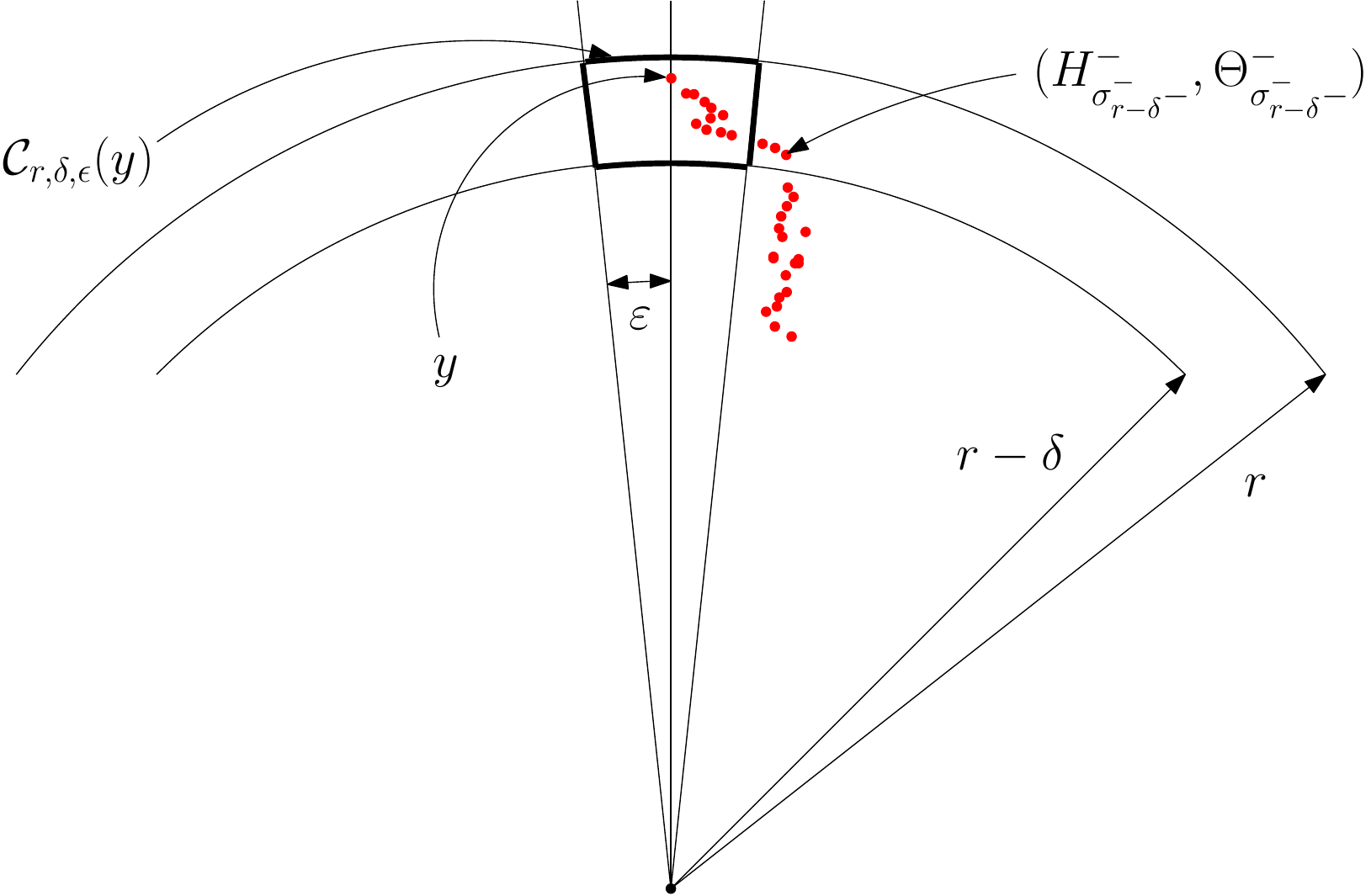}
\end{center}
\caption{The process $(H^-,\Theta^-)$ in relation to the domain $\mathcal{C}_{r, \delta,\varepsilon}(y)$.}
\label{conediag}
\end{figure}

We have
\begin{align}
&\sup_{|y|\in(r-\delta,r]}\left|\frac{\int_{r-\delta\leq |z|\leq |y|} U^-_y(\d z)f(z)}{\int_{r-\delta\leq |z|\leq |y|} U^-_y(\d z)}- f(y)
\right|\notag\\
&
\leq \varepsilon'+||f||_\infty
\sup_{|y|\in(r-\delta,r]}\frac{\int_{r-\delta\leq |z|\leq |y|} U^-_y(\d z)\mathbf{1}{(z\not\in \mathcal{C}_{r, \delta,\varepsilon}(y))}}{\int_{r-\delta\leq |z|\leq |y|} U^-_y(\d z)}.
\label{2ndterm}
\end{align}
In order to deal with the second term in the right-hand side above, taking the example computations of \eqref{two-law} and \eqref{invertthis?}, note that, for $|y|\in(r-\delta,r]$,
\begin{align}
\sup_{|y|\in(r-\delta,r]} 
\int_{r-\delta\leq |z|\leq |y|} & U^-_y(\d z)\mathbf{1}{(z\not\in\mathcal{C}_{r, \delta,\varepsilon}(y))} \nu\left( \log\left(\frac{|z|}{r-\delta}\right), \infty\right)\notag\\
&= \sup_{|y|\in(r-\delta,r]} 
\mathbb{P}_{y}(X_{\tau^\oplus_{r-\delta} - }\not\in\mathcal{C}_{r, \delta,\varepsilon}(y),\, \tau^\oplus_{r-\delta} <\infty)\notag\\
&= \sup_{\beta\in(r-\delta,r]} 
\mathbb{P}_{\beta\texttt{1}}(X_{\tau^\oplus_{r-\delta} - }\not\in\mathcal{C}_{r, \delta,\varepsilon}(\beta\texttt{1}),\,\tau^\oplus_{r-\delta} <\infty)\notag\\
&\leq
 \sup_{\beta\in(r-\delta,r]} 
 \mathbb{P}_{\beta\texttt{1}}(\Theta^-_{\sigma^-_{r-\delta} -}\not\in\mathcal{C}_\varepsilon\cap\mathbb{S}_{d-1},\,\sigma^-_{r-\delta}<\infty
)\notag\\
&\leq 
\sup_{\beta\in(r-\delta,r]}   \mathbb{P}_{\beta\texttt{1}}(\upsilon_\varepsilon <\sigma^-_{r-\delta}
)
\notag\\
&\leq\mathbb{P}_{r\texttt{1}}(\upsilon_\varepsilon <\sigma^-_{r-\delta}
)
\end{align}
where ${\texttt 1} = (1,0,\cdots,0)$ is the `North Pole' on $\mathbb{S}_{d-1}$, %$\beta\in(r-\delta,r]$ 
$
\sigma^-_{r-\delta} = \inf\{t>0: H^-_t<r-\delta\}
$
and $\upsilon_\varepsilon = \inf\{t>0: \Theta^-_t \not\in \mathcal{C}_\varepsilon\cap\mathbb{S}_{d-1}\}$.
Right-continuity of  paths now ensures that the right-hand side above tends to zero as $\delta\to0$.

On the other hand, from \eqref{noenter} 
\begin{align}
\label{itsaprobability}\int_{r-\delta\leq |z|\leq |y|} & U^-_y(\d z)
=\mathbb{P}_y(\tau^\oplus_{r-\delta}=\infty)  = \mathbb{P}_{\frac{|y|}{(r-\delta)}\texttt{1}}(\tau^\oplus_{1}=\infty),
 \end{align}
 where we have used isotropy in the final equality 
and from \eqref{BGR2} and  \eqref{nu} a rather elementary computation shows that
\begin{align*}
&\lim_{\eta\downarrow1}\nu\left( \log\eta, \infty\right)\mathbb{P}_{\eta\texttt{1}}(\tau^\oplus_{1}=\infty)\\
&= \lim_{\eta\downarrow1}
\frac{\alpha}{\Gamma({\alpha}/{2})\Gamma(1-{\alpha}/{2})}
\left(\int_{\log \eta}^\infty(1-{\rm e}^{-2v})^{-\frac{\alpha}{2}-1}{\rm e}^{-d v}\d v\right)
\left(\int_0^{\eta^2-1} (u+1)^{-d/2}u^{\alpha/2-1}\d u \right)\\
&=\frac{1}{\Gamma(1+{\alpha}/{2})\Gamma(1-{\alpha}/{2})}
\end{align*}
Hence %for any $\varepsilon''>0$, we can choose $\varepsilon$ such that 
\begin{align*}
&\lim_{\delta\to0}\sup_{|y|\in(r-\delta,r]}
\frac{\int_{r-\delta\leq |z|\leq |y|} U^-_y(\d z)\mathbf{1}{(z\not\in \mathcal{C}_{r, \delta,\varepsilon}(y))}}{\int_{r-\delta\leq |z|\leq |y|} U^-_y(\d z)}\\
&\leq\lim_{\delta\to0} \sup_{|y|\in(r-\delta,r]}
\frac{\int_{r-\delta\leq |z|\leq |y|} U^-_y(\d z)\mathbf{1}{(z\not\in \mathcal{C}_{r, \delta,\varepsilon}(y))}
\dfrac{\nu( \log\left({|z|}/{(r-\delta)}\right), \infty)}{\nu( \log\left({|z|}/{(r-\delta)}\right), \infty)}}
{\int_{r-\delta\leq |z|\leq |y|} U^-_y(\d z)}\\
&\leq\lim_{\delta\to0} \sup_{|y|\in(r-\delta,r]}
\frac{\int_{r-\delta\leq |z|\leq |y|} U^-_y(\d z)\mathbf{1}{(z\not\in \mathcal{C}_{r, \delta,\varepsilon}(y))}
\nu( \log\left({|z|}/{(r-\delta)}\right), \infty)}
{\nu( \log\left({|y|}/{(r-\delta)}\right), \infty) \mathbb{P}_y(\tau^\oplus_{r-\delta}=\infty)}\\
&\leq\lim_{\delta\to0}
\sup_{1<\eta<1+\frac{\delta}{(r-\delta)}}\frac{\mathbb{P}_{r\texttt{1}}(\upsilon_\varepsilon <\sigma^-_{r-\delta}
)
}{\nu( \log\eta, \infty) \mathbb{P}_{\eta\texttt{1}}(\tau^\oplus_{1}=\infty)}\\
&=0
\end{align*}
and thus plugging this back into \eqref{2ndterm} gives the result.
\end{proof}

With Lemma \ref{Lemma1} in hand, noting in particular the representation \eqref{itsaprobability}, we can now return to \eqref{Delta}  and note that, for each $\varepsilon>0$, we can choose $\delta$ sufficiently small such that
\[
 \Delta_r^{\delta}f(x) = D(\varepsilon) \Delta_r^{\delta}1(x) + \frac{1}{\delta}C_{\alpha,d}|r^2-|x|^2|^{\alpha/2}\int_{|y|\in(r-\delta, r]} {\rm d} y\frac{ |y-x|^{-d}}{|r^2-|y|^2|^{\alpha/2}}f(\arg(y)) \mathbb{P}_y(\tau^\oplus_{r-\delta}=\infty) ,
\]
where, $|D(\varepsilon)|<\varepsilon$ and for $|x|>r$,
\begin{align*}
\limsup_{\delta\to0}|\Delta_r^{\delta}1(x)|&\leq \limsup_{\delta\to0}\left|\frac{1}{\delta}C_{\alpha,d}|r^2-|x|^2|^{\alpha/2}\int_{|y|\in(r-\delta, r]} {\rm d} y\frac{ |y-x|^{-d}}{|r^2-|y|^2|^{\alpha/2}} \mathbb{P}_y(\tau^\oplus_{r-\delta}=\infty)\right|\\
%&=\varepsilon\limsup_{\delta\to0}|\Delta_r^{\delta}1(x)|\\
&=\limsup_{\delta\to0}\left|\frac{1}{\delta}\left(\mathbb{P}_x(\tau^\oplus_{r-\delta}=\infty) - \mathbb{P}_x(\tau^\oplus_{r}=\infty)\right)\right|\\
&=\frac{\Gamma(d/2)}{\Gamma((d - \alpha)/2)\Gamma(\alpha/2)}
\left|\left.\frac{\d}{\d v}\int_0^{(|x|^2/v^2)-1} (u+1)^{-d/2}u^{\alpha/2-1}\d u\right|_{v=r}\right|\\
&=\frac{2\Gamma(d/2)}{\Gamma((d - \alpha)/2)\Gamma(\alpha/2)}
\left(|x|^2-r^2\right)^{\alpha/2 -1}r^{d-1-\alpha} |x|^{2-d}
\end{align*}
where in the third equality we have used \eqref{BGR2}. 

We can now say that, if the limit exists, 
\begin{align}
&\lim_{\delta\to0}\Delta_r^{\delta}f(x)\notag\\
&= \lim_{\delta\to0}C_{\alpha,d}|r^2-|x|^2|^{\alpha/2}
\frac{1}{\delta}
\int_{|y|\in(r-\delta, r]} {\rm d} y\frac{ |y-x|^{-d}}{|r^2-|y|^2|^{\alpha/2}}f(\arg(y)) \mathbb{P}_y(\tau^\oplus_{r-\delta}=\infty)\notag\\
&=\lim_{\delta\to0}C_{\alpha,d}|r^2-|x|^2|^{\alpha/2}
\frac{1}{\delta}\int_{r-\delta}^r  \rho^{d-1}\d \rho \int_{\rho\mathbb{S}_{d-1}} \sigma_\rho(\d\theta)
\frac{ |\rho\theta-x|^{-d}}{|r^2-\rho^2|^{\alpha/2}}f(\theta) \mathbb{P}_{\rho\theta}(\tau^\oplus_{r-\delta}=\infty)\notag\\
&=\lim_{\delta\to0}C_{\alpha,d}|r^2-|x|^2|^{\alpha/2}
\frac{1}{\delta}\int_{r-\delta}^r  \rho^{d-1} \d \rho
\frac{\mathbb{P}_{\rho\texttt{1}}(\tau^\oplus_{r-\delta}=\infty)}{|r^2-\rho^2|^{\alpha/2}}
\int_{\rho\mathbb{S}_{d-1}} \sigma_\rho(\d\theta)
|\rho\theta-x|^{-d}f(\theta) ,
\label{ifexists}
\end{align}
where, in the second equality, we have switched from $d$-dimensional Lebesgue measure to the generalised  polar coordinate measure $\rho^{d-1}\d \rho\times \sigma_\rho(\d\theta)$, so that  $\rho>0$ is the radial distance from the origin and $\sigma_\rho(\d\theta)$ is the surface measure on $\rho\mathbb{S}_{d-1}$, normalised to have unit mass. In the third equality we have used isotropy to write $\mathbb{P}_{\rho\theta}(\tau^\oplus_{r-\delta}=\infty) = \mathbb{P}_{\rho\texttt{1}}(\tau^\oplus_{r-\delta}=\infty)$ for $\theta\in\mathbb{S}_{d-1}$.

Noting the continuity of the integral $\int_{\rho\mathbb{S}_{d-1}} \sigma_\rho(\d\theta)
|\rho{\texttt 1}-x|^{-d}f(\theta) $ in $\rho$, the proof of Theorem \ref{main1} is complete as soon as we can evaluate 
\begin{equation}
\lim_{\delta\to0}\frac{1}{\delta}\int_{r-\delta}^r \rho^{d-1} \d \rho 
\frac{\mathbb{P}_{\rho\texttt{1}}(\tau^\oplus_{r-\delta}=\infty)}{|r^2-\rho^2|^{-\alpha/2}}.
\label{completedassoonas}
\end{equation}
To this end, we need a technical lemma.

\begin{lemma}\label{lemma:escape} Let  $
D_{\alpha,d} = {\Gamma(d/2)}/{\Gamma((d - \alpha)/2)\Gamma(\alpha/2)}
$. Then 
\[
\lim_{\delta\to0}\sup_{\rho\in[r-\delta,r]} \left|(\rho^2-(r-\delta)^2)^{-\alpha/2}r^{\alpha}
 \mathbb{P}_{
 \rho{\emph {\texttt 1}   }
 }
 (\tau^\oplus_{r-\delta}=\infty) - \frac{2D_{\alpha,d}}{\alpha}\right|=0 
\]
\end{lemma}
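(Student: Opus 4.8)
\textbf{Proof plan for Lemma \ref{lemma:escape}.}
The plan is to reduce everything to the explicit Blumenthal--Getoor--Ray escape probability \eqref{BGR2} and then perform an elementary asymptotic expansion of the resulting incomplete-beta-type integral as its upper limit shrinks to zero, being careful to keep all estimates uniform in $\rho\in[r-\delta,r]$. First I would apply \eqref{BGR2} with $x=\rho\texttt{1}$ (so $|x|=\rho$) and first-passage radius $r-\delta$ in place of $r$, which gives, for every $\rho>r-\delta$,
\[
\mathbb{P}_{\rho\texttt{1}}(\tau^\oplus_{r-\delta}=\infty) = D_{\alpha,d}\int_0^{a(\rho,\delta)} (u+1)^{-d/2}u^{\alpha/2-1}\,\d u,
\qquad a(\rho,\delta):=\frac{\rho^2}{(r-\delta)^2}-1=\frac{\rho^2-(r-\delta)^2}{(r-\delta)^2}.
\]
Since $\rho\in[r-\delta,r]$ we have $0\le \rho^2-(r-\delta)^2\le r^2-(r-\delta)^2 = 2r\delta-\delta^2$, so $a(\rho,\delta)\le (2r\delta-\delta^2)/(r-\delta)^2\to 0$ as $\delta\to0$, uniformly in $\rho$.

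Next I would estimate the integral for small upper limit. Because $d\ge 1$ and $\alpha\in(0,2)$, the function $u\mapsto (u+1)^{-d/2}$ is Lipschitz on $[0,1]$ with $|(u+1)^{-d/2}-1|\le \tfrac{d}{2}u$ there, so for all $a\in[0,1]$,
\[
\Bigl|\int_0^{a}(u+1)^{-d/2}u^{\alpha/2-1}\,\d u - \tfrac{2}{\alpha}a^{\alpha/2}\Bigr|
= \Bigl|\int_0^{a}\bigl((u+1)^{-d/2}-1\bigr)u^{\alpha/2-1}\,\d u\Bigr|
\le \frac{d}{2}\int_0^a u^{\alpha/2}\,\d u = \frac{d}{\alpha+2}a^{\alpha/2+1}.
\]
Hence $\int_0^{a(\rho,\delta)}(u+1)^{-d/2}u^{\alpha/2-1}\,\d u = \tfrac{2}{\alpha}a(\rho,\delta)^{\alpha/2}\bigl(1+\theta(\rho,\delta)\bigr)$ with $|\theta(\rho,\delta)|\le \tfrac{\alpha d}{2(\alpha+2)}a(\rho,\delta)\to 0$ uniformly in $\rho\in[r-\delta,r]$.

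Finally I would substitute back. Using $a(\rho,\delta)^{\alpha/2} = (\rho^2-(r-\delta)^2)^{\alpha/2}(r-\delta)^{-\alpha}$, we get
\[
(\rho^2-(r-\delta)^2)^{-\alpha/2}r^{\alpha}\,\mathbb{P}_{\rho\texttt{1}}(\tau^\oplus_{r-\delta}=\infty)
= \frac{2D_{\alpha,d}}{\alpha}\Bigl(\frac{r}{r-\delta}\Bigr)^{\alpha}\bigl(1+\theta(\rho,\delta)\bigr),
\]
and since $(r/(r-\delta))^\alpha\to1$ and $\sup_{\rho\in[r-\delta,r]}|\theta(\rho,\delta)|\to 0$ as $\delta\to0$, the claimed uniform limit $\tfrac{2D_{\alpha,d}}{\alpha}$ follows. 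I do not expect a genuine obstacle here: the only point requiring a little care is that the error bound be uniform in $\rho$, which is immediate because $a(\rho,\delta)$ is monotone in $\rho$ and bounded above by $a(r,\delta)$.
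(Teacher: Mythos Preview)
Your proposal is correct and follows essentially the same route as the paper: both start from the explicit Blumenthal--Getoor--Ray formula \eqref{BGR2}, replace $(1+u)^{-d/2}$ by $1$ for small $u$, and then absorb the residual factor $(r/(r-\delta))^\alpha$. The only cosmetic differences are that you use the Lipschitz bound $|(1+u)^{-d/2}-1|\le \tfrac{d}{2}u$ where the paper uses the cruder endpoint bound $|(1+u)^{-d/2}-1|\le 1-(r-\delta)^d/\rho^d$, and you package the error multiplicatively as $(1+\theta(\rho,\delta))$ rather than additively; your organisation in fact makes the uniformity in $\rho$ slightly more transparent than the paper's final division step.
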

\begin{proof}Appealing to \eqref{BGR2}, we start by noting that 
\begin{align} &\sup_{\rho\in[r-\delta,r]}\left|D_{\alpha,d}\int_0^{\rho^2/(r-\delta)^2 -1}u^{\alpha/2-1}{\rm d}u - \mathbb{P}_{\rho{\texttt 1}}(\tau^\oplus_{r-\delta}=\infty)  \right| \nonumber\\
 &\hspace{2cm}\leq \sup_{\rho\in[r-\delta,r]}D_{\alpha,d}\int_0^{\rho^2/(r-\delta)^2 -1}\left|(1+ u)^{-d/2}-1\right|u^{\alpha/2-1}{\rm d}u  \nonumber\\
&\hspace{2cm}\leq \sup_{\rho\in[r-\delta,r]}D_{\alpha,d}\int_0^{\rho^2/(r-\delta)^2 -1}\left|1-\frac{(r-\delta)^d}{\rho^d}\right|u^{\alpha/2-1}{\rm d} u\notag\\
&\hspace{2cm}\leq D_{\alpha,d}\left|1-\frac{(r-\delta)^d}{r^d}\right|
\frac{2}{\alpha}\left(r^2-(r-\delta)^2\right)^{\alpha/2}(r-\delta)^{-\alpha},
\label{eq:escape1}
\end{align}
which tends to zero as $\delta\to0$. 
Furthermore,
\begin{align}&\sup_{\rho\in[r-\delta,r]}\left|D_{\alpha,d}\int_0^{\rho^2/(r-\delta)^2 -1}u^{\alpha/2-1}{\rm d}u-\frac{2 D_{\alpha,d}}{\alpha}(\rho^2-(r-\delta))^{\alpha/2}r^{-\alpha}\right|\nonumber\\
&=\sup_{\rho\in[r-\delta,r]}\frac{2 D_{\alpha,d}}{\alpha}(\rho^2-(r-\delta)^2 )^{\alpha/2}\left|(r-\delta)^{-\alpha}-r^{-\alpha}\right|\nonumber\\
&\leq \frac{2 D_{\alpha,d}}{\alpha}(r^2-(r-\delta))^{\alpha/2}\left|(r-\delta)^{-\alpha}-r^{-\alpha}\right|, \label{eq:escape2}
\end{align}
which also tends to zero as $\delta\to0$.
Summing \eqref{eq:escape1} and \eqref{eq:escape2} in the context of the  triangle inequality and dividing by $r^{-\alpha}(r^2-(r-\delta))^{\alpha/2}$ we can also deduce that
\[
\lim_{\delta\to0}\sup_{\rho\in[r-\delta,r]}\left|(\rho^2-(r-\delta)^2)^{-\alpha/2}r^{\alpha}
 \mathbb{P}_{
 \rho{ \texttt 1} 
 }
 (\tau^\oplus_{r-\delta}=\infty)- \frac{2D_{\alpha,d}}{\alpha}\right|=0,
\]
and the lemma is proved. 
\end{proof}

We are now ready to prove \eqref{completedassoonas}, and identify its limit, thereby completing the proof of Theorem \ref{main1}.
Appealing to Lemma \ref{lemma:escape}, for all  $\varepsilon>0$, there exists a $\delta$ sufficiently small,
\begin{align}
&\left|\frac{1}{\delta}\int_{r-\delta}^{r}{\rm d} \rho \frac{\mathbb{P}_{\rho{\texttt 1}} (\tau^\oplus_{r-\delta}=\infty)}{(r^2-\rho^2)^{\alpha/2}}  -\frac{2D_{\alpha,d}r^{-\alpha}}{\alpha}\frac{1}{\delta}\int_{r-\delta}^{r} {\rm d} \rho \frac{(\rho^2-(r-\delta)^2)^{\alpha/2}}{(r^2-\rho^2)^{\alpha/2}} \right|
\nonumber\\
&\hspace{8cm}
<\frac{\varepsilon}{\delta}\int_{r-\delta}^{r} {\rm d} \rho \frac{(\rho^2-(r-\delta)^2)^{\alpha/2}}{(r^2-\rho^2)^{\alpha/2}}.\label{ineq:A}
\end{align}
Next note that  
\begin{align}
\lim_{\delta\to0}\frac{1}{\delta}\int_{r-\delta}^{r} {\rm d} \rho \frac{(\rho^2-(r-\delta)^2)^{\alpha/2}}{(r^2-\rho^2)^{\alpha/2}}\notag
&= \lim_{\delta\to 0}\frac{1}{\delta}\int_{r-\delta}^{r} {\rm d} \rho \left[ \frac{\rho-(r-\delta)}{r-\rho}  \right]^{\alpha/2}\left[ \frac{\rho+(r-\delta)}{r+\rho}  \right]^{\alpha/2}\notag\\
%Next, use substitution $\rho=(r-\delta)+u\delta$ $u\in[0,1]$, this becomes,
&=\lim_{\delta\to 0}\int_{0}^{1} {\rm d} u \left[ \frac{u}{1-u}  \right]^{\alpha/2}\left[ \frac{2r-2\delta+\delta u}{2r-\delta+\delta u}  \right]^{\alpha/2}\notag\\
&=\int_{0}^{1} {\rm d} u (1-u)^{-\alpha/2}u^{\alpha/2} \notag\\
&=\Gamma(1-\alpha/2)\Gamma(1+\alpha/2),
\label{doublegamma}
\end{align}
where we have used the substitution $\rho=(r-\delta)+u\delta$ in the second equality and dominated convergence in the third.

Putting the pieces together, we can take limits in \eqref{ineq:A}, using  \eqref{doublegamma}, to deduce that 
\[
\lim_{\delta\to0}\frac{1}{\delta}\int_{r-\delta}^{r}{\rm d} \rho \frac{\mathbb{P}_{\rho{\texttt 1}} (\tau^\oplus_{r-\delta}=\infty)}{(r^2-\rho^2)^{\alpha/2}}  =\frac{2}{\alpha}D_{\alpha,d} \Gamma(1-\alpha/2)\Gamma(1+\alpha/2)r^{-\alpha}
\]
which, in turn, can be plugged  into  \eqref{ifexists} and we find that 
\begin{align*}
\lim_{\delta\to0}\Delta^\delta_rf(x) &= \frac{2}{\alpha}D_{\alpha,d}\Gamma(1-\alpha/2)\Gamma(1+\alpha/2)C_{\alpha,d}
r^{d-\alpha-1}
|r^2-|x|^2|^{\alpha/2}
\int_{r\mathbb{S}_{d-1}} \sigma_\rho(\d\theta)
|r\theta-x|^{-d}f(\theta)\\
&= \pi^{-d/2}\frac{\Gamma(d/2)^2}{\Gamma((d - \alpha)/2)\Gamma(\alpha/2)}
r^{d-\alpha-1}
|r^2-|x|^2|^{\alpha/2}
\int_{r\mathbb{S}_{d-1}} \sigma_\rho(\d\theta)|r\theta-x|^{-d}f(\theta).
\end{align*}
Now suppose that $g$ is another bounded measurable function on $[0,\infty)$, then 
\begin{align*}
&\mathbb{E}_x[g(|X_{{\texttt G}(\infty)}|)f(\arg(X_{{\texttt G}(\infty)}))]\\
&= \pi^{-d/2}\frac{\Gamma(d/2)^2}{\Gamma((d - \alpha)/2)\Gamma(\alpha/2)}
\int_0^{|x|}\int_{r\mathbb{S}_{d-1}}  r^{d-1} \d r\sigma_\rho(\d\theta)
\frac{|r^2-|x|^2|^{\alpha/2}}
{r^\alpha|r\theta-x|^{d}}f(\theta)g(r)\\
&= \pi^{-d/2}\frac{\Gamma(d/2)^2}{\Gamma((d - \alpha)/2)\Gamma(\alpha/2)}\int_{|y|<|x|}\frac{||y|^2-|x|^2|^{\alpha/2}}
{|y|^\alpha|y-x|^{d}}f(\arg(y))g(|y|)\d y,
\end{align*}
which is equivalent to the statement of Theorem \ref{main1}.
 \hfill$\square$

\section{Riesz--Bogdan--\.Zak transform and MAP duality}\label{sectionRBZ}
Recently, Bogdan and \.Zak \cite{BZ}  used an idea of Riesz from classical potential analysis to 
understand the relationship between a stable process and its transformation through a simple sphere inversion. (See also Alili et al. \cite{ACGZ} and Kyprianou \cite{Deep1}).
Suppose we write $Kx = x/|x|^2$, $x\in\mathbb{R}^d$ for the classical inversion of space through the sphere $\mathbb{S}_{d-1}$. Then in dimension $d\geq 2$,  Bogdan and \.Zak \cite{BZ} prove that, for $x\neq 0$, $(KX_{\eta(t)}, t\geq 0)$ under $
\mathbb{P}_{Kx}$ is equal in law to $(X_t, t\geq 0)$ under $\mathbb{P}^\circ_{x}$, where
\begin{equation}
\left.\frac{\d \mathbb{P}^\circ_x}{\d \mathbb{P}_x}\right|_{\sigma(X_s: s\leq t)} = \frac{|X_t|^{\alpha-d}}{|x|^{\alpha-d}}, \qquad t\geq 0
\label{COM}
\end{equation}
and $\eta(t) = \inf\{s>0: \int_0^s|X_u|^{-2\alpha}\d u>t\}$. 
It was shown in Kyprianou et al. \cite{KRS} that $\mathbb{P}^\circ_x$, $x\in\mathbb{R}^d\backslash\{0\}$ can be understood, in the appropriate sense, as the stable process conditioned to be continuously absorbed at the origin. Indeed, as far as the underlying MAP $(\xi, \Theta)$ is concerned, we see that $-{\rm i}(\alpha- d)$ is a root of the exponent \eqref{a} and the change of measure \eqref{COM} corresponds to an Esscher transform of the L\'evy process $\xi$, rendering it a process which drifts to $-\infty$. Thus, an application of the optimal stopping theorem shows that  \eqref{COM} is equivalent to the change of measure for $\xi$
\begin{equation}
\left.\frac{\d \mathbf{P}^\circ_{x, \theta}}{\d \mathbf{P}_{x,\theta}}\right|_{\sigma((\xi_s, \Theta_s): s\leq t)} = {\rm e}^{(\alpha-d)(\xi_t-x)}, \qquad t\geq 0
\label{COMMAP}
\end{equation}

Following the reasoning in the one-dimensional case in \cite{ACGZ, Deep1}, it is not difficult to show that  the space-time transformed process $(KX_{\eta(t)}, t\geq 0)$ is the Lamperti--Kiu transform of the MAP $(-\xi, \Theta)$. Therefore, at the level of MAPs, the Riesz--Bogdan--\.Zak transform says that $(\xi, \Theta)$ under the change of measure \eqref{COMMAP}, when issued from $(\log|x|, \arg(x))$, $x\in\mathbb{R}$, is equal in law to $(-\xi, \Theta)$ when issued from $(-\log|x|, \arg(x))$.

An interesting consequence of this is that the Riesz--Bogdan--\.Zak transform provides an efficient way to analyse radial ascending properties of $X$, where previously we have studied its descending properties. That is to say, it offers the opportunity to study aspects of  the process $(H^+, \Theta^+)$. A good case in point in this respect is the analogue of the potential $U^-_x(\d y)$, $|y|<|x|$.

For convenience, note from Theorem \ref{main1} and \eqref{Ef(X_G)} that establishing the law of $X_{{\texttt G}(\infty)}$ is equivalent to obtaining an explicit identity for $U^-_x(\d y)$, $|y|<|x|$ and this we have already done. Specifically, for all $|x|>0$,
\begin{equation}
U^-_x(\d y) =  \pi^{-d/2}\frac{\Gamma(d/2)^2}{\Gamma((d - \alpha)/2)\Gamma(\alpha/2)}\frac{||y|^2-|x|^2|^{\alpha/2}}
{|y|^\alpha|y-x|^{d}}, \qquad |y|<|x|.
\label{U-}
\end{equation}
On the other hand, recalling that $\lim_{t\to\infty}|X_t| = \infty$, which implies that $\lim_{t\to\infty}\xi_t = \infty$ and hence $L_\infty = \infty$, we define
\[
U^+_x(\d z) =\int_0^{\infty} \mathbf{P}_{\log |x|,\arg(x)}({\rm e}^{H^+_t}\Theta^+_t\in \d z) \d t, \qquad |z|\geq|x|.
\]
Then the Riesz--Bogdan--\.Zak transform ensures that, for Borel $A\subseteq\{z\in\mathbb{R}^d: |z|<|x|\}$, 
\[
\frac{|z|^{\alpha-d}}{|x|^{\alpha-d}}\mathbf{P}_{\log |x|,\arg(x)}({\rm e}^{H^+_t}\Theta^+_t\in A)=\mathbf{P}_{-\log |x|,\arg(x)}({\rm e}^{-H^-_t}\Theta^-_t\in KA, t< \ell_{\infty})
\]
where $KA = \{Kz: z\in A\}$.
Hence, for $|x|>0$,
\begin{align}
U^+_x(\d z)& = \pi^{-d/2}\frac{\Gamma(d/2)^2}{\Gamma((d - \alpha)/2)\Gamma(\alpha/2)}\frac{||z|^{-2}-|x|^{-2}|^{\alpha/2}}
{|z|^{-\alpha}\left|({z}/{|z|^2})-({x}/{|x|^2})\right|^{d}}
\frac{|x|^{\alpha-d}}{|z|^{\alpha-d}}\frac{\d z}{|z|^{2d}}\notag\\
&= \pi^{-d/2}\frac{\Gamma(d/2)^2}{\Gamma((d - \alpha)/2)\Gamma(\alpha/2)}
\frac{||z|^2-|x|^2|^{\alpha/2}}
{|z|^\alpha|x-z|^{d} }, \qquad |z|>|x|.
\label{dualpotential}
\end{align}
where we have used the fact that $\d y = |z|^{-2d}\d z$, when $y = Kz$, and 
\begin{equation}
|Kx - Kz| = \frac{|x-z|}{|x||z|}.
\label{Kz-Kz}
\end{equation}

One notices that the identities for  the potential measures $U^-_x(\d z)$ and $U^+_x(\d z)$ are identical albeit that the former is supported on $|z|<|x|$ and the latter on $|z|>|x|$. These identities and, more generally, the duality that emerges through the Riesz--Bogdan--\.Zak transformation will be of use to us in due course. 

\section{Integration with respect to the excursion measure}\label{IntegrationN}

In order to proceed with some of the other fluctuation identities and the deep factorisation, we need to devote some time to compute in explicit detail the excursion occupation functionals
\begin{equation}
\mathbb{N}_\theta\left(\int_0^\zeta g({\rm e}^\epsilon(s)\Theta^\epsilon(s))\d s\right),\qquad \theta\in\mathbb{S}_{d-1},
\label{N1}
\end{equation}
and the excursion overshoot
\begin{equation}
\mathbb{N}_\theta\left(f({\rm e}^{\epsilon(\zeta)}\Theta^\epsilon(\zeta) );\,\zeta<\infty\right),\qquad \theta\in\mathbb{S}_{d-1},
\label{N2}
\end{equation}
for judicious choices of $f$ and $g$ that ensure  these quantities are finite.

The way we do this is to use Lemma \ref{Lemma1} to scale out the quantity of interest from a fluctuation identity in which it is placed together with the potential $U^-_x$. Let us start with the excursion overshoot in \eqref{N2}. 

\begin{proposition}\label{Nzeta}for $\theta\in\mathbb{S}_{d-1}$, we have 
\begin{align*}
&\mathbb{N}_{\theta}\left({\rm e}^{\epsilon(\zeta)}\Theta^\epsilon(\zeta) \in \d y; \zeta<\infty\right) \\
&\hspace{2cm}= 
\frac{\alpha\pi^{-d/2}}{2}
 \frac{ \Gamma((d - \alpha)/2)}{\Gamma(1-\alpha/2)}
|1-|y|^2|^{-\alpha/2}|\theta-y|^{-d}\d y, \qquad |y|\leq 1.
\end{align*}
\end{proposition}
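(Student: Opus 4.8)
The plan is to extract $\mathbb{N}_\theta$ from an identity involving the potential $U^-_x$ by the same ``scaling out'' trick used in Lemma~\ref{Lemma1}, combined with the explicit formula \eqref{U-}. The starting point is a two-point version of the exit formula \eqref{exitsystem1}, analogous to \eqref{two-law}: for $|x|>1$ and bounded continuous $f,g$ on $\mathbb{R}^d$,
\[
\mathbb{E}_x\big[g(X_{{\texttt G}(\tau_1^\oplus)})f(X_{\tau^\oplus_1}/|X_{{\texttt G}(\tau_1^\oplus)}|); \tau^\oplus_1<\infty\big]
= \int_{1<|z|<|x|}U^-_x(\d z)\,g(z)\,\mathbb{N}_{\arg(z)}\big({\rm e}^{\epsilon(\zeta)}\Theta^\epsilon(\zeta)\in\d y; \zeta<\infty\big)f(y),
\]
where the excursion straddling $\tau^\oplus_1$ starts at height $\log|z|$, so that $X_{\tau^\oplus_1} = |z|\,{\rm e}^{\epsilon(\zeta)}\Theta^\epsilon(\zeta)$ with $|{\rm e}^{\epsilon(\zeta)}\Theta^\epsilon(\zeta)|<1/|z|$. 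On the left-hand side, I would use the triple law of Blumenthal--Getoor--Ray--type identities already available; concretely, Corollary~\ref{double1}(i) with $r=1$ gives the joint law of $(X_{{\texttt G}(\tau_1^\oplus)}, X_{\tau^\oplus_1})$ explicitly as $\chi_x(z,\bullet,v)\,\d z\,\d v$. Wait --- Corollary~\ref{double1} is proved later using this proposition, so to avoid circularity I should instead run the argument the other way: use the $n$-tuple identity \eqref{two-law} together with \emph{only} the already-established formula \eqref{U-} for $U^-_x$ and the first-entrance law \eqref{BGR}, and solve for $\mathbb{N}_{\arg(z)}$.

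The cleaner route, and the one I would actually carry out, is to differentiate a one-parameter family of such identities in the radius, exactly as in Section~\ref{proofofmain1}. Fix $|x|>r$ and consider
\[
\frac{1}{\delta}\,\mathbb{E}_x\big[g(X_{{\texttt G}(\tau_r^\oplus)})\,f(X_{\tau^\oplus_r})\,; \tau^\oplus_r<\infty,\ |X_{{\texttt G}(\tau_r^\oplus)}|\in(r,r+\delta)\big].
\]
Conditioning on $X_{\tau_{r+\delta}^\oplus}$ via \eqref{BGR}, then applying \eqref{two-law} at level $r$ and using Lemma~\ref{Lemma1} to replace the inner integral $\int U^-_y(\d z)(\cdots)$ by its value times $f(\arg(y))$, one isolates a factor of the form $\mathbb{P}_y(\tau^\oplus_r=\infty)$; letting $\delta\to0$ and invoking Lemma~\ref{lemma:escape} turns this into the density computed in Theorem~\ref{main1}. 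What survives in the limit is precisely $U^-_x(\d z)$ (known, by \eqref{U-}) multiplied by $\mathbb{N}_{\arg(z)}({\rm e}^{\epsilon(\zeta)}\Theta^\epsilon(\zeta)\in\d y;\zeta<\infty)$. Matching this against the explicit limiting density --- which is the integrand appearing in the last display of the proof of Theorem~\ref{main1}, i.e.\ $\pi^{-d/2}\frac{\Gamma(d/2)^2}{\Gamma((d-\alpha)/2)\Gamma(\alpha/2)}\,r^{d-\alpha-1}|r^2-|x|^2|^{\alpha/2}|r\theta-x|^{-d}$ tensored against the appropriate entrance law --- and dividing through by $U^-_x(\d z)$ from \eqref{U-} leaves exactly
\[
\mathbb{N}_\theta\big({\rm e}^{\epsilon(\zeta)}\Theta^\epsilon(\zeta)\in\d y;\zeta<\infty\big)
= \frac{\alpha\pi^{-d/2}}{2}\,\frac{\Gamma((d-\alpha)/2)}{\Gamma(1-\alpha/2)}\,|1-|y|^2|^{-\alpha/2}\,|\theta-y|^{-d}\,\d y,\qquad |y|\le1.
\]
The constant is pinned down by bookkeeping the gamma factors from \eqref{U-}, from $C_{\alpha,d}$ and $D_{\alpha,d}$ in Section~\ref{proofofmain1}, and from the $\Gamma(1-\alpha/2)\Gamma(1+\alpha/2)$ produced by \eqref{doublegamma}, using $\alpha\Gamma(\alpha/2)=2\Gamma(1+\alpha/2)$ to simplify.

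I expect the main obstacle to be justifying the ``disintegration'' step rigorously: the exit formula \eqref{exitsystem1} produces the product $U^-_x(\d z)\otimes\mathbb{N}_{\arg(z)}(\cdots)$ only after one knows that the contribution of the excursion straddling $\tau_r^\oplus$ separates cleanly from the pre-$\tau_r^\oplus$ path, and that the local time normalisation (the choice making $\ell_\infty$ unit-mean exponential, hence $\Phi^-(0)=1$) is consistently used so that $U^-_x$ really is the occupation measure with \emph{no} extra multiplicative constant. A secondary technical point is interchanging the $\delta\to0$ limit with the $\d z$ and $\d y$ integrations, which needs a dominated-convergence argument uniform in $\arg(z)$ --- here isotropy reduces everything to the ``North Pole'' case $z=\rho{\texttt 1}$, and the bounds from Lemma~\ref{lemma:escape} and the right-continuity estimate in Lemma~\ref{Lemma1} supply the needed domination. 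Once the product structure is in hand, the identification of $\mathbb{N}_\theta$ is forced, since $U^-_x(\d z)$ is strictly positive on $\{1<|z|<|x|\}$, and the stated formula follows by division and simplification of the gamma constants.
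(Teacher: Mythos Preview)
Your ingredients are right --- the exit formula \eqref{two-law}, Lemma~\ref{Lemma1}, and the BGR identities are exactly what the paper uses --- but your implementation is more elaborate than needed, and the ``divide through by $U^-_x(\d z)$'' step is where it wobbles. With $r$ fixed and the shell $|X_{\texttt{G}(\tau^\oplus_r)}|\in(r,r+\delta)$ shrinking, the limit does not concentrate the $z$-integral at a single point but at the whole sphere $r\mathbb{S}_{d-1}$; you end up with a spherical integral $\int_{r\mathbb{S}_{d-1}} u^-_x(z)\,g(z)\,\mathbb{N}_{\arg(z)}(\cdots)\,\sigma_r(\d z)$, and extracting $\mathbb{N}_\theta$ from it requires knowing $u^-_x$ explicitly \emph{and} arguing that varying $g$ over the sphere determines the integrand pointwise. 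That is doable, but it is precisely the disintegration obstacle you flagged, and it leans on Theorem~\ref{main1} via \eqref{U-}. Your conditioning on $X_{\tau^\oplus_{r+\delta}}$ also needs care: entry into the ball of radius $r+\delta$ may already land inside the ball of radius $r$, so the event $\{|X_{\texttt{G}(\tau^\oplus_r)}|\in(r,r+\delta)\}$ does not factor as cleanly as you suggest.

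The paper sidesteps all of this by taking the limit in the other parameter: keep $x$ fixed and let $r\uparrow|x|$. Then the domain $\{r<|z|<|x|\}$ of the $U^-_x$-integral in \eqref{subin} shrinks to the single point $x$, and Lemma~\ref{Lemma1} (applied with the continuous function $z\mapsto\mathbb{N}_{\arg(z)}(f(|z|{\rm e}^{\epsilon(\zeta)}\Theta^\epsilon(\zeta));\zeta<\infty)$) gives directly
\[
\mathbb{N}_{\arg(x)}\big(f(|x|{\rm e}^{\epsilon(\zeta)}\Theta^\epsilon(\zeta));\zeta<\infty\big)
=\lim_{r\uparrow|x|}\frac{\mathbb{E}_x[f(X_{\tau^\oplus_r});\,\tau^\oplus_r<\infty]}{\mathbb{P}_x(\tau^\oplus_r=\infty)}.
\]
Both numerator and denominator are explicit from Theorem~\ref{BGRtheorem}, and the limit of the ratio $(|x|^2-r^2)^{\alpha/2}\big/\int_0^{(|x|^2-r^2)/r^2}(u+1)^{-d/2}u^{\alpha/2-1}\d u$ is elementary. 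No explicit formula for $U^-_x$ is needed, no auxiliary $\delta$-shell is introduced, and no spherical integral has to be inverted. Your gamma-constant bookkeeping is correct, but the route to get there is a single ratio limit, not two nested ones.
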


\begin{proof}Take $|x|>r>r_0>0$ and suppose that $f: \mathbb{R}^d\mapsto[0,\infty)$ is continuous with support which is compactly embedded in the  ball of radius $r_0$.
 We have, on the one hand, from \eqref{BGR}, the identity
\[
\mathbb{E}_x[f(X_{\tau_r^\oplus}); \tau^\oplus_r<\infty]=\pi^{-(d/2+1)}\Gamma\left({d}/{2} \right)\sin\left(\frac{\pi\alpha}{2}\right) 
\int_{|y|<r} \frac{|r^2-|x|^2|^{\alpha/2}}{|r^2-|y|^2|^{\alpha/2}}|x-y|^{-d}f(y){\rm d} y.
\]
On the other hand, from \eqref{two-law}, we also have 
\begin{equation}
\mathbb{E}_x[f(X_{\tau^\oplus_r}); \tau^\oplus_r<\infty]\notag\\
=\int_{r<|z|<|x|}U^-_x(\d z)\int_{|y||z|<r}\mathbb{N}_{\arg(z)}(f(|z|{e}^{\epsilon(\zeta)}\Theta^\epsilon(\zeta)); \zeta<\infty).
\label{subin}
\end{equation}
Note that, for each $z\in\mathbb{R}^d\backslash\{0\}$, 
\[
z\mapsto\mathbb{N}_{\arg(z)}(f(|z|{e}^{\epsilon(\zeta)}\Theta^\epsilon(\zeta)); \zeta<\infty)
\]
is bounded thanks to the fact that $f$ is bounded and its support is compactly embedded in the unit ball or radius $r_0$. Indeed, there exists an $\varepsilon>0$, which depends only on the support of $f$, such that
\[
\sup_{r<|z|<|x|}\left|\mathbb{N}_{\arg(z)}(f(|z|{e}^{\epsilon(\zeta)}\Theta^\epsilon(\zeta)); \zeta<\infty)\right|\leq ||f||_\infty \nu(-\log (r_0-\varepsilon),\infty)<\infty.
\]

Moreover, since we can write 
\begin{equation}
\label{cts}\mathbb{N}_{\arg(z)}(f(|z|{e}^{\epsilon(\zeta)}\Theta^\epsilon(\zeta)); \zeta<\infty)
=\mathbb{N}_{\texttt{1}}(f(|z|{e}^{\epsilon(\zeta)}\Theta^\epsilon(\zeta)\star\arg(z)); \zeta<\infty),
\end{equation}
where, for any $a\in\mathbb{S}_{d-1}$,   the operation $\star \, a$  rotates the sphere so that the `North Pole', $\texttt{1} = (1,0,\cdots,0)\in\mathbb{S}_{d-1}$ moves to $a$. Using a straightforward dominated convergence argument, we see that $\mathbb{N}_{\arg(z)}(f(|z|{e}^{\epsilon(\zeta)}\Theta^\epsilon(\zeta)); \zeta<\infty)$ is continuous in $z$ thanks to the continuity of $f$.

Appealing to Lemma \ref{Lemma1}, we thus have that 
\begin{align*}
&\mathbb{N}_{\arg(x)}(f(|x|{e}^{\epsilon(\zeta)}\Theta^\epsilon(\zeta)); \zeta<\infty)\\
&\hspace{2cm}=\lim_{r\uparrow|x|}\frac{\int_{r<|z|<|x|}U^-_x(\d z)\int_{|y||z|<r}\mathbb{N}_{\arg(z)}(f(|z|{e}^{\epsilon(\zeta)}\Theta^\epsilon(\zeta)); \zeta<\infty)}{\int_{r<|z|\leq |x|}U^-_x(\d z)}\\
&\hspace{2cm}=\lim_{r\uparrow|x|}\frac{\mathbb{E}_x[f(X_{\tau^\oplus_r}); \tau^\oplus_r<\infty]}{\mathbb{P}_x(\tau^\oplus_r = \infty)}.
\end{align*}
Substituting in the analytical form of the ratio on the right-hand side above using \eqref{subin} and \eqref{BGR2}, we may continue with 
\begin{align}
&\mathbb{N}_{\arg(x)}(f(|x|{e}^{\epsilon(\zeta)}\Theta^\epsilon(\zeta)); \zeta<\infty)\notag\\
&= \lim_{r\uparrow|x|}\pi^{-d/2}
 \frac{ \Gamma((d - \alpha)/2)}{\Gamma(1-\alpha/2)}
\frac{ %{\Gamma((d - \alpha)/2)}
(|x|^2-r^2)^{\alpha/2}\int_{|y|<r} |r^2-|y|^2|^{-\alpha/2}|x-y|^{-d}f(y){\rm d} y}{ 
%{\Gamma(d/2)\Gamma(1-\alpha/2)}
\int_0^{(|x|^2-r^2)/r^2} (u+1)^{-d/2}u^{\alpha/2-1}\d u}\notag\\
&=\pi^{-d/2}
 \frac{ \Gamma((d - \alpha)/2)}{\Gamma(1-\alpha/2)}
\int_{|y|<|x|} ||x|^2-|y|^2|^{-\alpha/2}|x-y|^{-d}f(y){\rm d} y\notag\\
&\hspace{7cm}\times\lim_{r\uparrow|x|}\frac{r^\alpha[(|x|^2-r^2)/r^2]^{\alpha/2}}{
\int_0^{(|x|^2-r^2)/r^2} (u+1)^{-d/2}u^{\alpha/2-1}\d u}\notag\\
&=\frac{\alpha\pi^{-d/2}}{2}
 \frac{ \Gamma((d - \alpha)/2)}{\Gamma(1-\alpha/2)}
\int_{|y|<|x|}|x|^\alpha ||x|^2-|y|^2|^{-\alpha/2}|x-y|^{-d}f(y){\rm d} y,
\label{morespecifically}
\end{align}
where we have used that the support of $f$ is compactly embedded in the ball of radius $|x|$ to justify the first term in the second equality.
\end{proof}

\medskip

Next we turn our attention to the quantity \eqref{N1}. Once again, our approach will be to scale an appropriate fluctuation identity by $\mathbb{P}_x(\tau^\oplus_r = \infty) = \int_{r<|z|\leq |x|}U^-_x(\d z)$. In this case, the natural object to work with is the expected occupation measure until first entry into the ball of radius $r<|x|$, where $x$ is the point of issue of the stable process. That is, the quantity
\begin{equation}
\mathbb{E}_x\left[\int_0^{\tau^\oplus_r}f(X_s) \d s\right] %= : \int_{|y|>r} h^\oplus_r(x,\d y) f(y),
\label{resolvent}
\end{equation}
for $|x|>r>0$ and continuous $f:\mathbb{R}^d\mapsto[0,\infty)$ with compact support. Although an identity for the aforesaid resolvent is not readily available in the literature, it is not difficult to derive it from \eqref{BGR3}, with the help of  the Riesz--Bogdan--\.Zak transform. Recall that this transform states that, for $x\neq 0$, $(KX_{\eta(t)}, t\geq 0)$ under $
\mathbb{P}_{Kx}$ is equal in law to $(X_t, t\geq 0)$ under $\mathbb{P}^\circ_{x}$, where
 $\eta(t) = \inf\{s>0: \int_0^s|X_u|^{-2\alpha}\d u>t\}$.

For convenience, set $r=1$. Noting that, since $\int_0^{\eta(t)}|X_u|^{-2\alpha}\d u=t$, if we write $s = \eta(t)$, then 
\[
|X_{s}|^{-2\alpha}\d s =\d t, \qquad t>0,
\]
and hence we have that, for $|x|>1$,
\begin{align*}
 \int_{|z|>1}\frac{|z|^{\alpha-d}}{|x|^{\alpha -d}}h^\oplus_r(x,z)f(z)\d z&= \mathbb{E}^\circ_{x}\left[\int_0^{\tau^{\oplus}_1}f(X_t)\d t\right]\\
&=\mathbb{E}_{Kx}\left[\int_0^{\tau^\ominus_1} f(KX_{\eta(t)})% |X_s|^{2\alpha}
\d t\right] \\
&=\mathbb{E}_{Kx}\left[\int_0^{\tau^\ominus_1} f(KX_{s}) |X_s|^{-2\alpha}
\d s\right] \\
&=\int_{|y|<1} h^\ominus_1(Kx,y)f(K y)|y|^{-2\alpha}\d y
\end{align*}
where we have pre-emptively assumed that the resolvent associated to \eqref{resolvent} has a density, which we have denoted by $h^\oplus_1 (x,y)$. In the integral on the left-hand side above, we can make the change of variables $y= Kz$, which is equivalent to $z= Ky$. Noting that $\d y = \d z/{|z|^{2d}}$ and appealing to \eqref{BGR3}, we get
\begin{align*}
 \int_{|y|>1}\frac{|z|^{\alpha-d}}{|x|^{\alpha -d}}h^\oplus_1(x,z)f(z)\d z&=
 \int_{|z|>1} h^\ominus_1(Kx,Kz)f(z)%|K_r z|^{2\alpha}
  \frac{|z|^{2\alpha}}{|z|^{2d}}\d z,
 \end{align*}
from which we can conclude that, for $|x|, |z|>1$,
\begin{align*}
h^\oplus_1(x,z)& = \frac{|x|^{\alpha -d}}{|z|^{\alpha-d}}h^\ominus_1(Kx,K z) \frac{ |z|^{2\alpha}}{|z|^{2d}}\notag\\
&=2^{-\alpha}\pi^{-d/2}\frac{\Gamma(d/2)}{\Gamma(\alpha/2)^2}\frac{|x|^{\alpha -d}}{|z|^{\alpha-d}}
\frac{ |z|^{2\alpha}}{|z|^{2d}}
|Kx-Kz|^{\alpha - d} \int_0^{\zeta^\ominus_1(Kx,Kz)}  (u+1)^{-d/2}u^{\alpha/2-1}\d u.
\end{align*}
Hence, after a little algebra, for $|x|, |z|>1$,
\begin{equation*}
h^\oplus_1(x,z)=2^{-\alpha}\pi^{-d/2}\frac{\Gamma(d/2)}{\Gamma(\alpha/2)^2}
|x-z|^{\alpha - d}
 \int_0^{\zeta^\oplus(x,z)}  (u+1)^{-d/2}u^{\alpha/2-1}\d u
\end{equation*}
where we have again used the fact that 
$|Kx - Kz| = |x-z|/|x||z|$ so that 
\[
 \zeta^\ominus_1(Kx,Kz)=(|x|^2-1)(|z|^{2}-1)/|x-z|^2 =: \zeta^\oplus_1(x,z).
\]
After scaling this gives us a general formula for \eqref{resolvent}, which we record below as a lemma on account of the fact that it does not already appear elsewhere in the literature (albeit being implicitly derivable as we have done from \cite{BGR}). 
\begin{lemma}\label{resolventin} For $|x|>r$, the resolvent  \eqref{resolvent} has a density given by 
\begin{equation}
h^\oplus_r(x,z) =2^{-\alpha}\pi^{-d/2}\frac{\Gamma(d/2)}{\Gamma(\alpha/2)^2}
|x-z|^{\alpha - d}
 \int_0^{\zeta^\oplus_r(x,z)}  (u+1)^{-d/2}u^{\alpha/2-1}\d u,
 \label{minustoplus}
\end{equation}
where $ \zeta^\oplus_r(x,z):= (|x|^2-r^2)(|z|^2-r^2)/r^2|x-z|^2$.
\end{lemma}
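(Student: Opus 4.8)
The plan is to treat the lemma as essentially a scaling corollary of the $r=1$ computation that precedes its statement. That computation, via the Riesz--Bogdan--\.Zak transform \cite{BZ} applied to the Blumenthal--Getoor--Ray resolvent \eqref{BGR3}, already produces the explicit density $h^\oplus_1(x,z)$ of the killed resolvent \eqref{resolvent} with $r=1$, for $|x|,|z|>1$; it also shows \emph{a fortiori} that this resolvent is absolutely continuous, since the time change $s\mapsto\eta(s)$ and the sphere inversion $z\mapsto Kz$ entering the transform both preserve absolute continuity while $h^\ominus_1$ is a bona fide density by \eqref{BGR3}. So the only remaining task is to pass from $r=1$ to general $r>0$ using the scaling property \eqref{1/a}, and to check that the explicit formula transforms into \eqref{minustoplus}.

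For the scaling step, I would fix $c>0$ and a continuous, compactly supported $g\geq0$ on $\mathbb{R}^d$. By \eqref{1/a}, under $\mathbb{P}_x$ the process $(cX_{c^{-\alpha}t})_{t\geq0}$ has law $\mathbb{P}_{cx}$, and since $|cX_{c^{-\alpha}t}|<cr$ precisely when $|X_{c^{-\alpha}t}|<r$, the first entrance time into the ball of radius $cr$ along this time-changed path equals $c^\alpha\tau^\oplus_r$ of the original path. Substituting $u=c^{-\alpha}s$ in the occupation integral and then $z\mapsto z/c$ in the density representation of the $X$-resolvent yields
\[
\mathbb{E}_{cx}\!\left[\int_0^{\tau^\oplus_{cr}}\! g(X_s)\,\d s\right]
= c^\alpha\,\mathbb{E}_x\!\left[\int_0^{\tau^\oplus_r}\! g(cX_u)\,\d u\right]
= c^{\alpha-d}\!\int_{\mathbb{R}^d} h^\oplus_r(x,z/c)\,g(z)\,\d z,
\]
and comparing with $\int_{\mathbb{R}^d} h^\oplus_{cr}(cx,z)g(z)\,\d z$ gives the scaling relation $h^\oplus_{cr}(cx,z)=c^{\alpha-d}h^\oplus_r(x,z/c)$. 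Here the interchange of expectation and integration and the finiteness of all quantities are justified by transience of $X$ for $d\geq2$: the full Riesz potential $\mathbb{E}_x[\int_0^\infty g(X_s)\,\d s]$ is finite for compactly supported $g$ and dominates the killed resolvent; and since densities are a priori defined only up to Lebesgue-null sets, I would fix the continuous version supplied by the explicit $r=1$ formula so that the relation holds pointwise.

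Finally, taking $r=1$ in the scaling relation and relabelling $c\to r$, $x\to x/r$ gives $h^\oplus_r(x,z)=r^{\alpha-d}h^\oplus_1(x/r,z/r)$ for $|x|,|z|>r$. Inserting the $r=1$ formula, the factor $r^{\alpha-d}$ cancels against the factor $r^{d-\alpha}$ arising from $|x/r-z/r|^{\alpha-d}=r^{d-\alpha}|x-z|^{\alpha-d}$, while a one-line simplification gives $\zeta^\oplus_1(x/r,z/r)=(|x|^2-r^2)(|z|^2-r^2)/r^2|x-z|^2=\zeta^\oplus_r(x,z)$, so the upper limit of the integral is correct; this is precisely \eqref{minustoplus}. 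I do not expect a genuine obstacle in this lemma: the substantive work — the Riesz--Bogdan--\.Zak identification of $h^\oplus_1$ with a transformed $h^\ominus_1$ — is already carried out above, and what remains is careful bookkeeping of the powers of $r$ together with the elementary measure-theoretic justifications just indicated.
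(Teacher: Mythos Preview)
Your proposal is correct and follows exactly the paper's approach: the Riesz--Bogdan--\.Zak computation for $r=1$ is carried out in the text immediately preceding the lemma, and the paper then simply states ``After scaling this gives us a general formula'' before recording the lemma. You have supplied the details of that scaling step, which the paper omits, but the method is identical.
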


We can now use the above lemma to compute occupation potential with respect to the excursion measure. As for other results in this development, the following result is reminiscent of a classical result in fluctuation theory of L\'evy processes, see e.g. exercise 5 in Chapter VI in \cite{bertoin}, but as it includes the information about the modulator there is no direct way to derive it from the classical result. 

\begin{proposition}\label{Npotential}
For $x\in\mathbb{R}^d\backslash\{0\}$, and continuous  $g:\mathbb{R}^d\mapsto\mathbb{R}$ whose support is compactly embedded in the exterior of the ball of radius $|x|$,
\begin{align*}
\mathbb{N}_{\arg(x)}\left(\int_0^{\zeta}g(|x|{\rm e}^{\epsilon(u)}\Theta^\epsilon(u)){\rm d} u\right)
%& = 2^{-\alpha}\pi^{-d/2}\frac{\Gamma((d - \alpha)/2)}{\Gamma(\alpha/2)}
%\int_{|x|<|z|}\frac{(|z|^2-|x|^2)^{\alpha/2}}{|z|^\alpha |x-z|^{d}}g(z)
%\d z\\
&= 2^{-\alpha}
\frac{\Gamma((d - \alpha)/2)^2}{\Gamma(d/2)^2}\int_{|x|<|z|} g(z) U^+_x(\d z)
\end{align*}
\end{proposition}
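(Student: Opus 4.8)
The plan is to mimic the scaling argument used in Proposition~\ref{Nzeta}, but now applied to the occupation-measure identity of Lemma~\ref{resolventin} instead of the first-entrance law \eqref{BGR}. Concretely, fix $|x|>r>0$ and a continuous $g$ whose support is compactly embedded in the exterior of the ball of radius $|x|$ (in particular, in the exterior of the ball of radius $r$ once $r$ is close enough to $|x|$). First I would decompose the pre-$\tau^\oplus_r$ occupation functional $\mathbb{E}_x[\int_0^{\tau^\oplus_r} g(X_s)\d s]$ by conditioning on the point of closest reach $X_{\texttt{G}(\infty)}$ strictly before $\tau^\oplus_r$ and then applying the exit formula \eqref{exitsystem1}; exactly as in \eqref{two-law}, this yields
\[
\mathbb{E}_x\Big[\int_0^{\tau^\oplus_r} g(X_s)\d s\Big]
= \int_{r<|z|<|x|} U^-_x(\d z)\,\mathbb{N}_{\arg(z)}\Big(\int_0^{\zeta} g(|z|{\rm e}^{\epsilon(u)}\Theta^\epsilon(u))\d u\Big),
\]
where one checks, as in the proof of Proposition~\ref{Nzeta}, that the inner excursion functional is finite (because $g$ is bounded with support bounded away from the sphere of radius $1$ on the excursion scale) and continuous in $z$ via the rotation identity \eqref{cts} and dominated convergence.

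Next I would divide both sides by $\mathbb{P}_x(\tau^\oplus_r=\infty)=\int_{r<|z|\le|x|}U^-_x(\d z)$ and let $r\uparrow|x|$. By Lemma~\ref{Lemma1} (applied to the continuous bounded function $z\mapsto\mathbb{N}_{\arg(z)}(\int_0^\zeta g(|z|{\rm e}^{\epsilon(u)}\Theta^\epsilon(u))\d u)$), the ratio on the right converges to $\mathbb{N}_{\arg(x)}(\int_0^\zeta g(|x|{\rm e}^{\epsilon(u)}\Theta^\epsilon(u))\d u)$. So it remains to compute the left-hand limit
\[
\lim_{r\uparrow|x|}\frac{\mathbb{E}_x[\int_0^{\tau^\oplus_r} g(X_s)\d s]}{\mathbb{P}_x(\tau^\oplus_r=\infty)}.
\]
For the numerator I would substitute the explicit resolvent density from Lemma~\ref{resolventin}, namely $\mathbb{E}_x[\int_0^{\tau^\oplus_r} g(X_s)\d s]=\int_{|z|>r}h^\oplus_r(x,z)g(z)\d z$, and for the denominator I would use \eqref{BGR2} (equivalently \eqref{noenter}). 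The key observation is that as $r\uparrow|x|$ the quantity $\zeta^\oplus_r(x,z)=(|x|^2-r^2)(|z|^2-r^2)/r^2|x-z|^2\to 0$ for each fixed $z$ in the support of $g$, so $\int_0^{\zeta^\oplus_r(x,z)}(u+1)^{-d/2}u^{\alpha/2-1}\d u\sim \frac{2}{\alpha}\zeta^\oplus_r(x,z)^{\alpha/2}$; likewise $\int_0^{(|x|^2/r^2)-1}(u+1)^{-d/2}u^{\alpha/2-1}\d u\sim\frac{2}{\alpha}((|x|^2-r^2)/r^2)^{\alpha/2}$. Hence both numerator and denominator carry the same vanishing factor $(|x|^2-r^2)^{\alpha/2}$, and after cancelling it the ratio converges (by dominated convergence on the $z$-integral, the domination coming from $g$'s compact support away from $|z|=|x|$) to
\[
2^{-\alpha}\pi^{-d/2}\frac{\Gamma(d/2)}{\Gamma(\alpha/2)^2}\cdot\frac{\Gamma((d-\alpha)/2)\Gamma(\alpha/2)}{\Gamma(d/2)}\int_{|z|>|x|} g(z)\,|x-z|^{\alpha-d}\frac{(|z|^2-|x|^2)^{\alpha/2}}{|x|^\alpha}\,|x|^{\alpha}\,\d z,
\]
where I have used $|x|^{2\cdot\alpha/2}$ coming from the ratio of the $\zeta$-asymptotics (the $|x|^{-\alpha}$ from the denominator's $r^\alpha$ factor cancels against the numerator's). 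Recognising the remaining kernel against \eqref{dualpotential}, i.e. $U^+_x(\d z)=\pi^{-d/2}\frac{\Gamma(d/2)^2}{\Gamma((d-\alpha)/2)\Gamma(\alpha/2)}(|z|^2-|x|^2)^{\alpha/2}|x-z|^{-d}|z|^{-\alpha}\d z$, collapses the constants to exactly $2^{-\alpha}\Gamma((d-\alpha)/2)^2/\Gamma(d/2)^2$ and gives the claimed identity. Finally I would remove the auxiliary constraint $g\ge 0$ and extend from $g$ supported away from $|x|$ to general $g$ compactly supported in the exterior of the ball of radius $|x|$ by the usual truncation/monotone-class argument, noting that $U^+_x$ assigns no mass to the sphere $\{|z|=|x|\}$.

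\textbf{Main obstacle.} The delicate point is the interchange of the limit $r\uparrow|x|$ with the $z$-integral in the numerator: one must control $h^\oplus_r(x,z)/(|x|^2-r^2)^{\alpha/2}$ uniformly in $r$ near $|x|$ on the support of $g$. Because $g$ is compactly supported in the \emph{open} exterior of the ball of radius $|x|$, both $|x-z|$ and $|z|^2-r^2$ stay bounded away from zero there, so $\zeta^\oplus_r(x,z)$ stays bounded and the incomplete-beta integral is comparable to $\zeta^\oplus_r(x,z)^{\alpha/2}$ with constants uniform in $(r,z)$; this provides an integrable dominating function and legitimises dominated convergence. The only other subtlety is checking finiteness and $z$-continuity of the inner excursion functional at the start (so that Lemma~\ref{Lemma1} applies), which goes through exactly as in the proof of Proposition~\ref{Nzeta} using the rotation representation \eqref{cts} and the explicit Lévy measure $\nu$ from \eqref{nu}.
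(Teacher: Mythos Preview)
Your overall strategy is exactly the paper's: decompose the pre-$\tau^\oplus_r$ occupation functional over excursions via the exit formula, divide by $\mathbb{P}_x(\tau^\oplus_r=\infty)$, apply Lemma~\ref{Lemma1}, and evaluate the resulting ratio with Lemma~\ref{resolventin} and \eqref{BGR2}. However, there is a genuine gap in the first step.

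The excursion theory in Section~\ref{RET} is set up for the MAP $(\xi,\Theta)$, so the excursion clock $u$ in $(\epsilon(u),\Theta^\epsilon(u))$ runs in \emph{MAP time}, not in $X$-time. The Lamperti--Kiu transform \eqref{eq:lamperti_kiu} gives $\d t = {\rm e}^{\alpha\xi_s}\,\d s$ (this is \eqref{changeofvariableXtoMAP} in the paper), so the correct decomposition---which is precisely \eqref{occupation}---reads
\[
\mathbb{E}_x\!\left[\int_0^{\tau^\oplus_r} g(X_t)\,\d t\right]
=\int_{r<|z|<|x|}U^-_x(\d z)\,\mathbb{N}_{\arg(z)}\!\left(\int_0^\zeta g(|z|{\rm e}^{\epsilon(u)}\Theta^\epsilon(u))\,(|z|{\rm e}^{\epsilon(u)})^{\alpha}\,\d u\right),
\]
with the Jacobian factor $(|z|{\rm e}^{\epsilon(u)})^\alpha$ that your display omits. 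Consequently, after taking $r\uparrow|x|$ the limit you compute equals $\mathbb{N}_{\arg(x)}\big(\int_0^\zeta g(|x|{\rm e}^{\epsilon}\Theta^\epsilon)(|x|{\rm e}^{\epsilon})^\alpha\,\d u\big)$, not $\mathbb{N}_{\arg(x)}\big(\int_0^\zeta g(|x|{\rm e}^{\epsilon}\Theta^\epsilon)\,\d u\big)$. Correspondingly, the explicit limit
\[
2^{-\alpha}\pi^{-d/2}\frac{\Gamma((d-\alpha)/2)}{\Gamma(\alpha/2)}\int_{|z|>|x|} g(z)\,|x-z|^{-d}(|z|^2-|x|^2)^{\alpha/2}\,\d z
\]
(note: your asymptotics of $\zeta^\oplus_r(x,z)^{\alpha/2}$ should also contribute $|x-z|^{-\alpha}$, so the power of $|x-z|$ is $-d$, not $\alpha-d$) matches $2^{-\alpha}\frac{\Gamma((d-\alpha)/2)^2}{\Gamma(d/2)^2}\int g(z)\,|z|^{\alpha}\,U^+_x(\d z)$, with an extra $|z|^\alpha$ against \eqref{dualpotential}. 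The paper then completes the proof by replacing $g$ by $g(\cdot)|\cdot|^{-\alpha}$; you need the same final substitution, and for that you need to have carried the Jacobian from the start. Once you insert the $(|z|{\rm e}^{\epsilon(u)})^\alpha$ factor and make that last substitution, your argument coincides with the paper's.
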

\begin{proof} Fix $0<r<|x|$.
Recall from the Lamperti--Kiu representation \eqref{eq:lamperti_kiu} that $X_t = \exp\{\xi_{\varphi(t)}\}\Theta(\varphi(t))$, $t\geq 0$, where $\int_0^{\varphi(t)}\exp\{\alpha \xi_u\}\d u = t$.
In particular, this implies that, if we write $s  = \varphi(t)$, then 
\begin{equation}
{\rm e}^{\alpha\xi_s} {\d}s= {\d t}, \qquad t>0,
\label{changeofvariableXtoMAP}
\end{equation}
Splitting the occupation over individual excursions, we have with the help of \eqref{exitsystem1} that 
\begin{align}
&\mathbb{E}_x\left[\int_0^{\tau^\oplus_r}g(X_t)\d t \right] \notag \\
&\hspace{1cm}=\mathbb{E}_x\left[\int_0^{\infty}\mathbf{1}{({\rm e}^{\underline{\xi}_s} > r)}g({\rm e}^{\xi_s}\Theta_s){\rm e}^{\alpha \xi_s}\d s\right] \notag \\
&\hspace{1cm}= 
\int_{r<|z|<|x|}U^-_x(\d z)\mathbb{N}_{\arg(z)}\left(\int_0^\zeta g(|z|{\rm e}^{\epsilon(s)} \Theta^\epsilon(s))(|z|{\rm e}^{\epsilon(s)})^\alpha\d s\right).
\label{occupation}
\end{align}
Note that the left-hand side is necessarily finite as it can be upper bounded by $\mathbb{E}_x\left[\int_0^\infty g(X_t)\d t \right]$, which is known to be finite for the given assumptions on $g$.
Straightforward  arguments, similar to those presented around \eqref{cts}, tell us that for continuous $g$ with compact support that is compactly embeded  in the exterior of ball of radius $|x|$, we have that, for $r<|z|<|x|$, 
\[
\mathbb{N}_{\arg(z)}\left(\int_0^\zeta g(|z|{\rm e}^{\epsilon(s)} \Theta^\epsilon(s)){\rm e}^{\alpha \epsilon(s)}\d s\right)
=\int_0^\infty 
\mathbb{N}_{\arg(z)}\left( g(|z|{\rm e}^{\epsilon(s)} \Theta^\epsilon(s)){\rm e}^{\alpha \epsilon(s)}; \, s<\zeta \right)
\d s
\]
is a continuous function.  Accordingly we can again use Lemma \ref{Lemma1} and Theorem \ref{BGRtheorem} and write, for $x\in\mathbb{R}^d$, 
\begin{align*}
&
\mathbb{N}_{\arg(x)}\left(\int_0^\zeta g(|x|{\rm e}^{\epsilon(s)} \Theta^\epsilon(s))(|x|{\rm e}^{ \epsilon(s)})^\alpha\right)\\
&=\lim_{r\uparrow|x|}\frac{\int_{r<|z|<|x|}U^-_x(\d z)
\mathbb{N}_{\arg(z)}\left(\int_0^\zeta g(|z|{\rm e}^{\epsilon(s)} \Theta^\epsilon(s))(|z|{\rm e}^{ \epsilon(s)})^\alpha\right)}{\int_{r<|z|\leq |x|}U^-_x(\d z)}\\
&=\frac{\mathbb{E}_x\left[\int_0^{\tau^\oplus_r}g(X_s)\d s \right] }{\mathbb{P}_x(\tau^\oplus_r = \infty)}\\
&=2^{-\alpha}\pi^{-d/2}
\frac{\Gamma((d - \alpha)/2)}{\Gamma(\alpha/2)}
\lim_{r\uparrow|x|}\frac{\int_{|x|<|z|}\d z \, \mathbf{1}{(r<|z|)}g(z)
|x-z|^{\alpha - d}
 \int_0^{\zeta^\oplus_r(x,z)}  (u+1)^{-d/2}u^{\alpha/2-1}\d u}{\int_0^{(|x|^2-r^2)/r^2} (u+1)^{-d/2}u^{\alpha/2-1}\d u}\\
&=2^{-\alpha}\pi^{-d/2}\frac{\Gamma((d - \alpha)/2)}{\Gamma(\alpha/2)}
\int_{|x|<|z|}\d z \, g(z) |x-z|^{- d}(|z|^2-|x|^2)^{\alpha/2},
\end{align*}
where in the final equality we have used dominated convergence (in particular the assumption on the support of $g$).
By inspection, we also note that the right-hand side above is equal to 
\[
2^{-\alpha}
\frac{\Gamma((d - \alpha)/2)^2}{\Gamma(d/2)^2}\int_{|x|<|z|} g(z)|z|^\alpha U^+_x(\d z).
\]
The proof is completed by replacing $g(x)$ by $g(x)|x|^{-\alpha}$. 
\end{proof}

\section{On $n$-tuple laws}\label{triplelaws}
We are now ready to prove Theorems \ref{tripleintheorem} \ref{double1} and \ref{double2} with the help of Section \ref{IntegrationN} and other identities. In essence, we can piece together the desired results using Maisonneuve's exit formula \eqref{exitsystem1} applied in the appropriate way, together with some of the identities established in previous section. 

\begin{proof}[Proof of Theorem \ref{tripleintheorem}]  (i) Appealing to the fact that the stable process $|X|$ does not creep downward and the L\'evy system compensation formula for the jumps of $X$, we have, on the one hand,%a calculation similar in spirit to \eqref{two-law}, using \eqref{exitsystem1}, tells us that
\begin{align}
\mathbb{E}_x[f(X_{{\texttt G}(\tau_r^\oplus)})g(X_{\tau^\oplus_r-})h(X_{\tau^\oplus_r}); \tau^\oplus_r<\infty] &=
\mathbb{E}_x\left[\int_0^{\tau^\oplus_r}f(X_{{\texttt G}(t)})g(X_t)k(X_t){\rm d}t\right],
\label{Levysystem}
\end{align}
where continuous $R$-valued functions $f$, $g$, $h$ are such that the first two are compactly supported in $\{z\in\mathbb{R}^d: |z|>r\}$ and the third is compactly supported in the open ball of radius $r$ and
\[
k(y) = \int_{|y+w|<r}\Pi(\d w)h(y+w).
\]
On the other hand, a calculation similar in spirit to \eqref{occupation}, using \eqref{exitsystem1}, followed by an application of Proposition \ref{Npotential}, tells us that
\begin{align*}
&\mathbb{E}_x\left[\int_0^{\tau^\oplus_r}f(X_{{\texttt G}(t)})g(X_t)k(X_t)\right]\notag\\
&\hspace{1cm}=\int_{r<|z|<|x|}U^-_x(\d z)f(z)
\mathbb{N}_{\arg(z)}\left(\int_0^\zeta g(|z|{\rm e}^{\epsilon(s)} \Theta^\epsilon(s))k(|z|{\rm e}^{\epsilon(s)} \Theta^\epsilon(s))(|z|{\rm e}^{\epsilon(s)})^\alpha\d s\right)\\
&\hspace{1cm}=2^{-\alpha}
\frac{\Gamma((d - \alpha)/2)^2}{\Gamma(d/2)^2}\int_{r<|z|<|x|}U^-_x(\d z)f(z)\int_{|z|<|y|}  U^+_z(\d y)g(y)k(y) |y|^\alpha.
\end{align*}
Putting the pieces together, we get 
\begin{align*}
&\mathbb{E}_x[f(X_{{\texttt G}(\tau_r^\oplus)})g(X_{\tau^\oplus_r-})h(X_{\tau^\oplus_r}); \tau^\oplus_r<\infty] \\
&=2^{-\alpha}
\frac{\Gamma((d - \alpha)/2)^2}{\Gamma(d/2)^2}\int_{r<|z|<|x|}\int_{|z|<|y|} \int_{|w-y|<r}
U^-_x(\d z)
U^+_z(\d y)
\Pi(\d w)
f(z) g(y) |y|^\alpha      h(y+w)\notag\\
%&=\frac{\Gamma((d - \alpha)/2)}{2^{\alpha}\pi^{d/2}\Gamma(\alpha/2)}
%\int_{r<|z|<|x|}\int_{|z|<|y|} \int_{|w-y|<r}
%U^-_x(\d z)
%\frac{||y|^2-|z|^2|^{\alpha/2}}
%{|y|^\alpha|z-y|^{d} }
%\Pi(\d w)
%f(z) g(y) |y|^\alpha      h(y+w)\\
%&=\frac{\Gamma(d/2)^2}{2^{\alpha}\pi^{d}\Gamma(\alpha/2)^2}
%\int_{r<|z|<|x|}\int_{|z|<|y|} \int_{|w-y|<r}
% \frac{||z|^2-|x|^2|^{\alpha/2}||y|^2-|z|^2|^{\alpha/2}}
%{|z|^\alpha|z-x|^{d}|z-y|^{d}}
%\Pi(\d w)
%f(z) g(y)     h(y+w)\\
&=c_{\alpha,d}
\int_{r<|z|<|x|}\int_{|z|<|y|} \int_{|w-y|<r}
 \frac{||z|^2-|x|^2|^{\alpha/2}||y|^2-|z|^2|^{\alpha/2}}
{|z|^\alpha|z-x|^{d}|z-y|^{d}|w|^{\alpha+d}} \d y\,\d z\, \d w
f(z) g(y)     h(y+w)\\
&=c_{\alpha,d}
\int_{r<|z|<|x|}\int_{|z|<|y|} \int_{|v|<r}
 \frac{||z|^2-|x|^2|^{\alpha/2}||y|^2-|z|^2|^{\alpha/2}}
{|z|^\alpha|z-x|^{d}|z-y|^{d}|v-y|^{\alpha+d}} 
\d y\,\d z\, \d v
f(z) g(y)     h(v)
\end{align*}
where
\[
c_{\alpha,d}
 = \frac{\Gamma(({d+\alpha})/{2})}{|\Gamma(-{\alpha}/{2})|}\frac{\Gamma(d/2)^2}{\pi^{3d/2}\Gamma(\alpha/2)^2}.
%\label{kappa}
\]
 This is equivalent to the statement of part (i) of the theorem.

\medskip

(ii) This is a straightforward application of the Riesz--Bogdan--\.Zak transformation, with computations in the style of those used to prove Lemma \ref{resolventin}. For the sake of brevity, the proof is left as an exercise for the reader.
\end{proof}

\begin{proof}[Proof of Corollary \ref{double1}] As above, we only prove (i) as part (ii) can be derived appealing to the Riesz--Bogdan--\.Zak transformation.

 From \eqref{two-law}, \eqref{U-} and Proposition \ref{Nzeta}, more specifically \eqref{morespecifically},  we have that 
for bounded measurable functions $f,g$ on $\mathbb{R}^d$, 
\begin{align*}
&\mathbb{E}_x[g(X_{{\texttt G}(\tau_1^\oplus)})f(X_{\tau^\oplus_1}); \tau^\oplus_1<\infty]\notag\\
& =  \int_{1<|z|<|x|}U^-_x(\d z)\mathbb{N}_{\arg(z)}(f(|z|{e}^{\epsilon(\zeta)}\Theta^\epsilon(\zeta))\mathbf{1}(|z|{e}^{\epsilon(\zeta)}<1); \zeta<\infty)g(z)\notag\\
&= \frac{\Gamma(d/2)^2\sin(\pi\alpha/2)}{\pi^{d}|\Gamma(-\alpha/2)|\Gamma(\alpha/2)} \int_{1<|z|<|x|}
\int_{|v|<1}
\frac{||z|^2-|x|^2|^{\alpha/2}}{||z|^2-|v|^2|^{\alpha/2}|z-v|^{d}|z-x|^{d}}f(v)g(z)\, \d z\,{\rm d} v.
\end{align*}
This gives the desired result when $r=1$.
As usual, we use scaling to convert the above conclusion to the setting of first passage into a ball of radius $r>0$.
\end{proof}

\begin{proof}[Proof of Corollary \ref{double2}] As with the previous proof, we only deal with (i) and the case that $r=1$ for the same reasons. 
Setting $f\equiv 1$ in \eqref{Levysystem}, we see with the help of Lemma \ref{resolventin} and \eqref{jumpmeasure} that 
\begin{align*}
&\mathbb{E}_x[g(X_{\tau^\oplus_1})h(X_{\tau^\oplus_1}); \tau^\oplus_1<\infty] \\
&=
\mathbb{E}_x\left[\int_0^{\tau^\oplus_1}g(X_t)k(X_t)\right]\\
&=\frac{2^{\alpha}\Gamma(({d+\alpha})/{2})}{\pi^{d/2}|\Gamma(-{\alpha}/{2})|}\int_{|y|>1}g(y) \int_{|y+w|<1}\frac{1}{|w|^{\alpha + d}}\d w \, h(y+w) h_1^\oplus(x,y)\d y \\
&=\frac{2^{\alpha}\Gamma(({d+\alpha})/{2})}{\pi^{d/2}|\Gamma(-{\alpha}/{2})|} \int_{|y|>1} \int_{|v|<1}g(y)h(v)\frac{1}{|v-y|^{\alpha + d}}  h_1^\oplus(x,y)\, \d v \,\d y
\end{align*}
where the function $k(\cdot)$ is as before. The result now follows.
\end{proof}

\section{Deep factorisation of the stable process}\label{deepproof}

The manipulations we have made in  Section \ref{IntegrationN}, in particular in Proposition \ref{Npotential}, are precisely what we need to demonstrate the Wiener--Hopf factorisation. Recall that, for Theorem \ref{WHF}, we defined 
\[
{\bf R}_z[f](\theta) = \mathbf{E}_{0,\theta}\left[\int_0^\infty {\rm e}^{-z \xi_t}f(\Theta_t)\d t\right]
%= \mathbb{E}_{\theta}\left[\int_0^\infty |X_s|^{2\alpha-z} f(\arg(X_s))\d s\right]
, \qquad \theta\in\mathbb{S}_{d-1}
, z\in\mathbb{C}.
\]
Moreover, define
\[
\hat{\boldsymbol\rho}_z[f](\theta)  = \mathbf{E}_{0,\theta}\left[\int_0^\infty {\rm e}^{- z H^-_t}f(\Theta^-_t)\d t\right] =\int_{|y|<1}|y|^{z}f(\arg(y)) U^-_\theta (\d y)
\]
and 
\[
\boldsymbol{\rho}_z[f](\theta)  = \mathbf{E}_{0,\theta}\left[\int_0^\infty {\rm e}^{- z H^+_t}f(\Theta^+_t)\d t\right] =\int_{|y|>1}|y|^{-z}f(\arg(y)) U^+_\theta (\d y)
\]
for bounded measurable $f: \mathbb{S}_{d-1}\mapsto[0,\infty)$, whenever the integrals make sense. 
We note that the expression for $\boldsymbol{\rho}_z[f](\theta)$ as given in the statement of Theorem \ref{WHF} is clear given \eqref{dualpotential}. Moreover, from e.g. Section 2 of \cite{BGR}, it is known that the free potential measure of a stable process issued from $x\in\mathbb{R}^d$ has density given by 
\[
u(x, y) =\frac{\Gamma((d-\alpha)/2)}{2^\alpha\pi^{d/2}\Gamma(\alpha/2)}|y-x|^{\alpha-d}, \qquad y\in\mathbb{R}^d.
\]
Accordingly, taking account of \eqref{changeofvariableXtoMAP}, it is straightforward to compute
\begin{align*}
{\bf R}_z[f](\theta)& = \mathbf{E}_{0,\theta}\left[\int_0^\infty {\rm e}^{-(z+\alpha) \xi_s}f(\Theta_s){\rm e}^{\alpha \xi_s}\d s\right]\\
&=\mathbb{E}_{\theta}\left[\int_0^\infty |X_t|^{-(\alpha+z)}f(\arg(X_t))\d t\right]\\
&=\int_{\mathbb{R}^d} f(\arg(y)) \frac{u(\theta, y)}{|y-\theta|^{\alpha+z}}\, \d y, \qquad \Re(z)\geq 0,
\end{align*}
where we have used stationary and independent increments in the final equality. Note also that this agrees with the expression for ${\bf R}_z[f](\theta)$ in the statement of Theorem \ref{WHF}.

\begin{proof}[Proof of Theorem \ref{WHF}] 
 From the second and third equalities of  equation \eqref{occupation} (taking $r\to0$) and Proposition \ref{Npotential} gives us 
\begin{align}
{\bf R}_{-{\rm i}\lambda}[f](\theta)&=\int_{|w|<1}U^-_\theta(\d w)\mathbb{N}_{\arg(w)}\left(\int_0^\zeta (|w|{\rm e}^{\epsilon(s)})^{{\rm i }\lambda} f(\Theta^\epsilon(s))\right)\notag\\
&=2^{-\alpha}
\frac{\Gamma((d - \alpha)/2)^2}{\Gamma(d/2)^2}\int_{|w|<1}U^-_\theta(\d w)%|z|^{{\rm i}\lambda}
\int_{|w|<|y|} f(\arg(y))|y|^{ {\rm i}\lambda} U^+_w(\d y).
\label{WHF1}
\end{align}
Note that, by conditional stationary and independent increments, for any $w\in\mathbb{R}^d\backslash\{0\}$,
\begin{align*}
\int_{|w|<|y|}|y|^{ {\rm i}\lambda}f(\arg(y)) U^+_w (\d y)&=
\mathbf{E}_{\log|w|,\arg(w)}\left[\int_0^\infty {\rm e}^{ {\rm i}\lambda H^+_t}f(\Theta^+_t)\d t\right] \\
&=|w|^{ {\rm i}\lambda}\mathbf{E}_{0,\arg(w)}\left[\int_0^\infty {\rm e}^{ {\rm i}\lambda H^+_t}f(\Theta^+_t)\d t\right] \\
&=|w|^{ {\rm i}\lambda}\int_{1<|y|}|y|^{ {\rm i}\lambda}f(\arg(y)) U^+_{\arg(w)} (\d y).
\end{align*}
Hence back in \eqref{WHF1}, we have 
\begin{align*}
{\bf R}_{-{\rm i}\lambda}[f](\theta)&=
2^{-\alpha}
\frac{\Gamma((d - \alpha)/2)^2}{\Gamma(d/2)^2}\hat{\boldsymbol\rho}_{{\rm i}\lambda}[\boldsymbol{\rho}_{-{\rm i}\lambda}[f]](\theta), \qquad \lambda \in\mathbb{R}.
\end{align*}
Finally we note from \eqref{dualpotential} that, making the change of variables $y= Kw$, so that $\arg(y) = \arg(w)$, $|y| = 1/|w|$ and $\d y = |w|^{-2d}\d w$ and, for  $\theta\in\mathbb{S}_{d-1}$, $|\theta - Kw| = |\theta-w|/|w|$, we have 
\begin{align*}
\hat{\boldsymbol\rho}_{{\rm i}\lambda}[f](\theta) &=\pi^{-d/2}\frac{\Gamma(d/2)^2}{\Gamma((d - \alpha)/2)\Gamma(\alpha/2)} \int_{|y|>1}|y|^{{\rm i}\lambda}f(\arg(y)) 
\frac{||y|^2-1|^{\alpha/2}}
{|y|^\alpha|\theta-y|^{d} }\d y\\
&=\pi^{-d/2}\frac{\Gamma(d/2)^2}{\Gamma((d - \alpha)/2)\Gamma(\alpha/2)} \int_{1<|w|}|w|^{-{\rm i}\lambda+\alpha -d}f(\arg(w)) 
\frac{|1 - |w|^2|^{\alpha/2}}
{|\theta-w|^{d} |w|^\alpha }\d w\\
&=\boldsymbol{\rho}_{{\rm i}\lambda+(\alpha - d)}[f](\theta), \qquad \lambda\in\mathbb{R},
\end{align*}
as required.
\end{proof}

\section{Proof of Theorem \ref{stdist}}\label{8}

Recall from the description of the Riesz--Bogdan--\.Zak transform that $(\xi,\Theta)$ under the change of measure in \eqref{COMMAP} is equal in law to $(-\xi,\Theta)$. Accordingly,
we have for $q>0$, $x\in\mathbb{R}^d\backslash\{0\}$ and bounded measurable $g$ whose support is compactly embedded in the ball of unit radius,
\begin{align*}
&\mathbf{E}_{-\log |x|, \arg(x)}[g({\rm e}^{ -(\overline\xi_{\mathbf{e}_q} -\xi_{\mathbf{e}_q})}\Theta_{\mathbf{e}_q} )]\notag\\
&=
\mathbf{E}_{\log |x|, \arg(x)}\left[ \frac{{\rm e}^{(\alpha-d)\xi_{\mathbf{e}_q}}}{|x|^{\alpha-d}}g({\rm e}^{ -(\xi_{\mathbf{e}_q} -\underline\xi_{\mathbf{e}_q})}\Theta_{\mathbf{e}_q} )\right]\notag\\
&=|x|^{d-\alpha}\mathbf{E}_{\log |x|, \arg(x)}\left[\sum_{\texttt{g}\in G} \mathbf{1}(\zeta_{\texttt{g}'} <\mathbf{e}^{\texttt{g}'}_q,\, \forall G\ni\texttt{g}'<\texttt{g}) {\rm e}^{(\alpha -d){\xi}_{\texttt g}}{\rm e}^{(\alpha -d)\epsilon_\texttt{g}(\mathbf{e}_q^{\texttt{g}})}
g({\rm e^{-\epsilon_\texttt{g}(\mathbf{e}^\texttt{g}_q) }} \Theta^\epsilon_\texttt{g}(\mathbf{e}^\texttt{g}_q))\mathbf{1}(\mathbf{e}_q^\texttt{g} <\zeta_\texttt{g})\right]\notag\\
&=|x|^{d-\alpha}\mathbf{E}_{\log |x|, \arg(x)}\left[\int_0^\infty 
{\rm e}^{-q t} {\rm e}^{(\alpha - d)\xi_t}\mathbb{N}_{\Theta_t}\left({\rm e}^{(\alpha-d)\epsilon(\mathbf{e}_q)}g({\rm e}^{-\epsilon(\mathbf{e}_q)}\Theta(\mathbf{e}_q)); \mathbf{e}_q <\zeta\right)
\d L_t\right]\notag\\
&=|x|^{d-\alpha}\mathbf{E}_{\log |x|, \arg(x)}\left[\int_0^\infty 
{\rm e}^{-q \ell^{-1}_s}{\rm e}^{-(\alpha-d)H^-_s}\mathbb{N}_{\Theta^-_s}\left({\rm e}^{(\alpha-d)\epsilon(\mathbf{e}_q)}g({\rm e}^{-\epsilon(\mathbf{e}_q)}\Theta(\mathbf{e}_q)); \mathbf{e}_q <\zeta\right)
\d s\right],
\end{align*}
where, for each $\texttt{g}\in G$, $\mathbf{e}^{\texttt{g}}_q$ are additional marks on the associated excursion which are independent and exponentially distributed with rate $q$.
Hence, if we define 
\[
U^{(q),-}_x(\d y) = \int_0^\infty \d s\, \mathbf{E}_{\log|x|, \arg(x)}\left[{\rm e}^{-q \ell^{-1}_t}; \, {\rm e}^{-H^-_s}\Theta^-_s\in \d y, \, s<\ell_\infty\right], \qquad |y|<|x|.
\] 
then
\begin{align}
&\mathbf{E}_{-\log |x|, \arg(x)}[g({\rm e}^{ -(\overline\xi_{\mathbf{e}_q} -\xi_{\mathbf{e}_q})}\Theta_{\mathbf{e}_q} )]\notag\\
&= \int_{(0,\infty)}\int_{|y|<|x|}qU^{(q),-}_x(\d y)\frac{|y|^{\alpha -d}}{|x|^{\alpha - d}} \mathbb{N}_{\arg(y)}\left(\int_0^\zeta {\rm e}^{-q t}{\rm e}^{(\alpha-d)\epsilon(t)}g({\rm e}^{-\epsilon(t)}\Theta(t))\d t\right)
\label{limitq}
\end{align}

Recall that $\ell^{-1}$ is a subordinator (without reference to the accompanying modulation $\Theta^+$), suppose we denote its Laplace exponent by $\Lambda^+(q): = - \log\mathbf{E}_{0,\theta}\left[\exp\{-q L^{-1}_1\}\right]$, $q\geq 0$, where $\theta\in\mathbb{S}_{d-1}$ is unimportant in the definition.
Appealing again to the Riesz--Bogdan--\.Zak transform again, we also note that for a bounded and measurable function $h$ on $\mathbb{S}_{d-1}$, using obvious notation
\begin{align}
\int_{|y|<|x|}\frac{|y|^{\alpha -d}}{|x|^{\alpha - d}}qU^{(q),-}_x(\d y) h(\arg(y))&= q\int_0^\infty \d s\, \mathbf{E}_{-\log|x|, \arg(x)}\left[{\rm e}^{-q L^{-1}_t} h( \Theta^+_s)\right]\notag\\
&=\frac{q}{\Lambda^+(q)} \int_0^\infty \d s\,\Lambda^+(q){\rm e}^{-\Lambda^+(q)s} \mathbf{E}^{(q)}_{-\log|x|, \arg(x)}\left[ h( \Theta^+_s)\right]\notag\\
&=\frac{q}{\Lambda^+(q)} \mathbf{E}^{(q)}_{-\log|x|, \arg(x)}\left[ h\left(\Theta^+_{\mathbf{e}_{\Lambda^+(q)}} \right)\right]
\label{takeqto0}
\end{align}
where 
$
\mathbf{P}^{(q)} _{-\log |x|, \arg(x)}
$
appears as the result of a change of measure with martingale density $\exp\{-qL^{-1}_s +\Lambda^+(q)s\}$, $s\geq0$, and $\Lambda^+(q)$ is the Laplace exponent of the subordinator $L^{-1}$ and $\mathbf{e}_{\Lambda^+(q)}$ is an independent exponential random variable with parameter $\Lambda^+(q)$.

Next, we want to take $q\downarrow0$ in \eqref{limitq}. To this end, we start by remarking that, as $L$ is a local time for the L\'evy process $\xi$ (without reference to its modulation), it is known from classical Wiener--Hopf factorisation theory that, up to a multiplicative constant, $c>0$, which depends on the normalisation of the local time $L$,   $q = c\Lambda^+(q)\Lambda^-(q)$, where $\Lambda^-(q)$ is the Laplace exponent of the local time at the infimum $\ell$; see for example equation (3) in Chapter VI of \cite{bertoin}. 
%We are at liberty to fix our normalisation of the local time $L$ here so that the aforesaid multiplicative constant is equal to uniity. 
On account of the fact that $X$ is transient, we know that $\ell_\infty$ is exponentially distributed and the reader may recall that we earlier normalised our choice of $\ell$ such that its rate, $\Lambda^-(0)=1$. This implies, in turn, that $\lim_{q\downarrow0}q/\Lambda^+(q) =c$.

Appealing to isotropy, the recurrence of $\{0\}\times\mathbb{S}_{d-1}$ for $(\overline\xi - \xi, \Theta)$ and weak convergence back in \eqref{takeqto0} as we take the limit with $q\downarrow0$, to find that 
\[
\lim_{q\to0}\int_{|y|<|x|}\frac{|y|^{\alpha -d}}{|x|^{\alpha - d}}qU^{(q),-}_x(\d y) h(\arg(y))=c\int_{\mathbb{S}_{d-1}}\sigma_1(\d \phi)h(\phi),
\]
where we recall that $\sigma_1(\d \phi)$ is the surface measure on $\mathbb{S}_{d-1}$ normalised to have unit mass.
Hence, back in \eqref{limitq} we have with the help of Proposition \ref{Npotential} and \eqref{dualpotential},
\begin{align}
&\lim_{q\downarrow0}\mathbf{E}_{-\log |x|, \arg(x)}[g({\rm e}^{ -(\overline\xi_{\mathbf{e}_q} -\xi_{\mathbf{e}_q})}\Theta_{\mathbf{e}_q} )] \notag\\
&= \int_{\mathbb{S}_{d-1}}\sigma_1(\d \phi) \mathbb{N}_{\phi}\left(\int_0^\zeta {\rm e}^{(\alpha-d)\epsilon(t)}g({\rm e}^{-\epsilon(t)}\Theta(t))\d t\right)\notag\\
&=c\pi^{-d/2}2^{-\alpha}\frac{\Gamma((d - \alpha)/2)}{\Gamma(\alpha/2)}
\int_{\mathbb{S}_{d-1}} \sigma_1(\d\phi)\int_{1<|z|} g(Kz) 
\frac{||z|^2-1|^{\alpha/2}}
{|z|^d|\phi-z|^{d} }\d z,
\label{almostthere}
\end{align}
where we recall that $Kz = z/|z|^2$.
Finally, we note that, using the Lamperti--Kiu transform and  \eqref{changeofvariableXtoMAP}, for bounded measurable $f$ and compactly embedded in $\mathbb{B}_d$,
\begin{align*}
f\left(
{X_t}/{M_t}
 \right)\d t
&= f\left(
{\rm e}^{-(\overline{\xi}_s-\xi_s)}\Theta_s
 \right){\rm e}^{\alpha \xi_s}\d s
\end{align*}
where $s= \varphi (t)$, suggesting that, for $y\in\mathbb{R}^d\backslash\{0\}$,  
\[
\lim_{t\to\infty}\mathbb{E}_y[ f\left(
{X_t}/{M_t}
 \right)] = \lim_{s\to\infty}\mathbf{E}_{\log|y|, \arg(y)}
 \left[f\left(
{\rm e}^{-(\overline{\xi}_s-\xi_s)}\Theta_s
 \right){\rm e}^{\alpha \xi_s}\right].
\]
In fact, we can justify this rigorously appealing to the  discussion at the bottom of p240 of \cite{Walsh}. Hence, putting this  together with \eqref{almostthere}, for $f$ and $x$ as before,  we conclude that,  
\begin{align}
\lim_{t\to\infty}\mathbb{E}_{Kx}[ f\left(
{X_t}/{M_t}
 \right)]&=\lim_{q\downarrow0}\mathbf{E}_{-\log |x|, \arg(x)}[f({\rm e}^{ -(\overline\xi_{\mathbf{e}_q} -\xi_{\mathbf{e}_q})}\Theta_{\mathbf{e}_q} ){\rm e}^{\alpha \xi_{\mathbf{e}_q} }] \notag\\
 &=c\pi^{-d/2}2^{-\alpha}\frac{\Gamma((d - \alpha)/2)}{\Gamma(\alpha/2)}
\int_{\mathbb{S}_{d-1}} \sigma_1(\d\phi)\int_{1<|z|} f(Kz) 
\frac{|Kz|^\alpha ||z|^2-1|^{\alpha/2}}
{|z|^d|\phi-z|^{d} }\d z\notag\\
&=c\pi^{-d/2}2^{-\alpha}\frac{\Gamma((d - \alpha)/2)}{\Gamma(\alpha/2)}
\int_{\mathbb{S}_{d-1}} \sigma_1(\d\phi)\int_{|w|<1} f(w) 
\frac{|1-|w|^2|^{\alpha/2}  }
{|\phi-w|^{d} }{\d w}
\label{c}
\end{align}
where we changed variables $w = Kz$, or equivalently $z = Kw$, and we  used \eqref{Kz-Kz}, that $|w| = 1/|z|$ and that $\d z = \d w/ |w|^{2d} $. 

In order to pin down the constant $c$, we need to ensure that, when $f\equiv1$, the integral on the right-hand side of \eqref{c} is identically equal to 1.
To do this, we recall a classical Poisson potential formula (see for example Theorem 4.3.1 in \cite{PortStone})
\begin{equation}
( 1-|w|^2)^{-1}= \int_{\mathbb{S}_{d-1}}|\phi-w|^{-d}
\sigma_1(\d \phi)\qquad |w|<1.
\label{provesNewton}
\end{equation}
Writing $\sigma_r(\d \theta)$, $\theta\in r\mathbb{S}_{d-1}$ for the uniform surface measure on $r\mathbb{S}_{d-1}$ normalised to have total mass equal to one, %, in other words, $\sigma_r(\d \theta) = 2\pi^{d/2}r^{d-1}\sigma_1(\d \theta)/\Gamma(d/2)$, 
it follows that 
\begin{align*}
\int_{\mathbb{S}_{d-1}} \sigma_1(\d\phi)\int_{|w|<1} 
\frac{|1-|w|^2|^{\alpha/2}  }
{|\phi-w|^{d} }{\d w}
&=\int_{|w|<1}|1-|w|^2|^{\frac{\alpha}{2}-1}\d w\\
&= \frac{2\pi^{d/2}}{\Gamma(d/2)}\int_0^1r^{d-1}\d r\int_{r\mathbb{S}_{d-1}}\sigma_r(\d \theta)(1-r^2)^{\frac{\alpha}{2}-1}\\
&= \frac{\pi^{d/2}}{\Gamma(d/2)}\int_0^1 y^{\frac{d}{2}-1} (1-y)^{\frac{\alpha}{2}-1}\d y\\
&=\pi^{d/2}\frac{\Gamma(\alpha/2)}{\Gamma((d+\alpha)/2)}.
\end{align*}
This forces us to take 
\[
c = 2^{\alpha}\frac{\Gamma((d+\alpha)/2)}{\Gamma((d - \alpha)/2)}
\]
and so,we have 
\begin{align*}
\lim_{t\to\infty}\mathbb{E}_{Kx}[ f\left(
{X_t}/{M_t}
 \right)]&=
\pi^{-d/2}\frac{\Gamma((d+\alpha)/2)}{\Gamma(\alpha/2)}
\int_{\mathbb{S}_{d-1}} \sigma_1(\d\phi)\int_{|w|<1} f(w) 
\frac{|1-|w|^2|^{\alpha/2}  }
{|\phi-w|^{d} }{\d w}.
 \end{align*}
as required. \hfill$\square$
%\end{proof}

%%%%%%%%%%%%%%%
%%%%%%%%%%%%%%%
%%%%%%%%%%%%%%%
%%%%%%%%%%%%%%%
%%%%%%%%%%%%%%%
%%%%%%%%%%%%%%%
%%%%%%%%%%%%%%%
%%%%%%%%%%%%%%%
\section*{Acknowledgements} Both authors would like to thank Ron Doney who pointed out the distributional interpretations in Remarks \ref{rem1} and \ref{stdist}.

\bibliography{references}{}
\bibliographystyle{plain}

\end{document}